\documentclass[12pt,leqno]{amsart}
\usepackage{amsmath,amssymb,amsfonts}
\usepackage[all]{xy}

\usepackage[T5,T1]{fontenc}
\usepackage{mathpazo}
\usepackage{hyperref}
\usepackage{appendix}
\usepackage{a4wide}

\theoremstyle{plain}
\newtheorem{thm}{Theorem}[section]
\newtheorem*{mainthm}{Main Theorem}
\newtheorem{lem}[thm]{Lemma}
\newtheorem{cor}[thm]{Corollary}
\newtheorem{prop}[thm]{Proposition}

\newtheorem{condition}[thm]{Condition}

\theoremstyle{definition}

\newtheorem{rmk}[thm]{Remark}

\makeatletter
\newcommand{\neutralize}[1]{\expandafter\let\csname c@#1\endcsname\count@}
\makeatother

\newcommand{\surj}{\twoheadrightarrow}

\newcommand{\im}{{\rm im}}

\newcommand{\Gal}{{\rm Gal}}


\newcommand{\sI}{{\mathcal I}}


\newcommand{\C}{{\mathbb C}}

\newcommand{\F}{{\mathbb F}}

\newcommand{\N}{{\mathbb N}}

\newcommand{\Q}{{\mathbb Q}}

\newcommand{\U}{{\mathbb U}}

\newcommand{\Z}{{\mathbb Z}}

\newcommand{\res}{{\rm res \hspace{.1ex} }}

\def\NDT{{\fontencoding{T5}\selectfont Nguy\~ \ecircumflex n Duy T\^an}}
 
\begin{document}
\title{Relations in the maximal pro-$p$ quotients of absolute Galois groups}
\date{\today}
\dedicatory{Dedicated to John Labute}
 \author{ J\'an Min\'a\v{c}, Michael Rogelstad and \NDT}
\address{Department of Mathematics, Western University, London, Ontario, Canada N6A 5B7}
\email{minac@uwo.ca}
\address{Department of Mathematics, Western University, London, Ontario, Canada N6A 5B7}
\email{mrogelst@uwo.ca}
 \address{
 Institute of Mathematics, Vietnam Academy of Science and Technology, 18 Hoang Quoc Viet, 10307, Hanoi - Vietnam } 
\email{duytan@math.ac.vn}
\thanks{JM is partially supported  by the Natural Sciences and Engineering Research Council of Canada (NSERC) grant R0370A01. 
NDT is partially supported  by the Vietnam Academy of Science and Technology grant  {\DJ}LTE00.01/18-19.
}

 \begin{abstract}We observe that some  fundamental constructions in Galois theory can be used to obtain  interesting restrictions on the structure of Galois groups of maximal $p$-extensions of fields containing a primitive $p$-th root of unity. This is an extension of some significant ideas of Demushkin, Labute and Serre from local fields to all fields containing a primitive $p$-th root of unity. Our techniques use certain natural simple Galois extensions together with some considerations in Galois cohomology and Massey products. 
\end{abstract}
\maketitle
\section{Introduction}
The major open question in Galois theory is to describe the absolute Galois groups of fields among profinite groups. A description of the maximal pro-$p$ quotients of absolute Galois groups of general fields for a given prime number $p$ is already a challenging problem. For a field $F$, we denote by $F_{sep}$ the separable closure of $F$ in some algebraic closure of $F$. We set  $G_F=\Gal(F_{sep}/F)$,  the absolute Galois group of  $F$, and $G_F(p)$ its maximal pro-$p$ quotient. 
In the mid-1960s, some rather fascinating advances were made in the determination of $G_F(p)$ for local fields. Already in \cite{Sha47}, I. R. Shafarevich essentially showed  that $G_F(p)$ is a free pro-$p$ group if $F$ is a local field which does not contain a primitive $p$-th root of unity. (Shafarevich did not formulate this result in the language of profinite groups, as this language was introduced later.) In 1954 Y. Kawada \cite{Ka} showed that if $F$ is a local field containing a primitive $p$-th root of unity, then $G_F(p)$ admits a presentation
\[
1\to R\to S\to G_F(p)\to 1,
\]
where $S$ is a free pro-$p$ group and $R$ is a normal subgroup of $S$ generated by a single relation $r$. The challenging and extremely interesting problem of determining a possible $r$ explicitly was completely solved in a series of papers \cite{De1}, \cite{De2}, \cite{Se1} and \cite{Lab}. In fact Labute's paper \cite{Lab} completely classifies all Demushkin groups which include all $G_F(p)$, where $F$ is a local field containing a primitive $p$-th root of unity. One example of such a relation $r$ is 
\begin{equation}
\label{eq:1}
r=x_1^{p^s}[x_1,x_2]\cdots [x_{n-1},x_n],
\end{equation}
where $n$ is an even natural number and $s\in \N$.

There arises a natural question as to whether the groups $G_F(p)$ for other fields $F$
containing a primitive $p$-th root of unity can be described by relations of a similar shape.
In some previous papers including \cite{CEM}, \cite{EMi1}, \cite{EMi2},    \cite{MT2}  using the Bloch-Kato conjecture, which is now the Rost-Voevodsky theorem \cite{Voe}, or  techniques involving Massey products in Galois theory (see also \cite{Mat}, \cite{EMa}, \cite{MT1} and \cite{MT3}), it was shown that some relations which include the triple commutators $[[x_1,x_2],x_3]$ as a factor cannot be in $G_F(p)$ for a field containing a primitive $p$-th root of unity. 
The next question is about possible combinations of $p$-th powers and commutators in the shape of relations defining $G_F(p)$. 

During the summer of 2013 we obtained some ideas which showed that some simple Galois extensions obtained from $F$ by extracting suitable $p^c$-th roots of unity for different $c\in \N$ can be used to obtain interesting restrictions on the shape of products of $p$-th powers of generators and commutators in relations in $G_F(p)$. 
The idea is to produce some explicit small Galois extensions where the restrictions of the proposed relations to these Galois groups cannot possibly be valid. In retrospect these Galois extensions conceptually could be considered before the unipotent Galois extensions constructed in \cite{MTE}, \cite{MT3} or \cite{AMT}. 
The existence of these later extensions is related  to the vanishing of Massey products. (For the vanishing of triple Massey products see \cite{Mat}, \cite{EMa}, and \cite{MT1}.)
In our case the existence of our extension is governed by the structure of  roots of unity in the base field and just enough elements in the base field independent from the roots of unity. 
Again in retrospect we see that these Galois extensions are produced by extending  the techniques which were used in \cite{AS}, \cite{Be}, \cite{Wha} and others to produce some automatic large extensions, showing in particular that finite absolute Galois groups $G_F$ or finite $G_F(p)$ can only be groups of order dividing 2.


Our ideas mentioned above, form the basis of the current paper. In the thesis of M. Rogelstad \cite{Ro} Chapter 5, we described examples which well represent these ideas. In fact, as we shall see, some main theorems in our paper, including Theorem~\ref{thm:l<m} and Theorem~\ref{thm:1},  are direct extensions of the techniques presented in \cite{Ro} together with Labute's Proposition 6 in \cite{Lab}.  Let $p$ be an odd prime and $n$ an odd  positive integer. Let  $G=S/\langle r \rangle$, where $S$ is a free pro-$p$ group on generators $x_1,x_2,\ldots,x_n$, and 
\begin{equation}
\label{eq:2}
r=x_1^{p^s}[x_2,x_3]\cdots [x_{n-1},x_n],
\end{equation}
with $s\in \N$, and $\langle r\rangle $ is the smallest closed normal subgroup of $S$ which contains $r$. 
Theorem~\ref{thm:1} implies in particular that $G$ cannot be isomorphic to $G_F(p)$ for any field $F$ containing a primitive $p$-th root of unity. The case $s=1$ has already been implied by \cite[Theorem 5.1.2 and Theorem 5.2.1]{Ro}. Observe that the shape of relations (\ref{eq:1}) and (\ref{eq:2}) is quite similar. Nevertheless the difference between  these two relations is crucial. Indeed when we consider the realizability of $G=S/\langle r\rangle$ as a possible Galois group $G_F(p)$  for some field $F$ containing a primitive $p$-th root of unity, we see that we obtain different answers for the shape of $r$ in  form (\ref{eq:1}) or (\ref{eq:2}). Namely when we consider $r$ which has the shape described in (\ref{eq:1}), then the resulting group is realizable as $G_F(p)$ for some $F$ as above. 
However when we consider $r$ which has the shape described in (\ref{eq:2}), then the resulting group is not realizable as $G_F(p)$ for any such field $F$.
We  were well acquainted with \cite[Proposition 6]{Lab} and its relevance to our work. We realized that  it allows a generalization to the infinite case. (See Lemma~\ref{lem:Labute Prop 6}.) 
Throughout our paper a prominent role is played by the simple Galois extension $F(a,m)=F(\sqrt[p^m]{a},\zeta_{p^m})$ of $F$ introduced in Section 2. (See also \cite[Chapter 5]{Ro}, where we introduced these extensions for $m=1$ and 2 in our examples illustrating these ideas. The general case $m\in \N$ is an  extension of these examples.)

As we mentioned  above, in \cite{Lab}, Labute classified all Demushkin groups and in this way all $G_F(p)$, where $F$ is a local field. He provided explicit descriptions of relations in these groups.
It is  interesting to clarify to what extent we can generalize Labute's result to all fields. Our results form a contribution to this problem.
We mentioned some of these ideas to C. Quadrelli in the fall of 2013 and also in later discussions. I. Efrat and C. Quadrelli  developed a nice group-theoretical approach to this project.  
Their paper \cite{EQ} complements  our paper well, and we feel that both papers form a tribute to the remarkable thesis of John Labute.

  We hope that our paper will appeal to a  broad audience. In particular this paper should be accessible to graduate students.

The organization of our paper is as follows: In Section 2 we introduce our basic extensions $F(a,m)$ which we  substantially use throughout the paper to show that some relations in $G_F(p)$ cannot occur. In Section 3 we recall and slightly generalize parts of  Proposition 6 in \cite{Lab}. We then prove the main results,  Theorem~\ref{thm:l<m}, Theorem~\ref{thm:1}, Theorem~\ref{thm:T} and Theorem~\ref{thm:filtration},   which were previously illustrated in \cite{Ro} in a few examples.  We also summarize all of the main results in the Main Theorem at the end of this section.
In the last section, we consider a type of automatic Galois realization  (Theorem~\ref{thm:aab Galois}) and use it to also provide  some restrictions on the shape of relations in $G_F(p)$ (Theorem~\ref{thm:aab restriction}).
Finally in Appendix A we  introduce a natural union $CR(F)$ of all $F(a,m)$ for all $a\in F^\times$ and all $m\in F$, called the $p$-cyclotomic radical extension of $F$. This appendix  is an extension and continuation of Section 3.
\\
\\
{\bf Acknowledgements.} First of all we would like to thank Professor John Labute for his remarkable inspiring work, his discussions with the first-named author over a number of years often related to the structure of Galois groups, and his continuous encouragement and enthusiasm for our work. 
The first-named author would also like to thank  Professor John Tate for the extremely interesting discussions on related topics in Galois theory during a conference in 2007 in honor of Professor John Labute.
We also warmly thank other participants  of our informal seminar in the fall of 2013, namely Masoud Ataei and Claudio Quadrelli for some exciting discussions related to our project which we present in this paper. It is a great pleasure to thank Andy Schultz and Federico Pasini for allowing us to share  our previous version of this article with them, and for their valuable suggestions. It is also a great pleasure to thank Sunil Chebolu, Ido Efrat, Stefan Gille, Chris Hall and Marina Palaisti for some very interesting and stimulating discussions concerning relations in Galois groups.
We are also grateful to the referee for his/her comments and valuable suggestions which we have used to improve our exposition.
\\
\\
\noindent {\bf Notation and convention} 

We let $p$ denote a prime number and  $v_p$ the $p$-adic valuation.

If $x$ and $y$ are elements in a group,  $[x,y]=xyx^{-1}y^{-1}$ denotes the commutator of $x$ and $y$.

For a field $F$, we set $F^\times=F\setminus \{0\}$. For $a\in F^\times$, we denote by $[a]_F$, or simply $[a]$, the class of $a$ in the quotient group $F^\times/(F^\times)^p$. We let  $\sqrt[p^n]{a}$ denote a $p^n$-th root  of $a$. 

We denote by $\mu_{p^n}$ the group of $p^n$-th roots of unity, $\mu_{p^n}=\{z\in F_{sep}\mid z^{p^n}=1\}$, and set $\mu_{p^{\infty}}=\cup_{n\geq 1}\mu_{p^n}$.

Let $C_n$ denote the cyclic group of order $n$. 

For each positive integer $m$, we choose a primitive $p^m$-th root of unity $\zeta_{p^m}$ in such a way that $\zeta_{p^m}^p=\zeta_{p^{m-1}}$ for every $m=1,2,\ldots$, where $\zeta_1:=1$. 

For  a given prime number $p$, in our paper we assume that every considered base field $F$ (unless explicitly stated otherwise) satisfies the following condition.
\begin{condition}
\label{cond} If $p$ is odd then $F$ contains a primitive $p$-th root of unity $\zeta_p$. If $p=2$ then $F$ contains a primitive fourth root $\zeta_4$ of unity.
\end{condition}


\section{Some radical Galois extensions}
In this section, we assume that $\mu_{p^\infty}\not\subseteq F$. In this case let  $k$ be  the largest positive integer such that $\zeta_{p^k}\in F^\times$. Note that if $p=2$ then by Condition~\ref{cond}, $k\geq 2$.
Let $m$ be a positive integer such that $m\geq k$. Let $a\in F^\times$ such that $[a]\not\in \langle [\zeta_{p^k}]\rangle\subseteq F^\times/(F^\times)^p$, this means $a\not \in F^p\zeta_{p^k}^b$ for every $b\in \Z$.

\begin{lem} 
\label{lem:cyclic}
  We have $\Gal( F(\zeta_{p^m})/F)\simeq C_{p^{m-k}}$.
\end{lem}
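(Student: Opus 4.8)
The plan is to realize $\Gal(F(\zeta_{p^m})/F)$ inside the automorphism group of $\mu_{p^m}$ via the cyclotomic character and then pin down its image using the maximality of $k$. First I note that Condition~\ref{cond} guarantees $\Char F\neq p$ (a primitive $p$-th root of unity, resp.\ a primitive fourth root when $p=2$, cannot exist in characteristic $p$), so that $x^{p^m}-1$ is separable over $F$; since $F(\zeta_{p^m})$ is its splitting field, $F(\zeta_{p^m})/F$ is Galois. Any $\sigma$ in the Galois group permutes $\mu_{p^m}$ and hence satisfies $\sigma(\zeta_{p^m})=\zeta_{p^m}^{c(\sigma)}$ for a unique $c(\sigma)\in(\Z/p^m\Z)^\times$; this defines an injective homomorphism $c\colon \Gal(F(\zeta_{p^m})/F)\hookrightarrow (\Z/p^m\Z)^\times$.

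Next I would exploit that $\zeta_{p^k}\in F$. Writing $\zeta_{p^k}=\zeta_{p^m}^{p^{m-k}}$ and applying $\sigma$ shows that $\sigma$ fixes $\zeta_{p^k}$ if and only if $c(\sigma)\equiv 1 \pmod{p^k}$; since every $\sigma$ fixes $F\ni\zeta_{p^k}$, the image of $c$ lands in the subgroup $U_k:=\{u\in(\Z/p^m\Z)^\times: u\equiv 1\pmod{p^k}\}$, the kernel of the reduction $(\Z/p^m\Z)^\times\to(\Z/p^k\Z)^\times$, which has order $p^{m-k}$. This already yields the upper bound $[F(\zeta_{p^m}):F]\le p^{m-k}$.

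The decisive point is that $U_k$ is \emph{cyclic}, and here Condition~\ref{cond} is exactly what is needed: for $p$ odd the whole group $(\Z/p^m\Z)^\times$ is cyclic, hence so is $U_k$; for $p=2$ we have $k\ge 2$, so $U_k=\langle 1+2^k\rangle$ is cyclic of order $2^{m-k}$. Consequently $\Gal(F(\zeta_{p^m})/F)$ is cyclic, and its image is the unique subgroup of $U_k$ of some order $p^e$ with $e\le m-k$, namely $U_{m-e}=\{u\equiv 1\pmod{p^{m-e}}\}$ (the containment $U_{m-e}\subseteq U_k$ holds precisely because $e\le m-k$, and $|U_{m-e}|=p^{e}$).

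Finally I would recover $e$ from the maximality of $k$. For $j\le m$ the same computation gives $\zeta_{p^j}\in F$ if and only if $c(\Gal(F(\zeta_{p^m})/F))\subseteq U_j$, i.e.\ if and only if $U_{m-e}\subseteq U_j$, i.e.\ if and only if $j\le m-e$. Thus $m-e$ is the largest integer $j$ with $\zeta_{p^j}\in F$, which by the definition of $k$ equals $k$; hence $e=m-k$ and $\Gal(F(\zeta_{p^m})/F)\simeq U_k\simeq C_{p^{m-k}}$. The main obstacle is precisely the cyclicity of $U_k$, which upgrades the divisibility bound into an exact computation of the degree: the injection into $(\Z/p^m\Z)^\times$ delivers the upper bound for free, but turning it into an isomorphism requires both the group-theoretic structure of $U_k$ guaranteed by Condition~\ref{cond} and the hypothesis that $k$ is maximal.
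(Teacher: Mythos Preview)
Your argument is correct, and it takes a genuinely different route from the paper. The paper first establishes the degree $[F(\zeta_{p^m}):F]=p^{m-k}$ by showing that $x^{p^{m-k}}-\zeta_{p^k}$ is irreducible over $F$ (invoking the irreducibility criterion \cite[Chapter VI, Theorem 9.1]{Lan}, which requires checking $\zeta_{p^k}\notin F^p$ and, for $p=2$, $\zeta_{2^k}\notin -4F^4$); only afterwards does it use the injection into $(\Z/p^m\Z)^\times$ to conclude cyclicity. You instead work entirely through the cyclotomic character: the injection gives the upper bound and cyclicity simultaneously, and then you recover the exact order from the maximality of $k$ by identifying the image as $U_{m-e}$ and reading off $m-e=k$. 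Your approach is more self-contained (no appeal to an irreducibility criterion) and arguably cleaner; the paper's approach has the minor advantage that the degree is obtained directly, without the detour through the subgroup lattice of $U_k$, and the irreducibility of $x^{p^{m-k}}-\zeta_{p^k}$ is reused later anyway.
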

\begin{proof}
 Note that $\zeta_{p^k}\not\in F^p$ and if $p=2$ then $\zeta_{2^k}\not\in -F^2$ and in particular $\zeta_{2^k}\not\in -4F^4$.  Hence the polynomial $x^{p^{m-k}}-\zeta_{p^k}$ is irreducible (by \cite[Chapter VI, Theorem 9.1]{Lan}). Therefore $[F(\zeta_{p^m}):F]=p^{m-k}$. Furthermore, one has an injection
\[
\iota\colon \Gal(F(\zeta_{p^m})/F)\hookrightarrow (\Z/p^m\Z)^\times,
\]
which sends $\sigma\in \Gal(F(\zeta_{p^m})/F)$ to $\iota(\sigma)=[n_{\sigma}]\in (\Z/p^m\Z)^\times$ with 
\[
\sigma(\zeta) =\zeta^{n_\sigma},\quad \forall \zeta\in \mu_{p^m}.
\]
If $p$ is odd, then $(\Z/p^m\Z)^\times$ is cyclic. Hence $\Gal( F(\zeta_{p^m})/F)\simeq C_{p^{m-k}}$.

If $p=2$ then from $\zeta_4=\sigma(\zeta_4)=\zeta_4^{n_\sigma}$, we see that $n_\sigma\equiv 1\pmod 4$, for all $\sigma\in \Gal(F(\zeta_{2^m})/F)$. Thus 
\[
\Gal(F(\zeta_{2^m})/F)\simeq\im(\iota)\leq \langle [5]\rangle\leq (\Z/2^m\Z)^\times= \{\pm 1\}\times \langle [5]\rangle.\]
Therefore $\Gal( F(\zeta_{2^m})/F)$ is a cyclic group of order $2^{m-k}$.
\end{proof}

\begin{lem} 
\label{lem:a}
One has $a\not\in F(\zeta_{p^m})^p$. If $p=2$ then $a\not \in -4F(\zeta_{2^m})^4$.
\end{lem}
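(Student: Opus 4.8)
The plan is to prove the statement directly by contradiction: assuming $a\in E^p$, where I write $E:=F(\zeta_{p^m})$, I would show that $[a]\in\langle[\zeta_{p^k}]\rangle$, contradicting the hypothesis. The whole argument rests on the fact, established in Lemma~\ref{lem:cyclic}, that $\Gal(E/F)$ is cyclic of order $p^{m-k}$, so that $E/F$ possesses a \emph{unique} subextension of each degree dividing $p^{m-k}$; this is what lets me pin down where a putative $p$-th root of $a$ could live.

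First I would record that $a\notin F^p$: since $\langle[\zeta_{p^k}]\rangle$ contains the identity of $F^\times/(F^\times)^p$, the hypothesis $[a]\notin\langle[\zeta_{p^k}]\rangle$ already forces $[a]\neq 1$. Because $\mu_p\subseteq F$, the polynomial $x^p-a$ is then irreducible over $F$, so $F(\sqrt[p]{a})/F$ has degree $p$. Now suppose $a=\beta^p$ with $\beta\in E$. Then $F(\sqrt[p]{a})=F(\beta)\subseteq E$, so in particular $p\mid[E:F]=p^{m-k}$, i.e.\ $m>k$. Since $\Gal(E/F)$ is cyclic, $E$ contains exactly one degree-$p$ subextension of $F$; on the other hand $F(\zeta_{p^{k+1}})\subseteq E$ is such a subextension, of degree $p^{(k+1)-k}=p$ over $F$ by Lemma~\ref{lem:cyclic}. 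Hence $F(\sqrt[p]{a})=F(\zeta_{p^{k+1}})$. As $\zeta_{p^{k+1}}^p=\zeta_{p^k}$ and $\zeta_{p^k}\notin F^p$ (otherwise $\zeta_{p^k}=\gamma^p$ would make $\gamma$ a primitive $p^{k+1}$-th root of unity in $F$, against the maximality of $k$), we get $F(\zeta_{p^{k+1}})=F(\sqrt[p]{\zeta_{p^k}})$. Two degree-$p$ radical extensions $F(\sqrt[p]{a})$ and $F(\sqrt[p]{\zeta_{p^k}})$ coincide only when $[a]$ and $[\zeta_{p^k}]$ generate the same subgroup of $F^\times/(F^\times)^p$ (Kummer theory, using $\mu_p\subseteq F$); thus $[a]\in\langle[\zeta_{p^k}]\rangle$, the desired contradiction. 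This argument is uniform in $p$, covering $p=2$ via $\mu_2=\{\pm1\}\subseteq F$.

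For the second assertion (the case $p=2$) I expect essentially no extra work, since it reduces to the first via the presence of $\zeta_4$. Indeed, by Condition~\ref{cond} we have $\zeta_4\in F$, so $-1=\zeta_4^2\in (E^\times)^2$, whence every element of $-4E^4$ is already a square in $E$: for $\gamma\in E$ one has $-4\gamma^4=(2\zeta_4\gamma^2)^2$. Therefore $-4E^4\subseteq E^2$, and since the first part gives $a\notin E^2$, we conclude $a\notin -4E^4$. The hard part is really the first statement, and within it the only substantive point is the identification of the unique degree-$p$ subfield of $E/F$ with $F(\sqrt[p]{\zeta_{p^k}})$, followed by the translation back into $F^\times/(F^\times)^p$ through Kummer theory; the uniqueness of subextensions is exactly what the cyclicity from Lemma~\ref{lem:cyclic} supplies, and it is what makes the degree bookkeeping close up. The $p=2$ refinement is then a one-line consequence.
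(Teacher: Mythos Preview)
Your argument is correct. The key observation---that in the cyclic extension $E=F(\zeta_{p^m})$ over $F$ there is a \emph{unique} degree-$p$ intermediate field, namely $F(\zeta_{p^{k+1}})=F(\sqrt[p]{\zeta_{p^k}})$---immediately forces any $p$-th root of $a$ lying in $E$ to generate this field, and Kummer theory over $F$ then yields $[a]\in\langle[\zeta_{p^k}]\rangle$, the desired contradiction. Your treatment of the $p=2$ clause is identical to the paper's: once $a\notin E^2$ is known, $-4E^4\subseteq E^2$ via $-4\gamma^4=(2\zeta_4\gamma^2)^2$ finishes it.

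The paper takes a different route for the first statement. Rather than invoking uniqueness of subfields globally, it climbs the tower $F=F_0\subset F_1\subset\cdots\subset F_{m-k-1}$ with $F_l=F(\zeta_{p^{k+l}})$ and proves by induction that $[a]\notin\langle[\zeta_{p^{k+l}}]\rangle$ in $F_l^\times/(F_l^\times)^p$ for every $l$; the inductive step splits into cases according to whether $p$ divides the exponent of $\zeta_{p^{k+l}}$ and, in the nontrivial case, applies the norm $N_{F_l/F_{l-1}}$ to a putative relation $a=\zeta_{p^{k+l}}^{s}f^p$. Your approach is shorter and more conceptual: it replaces the step-by-step norm computation with a single appeal to the structure of cyclic extensions. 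The paper's induction does yield the slightly finer intermediate statements $[a]\notin\langle[\zeta_{p^{k+l}}]\rangle$ at every level, but these are not used elsewhere, so nothing is lost by your shortcut.
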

\begin{proof}
In order to show the first statement, by Kummer theory, it is enough to show that
\[
[a] \not\in \langle [\zeta_{p^{m-1}}]\rangle\subseteq F(\zeta_{p^{m-1}})^\times/{F(\zeta_{p^{m-1}})^\times}^p.
\]
For each $l=0,1,\ldots, m-k-1$, we let $F_l=F(\zeta_{p^{k+l}})$. We prove by induction on $l$ that 
\[
[a] \text{ is not in } \langle [\zeta_{p^{k+l}}]\rangle\subseteq F_l^\times/{F_l^\times}^p. 
\]
If $l=0$, then $F_0=F$ and  $[a]\not\in \langle [\zeta_{p^k}]\rangle\subseteq F^\times/{F^\times}^p$ by our assumption on $a$. Now suppose that $l>0$ and that $[a]\not\in \langle [\zeta_{p^{k+l-1}}]\rangle\subseteq F_{l-1}^\times/{F_{l-1}^\times}^p$. We shall show that 
$[a]\not\in \langle [\zeta_{p^{k+l}}]\rangle\subseteq F_l^\times/{F_l^\times}^p$. Suppose to the contrary that 
\[ (*)\quad \quad a=\zeta_{p^{k+l}}^s f^p, \text{ for some $s\in \Z$ and some $f\in F_l^\times$}.
\] 

If $p\mid s$ then $a\in (F_l^\times)^p\cap F_{l-1}^\times$. Hence by Kummer theory, one has
\[
[a]\in \dfrac{(F_l^\times)^p\cap F_{l-1}^\times}{{F_{l-1}^\times}^p}=\langle [\zeta_{p^{k+l-1}}]\rangle \subseteq  F_{l-1}^\times/{F_{l-1}^\times}^p,
\]
a contradiction to the induction hypothesis.

Now we consider the case that $p\nmid s$. By Lemma~\ref{lem:cyclic}, one has $[F_{l-1}(\zeta_{p^{k+l}}): F_{l-1}]=p$. Hence the polynomial $h(x):=x^p-\zeta_{p^{k+l-1}}\in F_{l-1}[x]$ is irreducible and one of its roots is $\zeta_{p^{k+l}}$. Hence 
\[
N_{F_{l-1}(\zeta_{p^{k+l}})/F_{l-1}}(\zeta_{p^{k+1}}) =(-1)^p(-\zeta_{p^{k+l-1}}).
\]
 Therefore by taking norms from $F_l$ down to $F_{l-1}$ on the both sides of (*), one gets
\[
a^p= (-1)^{ps}(-\zeta_{p^{k+l-1}})^s N_{F_l/F_{l-1}}(f)^p.
\]
Thus $(-\zeta_{p^{k+l-1}})^s\in (F_{l-1}^\times)^p$. Since $p\nmid s$, this implies that $-\zeta_{p^{k+l-1}}\in (F_{l-1}^\times)^p$. 
Since $-1=(-1)^p$ if $p$ is odd and $-1=\zeta_4^2\in (F^\times)^2$ if $p=2$, we see that $\zeta_{p^{k+l-1}}\in (F_{l-1}^\times)^p$. This is a contradiction to the induction hypothesis.

Now assume further that $p=2$ and $a\in -4F(\zeta_{2^m})^4$. We write $a=-4b^4$ for some $b\in F(\zeta_{2^m})$. Then
\[
a= \zeta_4^2 2^2 b^4= (2\zeta_4b^2)^2\in F(\zeta_{2^m})^2,
\]
a contradiction. Hence $a\not\in -4F(\zeta_{2^m})^4$.
\end{proof}

For such $a$ and $m$ as above, we define $F(a,m)=F(\zeta_{p^m},\sqrt[p^m]{a})$.
Then $F(a,m)/F$ is a Galois extension as $F(a,m)$ is the splitting field of the polynomial $x^{p^m}-a$. Let $G(a,m)=\Gal(F(a,m)/F)$. 
Define  two elements $\sigma,\tau$ of $G(a,m)$ by
\[
\begin{aligned}
\tau(\zeta_{p^m})=\zeta_{p^m} &\text{ and } \tau(\sqrt[p^m]{a})=\zeta_{p^m}\sqrt[p^m]{a};\\
\sigma(\sqrt[p^m]{a})=\sqrt[p^m]{a} &\text{ and } \sigma(\zeta_{p^m})=\zeta_{p^m}^{p^k+1}.
\end{aligned}
\]
(The existence of $\sigma$ and $\tau$ will be shown in the proof of the following proposition.)
\begin{prop} 
\label{prop:presentation of G(a,m)}
The Galois group $G(a,m)$ has the following presentation
\[
G(a,m)= \langle \sigma,\tau\mid \tau^{p^m}=\sigma^{p^{m-k}}=1,\sigma\tau\sigma^{-1}=\tau^{p^k+1}\rangle\simeq C_{p^m}\rtimes C_{p^{m-k}}.
\]
\end{prop}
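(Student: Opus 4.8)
The plan is to exhibit $G(a,m)$ as an internal semidirect product. Write $E=F(\zeta_{p^m})$ and $L=F(\sqrt[p^m]{a})$, so that $F(a,m)=EL$. I would produce $\tau$ as a generator of the normal subgroup $\Gal(F(a,m)/E)$ and $\sigma$ as a generator of a complementary subgroup $\Gal(F(a,m)/L)$, and then read off the three relations by evaluating the relevant automorphisms on the two field generators $\zeta_{p^m}$ and $\sqrt[p^m]{a}$.

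First I would pin down the normal factor. By Lemma~\ref{lem:a} we have $a\notin E^p$, and $a\notin -4E^4$ when $p=2$, so Lang's criterion \cite[Chapter VI, Theorem 9.1]{Lan} makes $x^{p^m}-a$ irreducible over $E$. As $\zeta_{p^m}\in E$, the extension $F(a,m)/E$ is cyclic of degree $p^m$, generated by the map fixing $E$ and sending $\sqrt[p^m]{a}\mapsto \zeta_{p^m}\sqrt[p^m]{a}$; this is exactly $\tau$, so $\tau$ exists and $\langle\tau\rangle=\Gal(F(a,m)/E)\simeq C_{p^m}$. Combined with $[E:F]=p^{m-k}$ from Lemma~\ref{lem:cyclic}, this gives $[F(a,m):F]=p^{2m-k}$.

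Next I would build the complement $\sigma$, which I expect to be the main obstacle: one must check that $x^{p^m}-a$ stays irreducible over the base field $F$ itself, not merely over $E$. This follows from $a\notin F^p$ (immediate, since $[a]\neq 0$) together with $a\notin -4F^4$ when $p=2$ (as $\zeta_4\in F$ forces $-4F^4\subseteq F^2$), so $[L:F]=p^m$ and hence $[F(a,m):L]=p^{m-k}$. The restriction map $\Gal(F(a,m)/L)\to\Gal(E/F)$ has kernel $\Gal(F(a,m)/EL)=1$ and equal source and target orders, so it is an isomorphism. Since the embedding $\Gal(E/F)\hookrightarrow(\Z/p^m\Z)^\times$ of Lemma~\ref{lem:cyclic} identifies $\Gal(E/F)$ with the classes $[n]$ satisfying $n\equiv 1\pmod{p^k}$, the class $[p^k+1]$ is a generator of order $p^{m-k}$; pulling it back through the isomorphism yields $\sigma\in\Gal(F(a,m)/L)$ fixing $\sqrt[p^m]{a}$ with $\sigma(\zeta_{p^m})=\zeta_{p^m}^{p^k+1}$, so that $\langle\sigma\rangle\simeq C_{p^{m-k}}$ is a complement to $\langle\tau\rangle$.

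Finally I would verify the relations and conclude. The orders $\tau^{p^m}=\sigma^{p^{m-k}}=1$ are just the orders of the two cyclic factors. For the conjugation relation, evaluating $\sigma\tau\sigma^{-1}$ on the generators yields $\zeta_{p^m}\mapsto\zeta_{p^m}$ and $\sqrt[p^m]{a}\mapsto\zeta_{p^m}^{p^k+1}\sqrt[p^m]{a}$, which is precisely the action of $\tau^{p^k+1}$; hence $\sigma\tau\sigma^{-1}=\tau^{p^k+1}$. Because $\sigma,\tau$ generate $G(a,m)$ and satisfy these relations, the assignment $t\mapsto\tau$, $s\mapsto\sigma$ defines a surjection onto $G(a,m)$ from the abstract group presented by $\langle s,t\mid t^{p^m}=s^{p^{m-k}}=1,\ sts^{-1}=t^{p^k+1}\rangle$. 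Since $(p^k+1)^{p^{m-k}}\equiv 1\pmod{p^m}$, this presentation defines a well-defined semidirect product $C_{p^m}\rtimes C_{p^{m-k}}$ of order $p^{2m-k}=|G(a,m)|$, so the surjection is forced to be an isomorphism.
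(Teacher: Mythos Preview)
Your proof is correct and follows essentially the same approach as the paper's: exhibit $G(a,m)$ as the internal semidirect product of $\Gal(F(a,m)/E)=\langle\tau\rangle$ by $\Gal(F(a,m)/L)=\langle\sigma\rangle$ and verify the conjugation relation by direct evaluation on $\zeta_{p^m}$ and $\sqrt[p^m]{a}$. The only cosmetic differences are that you establish $[L:F]=p^m$ directly from Lang's criterion over $F$ (whereas the paper deduces it from the degree identity $[F(a,m):F]=p^{2m-k}$), and you construct $\sigma$ via the restriction isomorphism $\Gal(F(a,m)/L)\to\Gal(E/F)$ and the explicit image in $(\Z/p^m\Z)^\times$, while the paper instead re-applies Lemma~\ref{lem:cyclic} with $L$ as the base field; both routes yield the same generator.
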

\begin{proof} 
By Lemma~\ref{lem:a} and by  \cite[Chapter VI, Theorem 9.1]{Lan}, $x^{p^m}-a$ is an irreducible polynomial over $F(\zeta_{p^m})$.  Hence  $F(a,m)=F(\zeta_{p^m})(\sqrt[p^m]{a})$ has degree $p^m$ over $F(\zeta_{p^m})$. 
Thus by Lemma~\ref{lem:cyclic}, we have
\[
[F(a,m):F]=[F(\zeta_{p^m})(\sqrt[p^m]{a}):F(\zeta_{p^m})][F(\zeta_{p^m}):F]=
p^mp^{m-k}=[F(\sqrt[p^m]{a}):F][F(\zeta_{p^m}):F].
\]
This implies that $F(\sqrt[p^m]{a})\cap F(\zeta_{p^m})=F$. 
By the Galois correspondence, the smallest subgroup of $\Gal(F(a,m)/F)$ containing both $\Gal(F(a,m)/F(\zeta_{p^m}))$ and $\Gal(F(a,m)/F(\sqrt[p^m]{a}))$ is the whole Galois group $\Gal(F(a,m)/F)$. Hence  
\[\Gal(F(a,m)/F(\zeta_{p^m})) \Gal(F(a,m)/F(\sqrt[p^m]{a}))= \Gal(F(a,m)/F). 
\]
Clearly, by Kummer theory, one has $\Gal(F(a,m)/F(\zeta_{p^m})\simeq C_{p^m}$. Hence there exists a generator $\tau$ in $\Gal(F(a,m)/F(\zeta_{p^m})$ such that 
\[
\tau(\sqrt[p^m]{a})=\zeta_{p^m}\sqrt[p^m]{a}.
\]
By Lemma~\ref{lem:cyclic} applied to $F(\sqrt[p^m]{a})$, we see that $\Gal(F(a,m)/F(\sqrt[p^m]{a}))$ is cyclic and hence 
 $\Gal(F(a,m)/F(\sqrt[p^m]{a}))\simeq C_{p^{m-k}}$. Thus there is  a generator $\sigma$ in $\Gal(F(a,m)/F(\sqrt[p^m]{a}))$ such that
\[
\sigma(\zeta_{p^m})=\zeta_{p^{m-k}}\zeta_{p^m}=\zeta_{p^m}^{p^k+1}.
\]
By a direct computation, we see that
\[
\sigma\tau= \tau^{p^k+1}\sigma.
\]   
Therefore 
\[
\begin{aligned}
\Gal(F(a,m)/F)&=\Gal(F(a,m)/F(\zeta_{p^m})) \rtimes \Gal(F(a,m)/F(\sqrt[p^m]{a}))\\
&=\langle \sigma,\tau\mid \tau^{p^m}=\sigma^{p^{m-k}}=1,\sigma\tau\sigma^{-1}=\tau^{p^k+1}\rangle\simeq C_{p^m}\rtimes C_{p^{m-k}}.
\qedhere
\end{aligned}
\]
\end{proof}

Recall that for a profinite group G and a prime number $p$, the descending central series $(G_i)$, the $p$-descending central series $(G^{(i)})$, and the $p$-Zassenhaus filtration $(G_{(i)})$ of $G$ are defined inductively by
\[
G_1=G, \quad G_{i+1}=[G_i,G], \quad i=2,3,\ldots,
\]
by
\[
G^{(1)} = G, \quad G^{(i+1)} =(G^{(i)})^p[G^{(i)},G], \quad i=2,3,\ldots,
\]
and by
\[
G_{(1)}=G, \quad G_{(n)}=G_{(\lceil n/p\rceil)}^p\prod_{i+j=n}[G_{(i)},G_{(j)}], \quad n=2,3\ldots,
\]
where $\lceil n/p \rceil$ is the least integer which is greater than or equal to $n/p$.  (Here for closed subgroups $H$ and $K$ of $G$, the symbol $[H, K]$ means the smallest closed subgroup of $G$ containing the commutators $[x, y] =xyx^{-1}y^{-1}, x \in H, y\in K$. Similarly, $H^p$ means the smallest closed subgroup of $G$ containing the $p$-th powers $x^p$, $x \in H$. Observe that in this notation we are omitting the traditional use of a bar to indicate closure. For example, we simply write $L$ rather than $\bar{L}$ for the closure of $L$ in $G$.)

Recall  also that a pro-$p$-group $D$ is {\it powerful} if $D/D^p$ is abelian for odd $p$ and $D/D^4$ is abelian for $p=2$. 
\begin{prop}
\label{prop:G powerful}
 Let $m\geq k$ be positive integers and $k\geq 2$ if $p=2$.
  Let $G=G(a,m)$ be the group as in Proposition~\ref{prop:presentation of G(a,m)}:
\[
G:=G(a,m)=\langle \sigma,\tau\mid \tau^{p^m}=\sigma^{p^{m-k}}=1,\sigma\tau\sigma^{-1}=\tau^{p^k+1}\rangle.
\] 
\begin{enumerate}
\item $G_{i+1}= \langle \tau^{p^{ki}}\rangle$, for all $i\geq 1$. 
\item $G$ is  powerful.
\item For each $n\geq 1$, one has $G_{(n)}=G^{p^s}$, with $p^{s-1}<n\leq p^s$.
\end{enumerate}
\end{prop}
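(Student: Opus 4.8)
The plan is to push everything into the normal cyclic subgroup $\langle\tau\rangle\cong C_{p^m}$, using the relation in the form $\sigma^c\tau^d\sigma^{-c}=\tau^{d(1+p^k)^c}$ together with elementary $p$-adic valuation estimates. For (1) I would induct on $i$. The base case is $[\sigma,\tau]=\tau^{(1+p^k)-1}=\tau^{p^k}$; since $\langle\tau^{p^k}\rangle$ is characteristic in the normal subgroup $\langle\tau\rangle$ it is normal in $G$, and as $G=\langle\sigma,\tau\rangle$ the subgroup $G_2=[G,G]$ is the normal closure of $[\sigma,\tau]$, so $G_2=\langle\tau^{p^k}\rangle$. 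Assuming $G_{i+1}=\langle\tau^{p^{ki}}\rangle$, I compute $[\tau^{p^{ki}},\sigma]=\tau^{p^{ki}}\tau^{-p^{ki}(1+p^k)}=\tau^{-p^{k(i+1)}}$ and $[\tau^{p^{ki}},\tau]=1$; since $G_{i+1}$ is the normal closure of $\tau^{p^{ki}}$, the subgroup $G_{i+2}=[G_{i+1},G]$ is the normal closure of these two commutators, which (again using normality of $\langle\tau^{p^{k(i+1)}}\rangle$) equals $\langle\tau^{p^{k(i+1)}}\rangle$.

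Part (2) follows at once: (1) gives $[G,G]=\langle\tau^{p^k}\rangle$, and since $k\geq1$ for odd $p$ one has $\tau^{p^k}=(\tau^{p^{k-1}})^p\in G^p$, while for $p=2$ one has $k\geq2$ and $\tau^{2^k}=(\tau^{2^{k-2}})^4\in G^4$; hence $[G,G]\subseteq G^p$ (resp. $G^4$) and $G$ is powerful. For (3) the key preliminary is the explicit form of the power subgroups. From $\sigma^c\tau^d=\tau^{d(1+p^k)^c}\sigma^c$ one obtains the collection formula $(\sigma^a\tau^b)^p=\sigma^{ap}\tau^{bS_a}$ with $S_a=\sum_{t=0}^{p-1}(1+p^k)^{-at}$, and a valuation count (using $k\geq1$, resp. $k\geq2$ at $p=2$) gives $v_p(S_a)=1$. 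Iterating yields $(\sigma^a\tau^b)^{p^s}=\sigma^{ap^s}\tau^{bp^sw}$ with $w$ a unit mod $p^m$, so that $G^{p^s}=\langle\sigma^{p^s},\tau^{p^s}\rangle=\langle\tau^{p^s}\rangle\rtimes\langle\sigma^{p^s}\rangle$; in particular $G^{p^s}\cap\langle\tau\rangle=\langle\tau^{p^s}\rangle$ and $(G^{p^{s-1}})^p=G^{p^s}$. I also record $[\sigma^{p^a},\tau^{p^b}]=\tau^{p^b((1+p^k)^{p^a}-1)}$ with $v_p((1+p^k)^{p^a}-1)=k+a$ (lifting the exponent, valid at $p=2$ since $1+2^k\equiv1\bmod4$), giving $[G^{p^a},G^{p^b}]=\langle\tau^{p^{a+b+k}}\rangle$.

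I would then prove $G_{(n)}=G^{p^s}$ (for $p^{s-1}<n\leq p^s$) by induction on $n$ through the recursion $G_{(n)}=G_{(\lceil n/p\rceil)}^p\prod_{i+j=n}[G_{(i)},G_{(j)}]$. As $\lceil n/p\rceil$ lies in the range with parameter $s-1$, the inductive hypothesis and the power relation give $G_{(\lceil n/p\rceil)}^p=(G^{p^{s-1}})^p=G^{p^s}$. For the commutator terms, writing $G_{(i)}=G^{p^{s_i}}$ and $G_{(j)}=G^{p^{s_j}}$ with $i+j=n$, the estimate above gives $[G_{(i)},G_{(j)}]=\langle\tau^{p^{k+s_i+s_j}}\rangle$, and $n=i+j\leq p^{s_i}+p^{s_j}\leq p^{\max(s_i,s_j)+1}$ forces $\max(s_i,s_j)\geq s-1$, whence $k+s_i+s_j\geq s$ and $[G_{(i)},G_{(j)}]\subseteq\langle\tau^{p^s}\rangle\subseteq G^{p^s}$. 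Thus $G_{(n)}=G^{p^s}$. Equivalently, one may insert the lower central series computed in (1) into the Jennings--Lazard expression $G_{(n)}=\prod_{ip^j\geq n}G_i^{p^j}$ and reduce to the inequality $k(i-1)+j\geq s$ for $ip^j\geq n>p^{s-1}$, which follows from $p^d\geq d+1$.

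The main obstacle is the exact determination of $G^{p^s}$ underlying (3): the whole argument rests on the two sharp valuations $v_p(S_a)=1$ and $v_p((1+p^k)^{p^a}-1)=k+a$. The latter is exactly where the standing hypothesis $k\geq2$ for $p=2$ is indispensable, since lifting the exponent at $p=2$ needs $1+2^k\equiv1\bmod4$. Once these valuations are established the remainder is routine bookkeeping of which commutator layers survive at level $n$, governed uniformly by the elementary inequality $p^d\geq d+1$.
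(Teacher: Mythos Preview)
Your arguments for (1) and (2) coincide with the paper's. For (3) the paper takes exactly the alternative you sketch in your last sentence: it invokes the Lazard formula $G_{(n)}=\prod_{ip^h\ge n}G_i^{p^h}$ (cited as \cite[Theorem~11.2]{DdSMS}), inserts (1) to write the $i\ge 2$ factors as $\langle\tau^{p^{k(i-1)+h}}\rangle$, and then checks $k(i-1)+h\ge s$ via $p^{k(i-1)+h}\ge ip^h\ge n>p^{s-1}$ (your inequality $p^d\ge d+1$ in disguise). Your primary route---establishing $G^{p^s}=\langle\sigma^{p^s},\tau^{p^s}\rangle$ through the collection formula and the valuations $v_p(S_a)=1$, $v_p((1+p^k)^{p^a}-1)=k+a$, and then inducting directly on the recursive definition of $G_{(n)}$---is a correct and self-contained alternative that avoids citing the Lazard formula, at the cost of the extra $p$-adic bookkeeping. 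The paper's route is shorter but leans on the external reference; yours makes the argument elementary. Both are valid, and you have correctly identified that the hypothesis $k\ge 2$ when $p=2$ is exactly what makes the lifting-the-exponent step go through.
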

\begin{proof}
(1) We prove by induction on $i$. For $i=1$, we have  
\[
G_2= [G,G]= \langle [\sigma,\tau]\rangle =\langle \tau^{p^k}\rangle. 
\]
Now assume that the formula is true for $i$. We have
\[
\sigma\tau^{p^{ki}}\sigma^{-1} =(\sigma\tau\sigma^{-1})^{p^{ki}}=(\tau^{p^k+1})^{p^{ki}}=\tau^{p^{k(i+1)}}\tau^{p^{ki}}.
\]
Therefore
\[
G_{i+2}=[G,G_{i+1}]=\langle[\sigma,\tau^{p^{ki}}]\rangle=\langle \tau^{p^{k(i+1)}}\rangle, 
\]
as desired.

(2) One has $G_2=[G,G] =\langle \tau^{p^k}\rangle \leq \langle \tau^p\rangle\leq G^p$ if $p$ is odd, and $G_2=[G,G]=\langle \tau^{2^k}\rangle \leq \langle \tau^4\rangle \leq G^4$ if $p=2$. Hence  $G$ is powerful.

(3) By \cite[Theorem 11.2]{DdSMS} and by (1), we have
\[
G_{(n)}=\prod_{ip^h\geq n}G_i^{p^h}= G^{p^s} \prod_{i\geq 2; ip^h\geq n} \langle\tau^{p^{k(i-1)+h}}\rangle.
\]
For $i\geq 2$ and $ip^h\geq n$, one has $p^{k(i-1)}\geq p^{i-1}\geq i$ and
\[
p^{k(i-1)+h} \geq i p^h\geq n>p^{s-1}.
\]
Hence $k(i-1)+h\geq s$. Thus $\langle\tau^{p^{k(i-1)+h}}\rangle \leq \langle\tau^{p^s}\rangle \leq G^{p^s}$ and $G_{(n)}=G^{p^s}$.
\end{proof}
\begin{prop} Let the notation be as in Proposition~\ref{prop:G powerful}.
\label{prop:exponent of G}
\begin{enumerate}
\item The exponent of $G(a,m)$ is $p^m$.
\item The smallest $n_0$ such that $G^{(n_0)}=1$ is $n_0=m+1$.
\item The smallest $m_0$ such that $G_{(m_0)}=1$ is $m_0=p^{m-1}+1$.
\end{enumerate}
\end{prop}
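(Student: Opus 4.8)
The plan is to extract all three statements from a single computation, the exponent in part (1); parts (2) and (3) then follow formally by combining (1) with the structural results already recorded in Proposition~\ref{prop:G powerful}. So the heart of the matter is part (1).

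For part (1) the lower bound is immediate: $\tau$ has order $p^m$, whence $\exp(G)\geq p^m$. For the upper bound I would exploit that $G$ is powerful (Proposition~\ref{prop:G powerful}(2)). For a finite powerful $p$-group generated by $\sigma$ and $\tau$ one has $G^{p^i}=\langle\sigma^{p^i},\tau^{p^i}\rangle$, and moreover every element of $G^{p^i}$ is a genuine $p^i$-th power (\cite[Chapter 2]{DdSMS}). Taking $i=m$ gives $G^{p^m}=\langle\sigma^{p^m},\tau^{p^m}\rangle=1$, since $\tau^{p^m}=1$ and the order $p^{m-k}$ of $\sigma$ divides $p^m$; hence every $p^m$-th power in $G$ is trivial and $\exp(G)\mid p^m$, so $\exp(G)=p^m$. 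A self-contained alternative, avoiding the powerful-group machinery, is to write a general element as $\tau^a\sigma^b$, expand $(\tau^a\sigma^b)^{p^m}$ using $\sigma\tau\sigma^{-1}=\tau^{p^k+1}$ into the form $\tau^{aS}\sigma^{bp^m}$ with $S=\sum_{j=0}^{p^m-1}(1+p^k)^{bj}$, and check via the lifting-the-exponent lemma that $v_p(S)=m$; the standing hypothesis $k\geq 2$ for $p=2$ is precisely what makes the $p=2$ case of that lemma go through.

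For part (2) I would invoke the standard fact that for a powerful $p$-group the $p$-descending central series is the chain of $p$-power subgroups, $G^{(i)}=G^{p^{i-1}}$ for all $i\geq 1$ (\cite[Chapter 3]{DdSMS}; alternatively this follows by induction from $[G,G]\leq G^p$ together with the commutator identities for powerful groups). Since part (1) shows $G^{p^j}=1$ exactly when $j\geq m$, we get $G^{(i)}=1$ if and only if $i-1\geq m$, so the smallest such index is $n_0=m+1$. For part (3) the answer is read off directly from Proposition~\ref{prop:G powerful}(3), which gives $G_{(n)}=G^{p^s}$ whenever $p^{s-1}<n\leq p^s$. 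By part (1), $G^{p^s}=1$ precisely when $s\geq m$; the smallest admissible value $s=m$ corresponds to the range $p^{m-1}<n\leq p^m$, whereas $n=p^{m-1}$ forces $s=m-1$ and hence $G_{(p^{m-1})}=G^{p^{m-1}}\neq 1$. Thus $G_{(n)}\neq 1$ for all $n\leq p^{m-1}$ while $G_{(p^{m-1}+1)}=1$, giving $m_0=p^{m-1}+1$.

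The only genuine obstacle is the upper bound in part (1); everything after it is bookkeeping built on Proposition~\ref{prop:G powerful}. The delicate points are to cite (or reprove) the correct powerful-group statements uniformly in $p$ and, if one prefers the hands-on route, to control the $p=2$ case of the valuation estimate for $S$, which is exactly where $k\geq 2$ enters.
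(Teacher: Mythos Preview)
Your proposal is correct and follows essentially the same route as the paper: both exploit that $G$ is powerful to get $G^{(n)}=G^{p^{n-1}}=\langle\sigma^{p^{n-1}},\tau^{p^{n-1}}\rangle$, read off the exponent $p^m$ from the orders of $\sigma$ and $\tau$, deduce $n_0=m+1$, and (for part (3)) feed this back into Proposition~\ref{prop:G powerful}(3). Your write-up is in fact more detailed than the paper's own proof, which is terse and does not spell out part~(3) explicitly.
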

\begin{proof} Since $G:=G(a,m)=\langle \sigma,\tau\rangle$ is powerful, we have 
\[G^{(n)}=G^{p^{n-1}}=\langle \sigma^{p^{n-1}},\tau^{p^{n-1}}\rangle=\langle \{x^{p^{n-1}}\mid x\in G\}\rangle.
\]
From this we see that the exponent of $G$ is $m$ and  that the smallest $n_0$ such that $G^{(n_0)}=1$ is 
\[ n_0=\log_p(\text{exponent of $G$})+1=m+1.\]
\end{proof}
\section{Relations in the maximal pro-$p$ quotient of absolute Galois groups}

The following result will be used below to prove Lemma~\ref{lem:Labute Prop 6}.

\begin{lem}
\label{lem:D}
  Let $G$ be a pro-$p$-group with a minimal set of  generators $\{x_j\}_{j\in J}$. 
Then for any family $\{a_j\}_{j\in J}$ of elements in $\Z/p\Z$ having the property that $a_j\not=0$ only for a finite number  of $j\in J$, there exists a continuous homomorphism $D\colon G\to \Z/p\Z$ such that $D(x_j)=a_j$ for all $j\in J$. 
\end{lem}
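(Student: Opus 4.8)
The plan is to construct the homomorphism $D$ directly as a composite of two natural maps. The key observation is that a continuous homomorphism from $G$ to the abelian group $\Z/p\Z$ must factor through the maximal elementary abelian quotient $G/G^{(2)}$, where $G^{(2)}=G^p[G,G]$ is the Frattini-type subgroup appearing in the $p$-descending central series. Since $\Z/p\Z$ is an elementary abelian pro-$p$ group, every continuous homomorphism $G\to\Z/p\Z$ kills $G^{(2)}$, so it suffices to produce a homomorphism on the quotient $V:=G/G^{(2)}$.

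The main structural input is that $V$ is a (possibly infinite-dimensional, topologically generated) vector space over $\F_p=\Z/p\Z$, and that the images $\{\bar x_j\}_{j\in J}$ of a \emph{minimal} generating set form a topological basis of $V$. This is exactly the content of the Burnside basis theorem for pro-$p$ groups: a subset of $G$ is a minimal generating set if and only if its image in $G/G^{(2)}$ is an $\F_p$-basis of $V$. I would invoke this to reduce the problem to linear algebra over $\F_p$.

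First I would pass to $V=G/\Phi(G)$ where $\Phi(G)=G^{(2)}$ is the Frattini subgroup, and note $V$ is a topological $\F_p$-vector space with basis $\{\bar x_j\}_{j\in J}$. Next, since only finitely many $a_j$ are nonzero, the prescription $\bar x_j\mapsto a_j$ defines a genuine $\F_p$-linear functional on $V$: on the dense subspace spanned by the basis it is the unique linear extension, and because its support is finite it is automatically continuous for the profinite (hence pro-discrete) topology on $V$ — a linear functional vanishing on all but finitely many basis vectors factors through a finite-dimensional quotient, so it is continuous. Finally I would compose the quotient map $G\surj V$ with this functional $V\to\Z/p\Z$ to obtain the desired continuous homomorphism $D$, and check $D(x_j)=a_j$ directly from the construction.

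The step I expect to be the main obstacle is the continuity verification, since $J$ may be infinite and one must be careful that an arbitrary $\F_p$-linear functional on an infinite-dimensional topological vector space need not be continuous. The finiteness hypothesis on the support of $\{a_j\}$ is precisely what saves this: the functional annihilates the closed subspace generated by $\{\bar x_j : a_j=0\}$, and the quotient by that subspace is finite-dimensional and discrete, so the induced map is trivially continuous. I would make sure to highlight that this finiteness assumption is doing the essential work, rather than treating continuity as automatic.
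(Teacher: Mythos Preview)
Your argument is correct. The paper's own proof is a one-line citation of \cite[Theorem 6.2]{Ko}, and what you have written is essentially the content behind that citation: pass to the Frattini quotient $V=G/\Phi(G)$, use the Burnside basis theorem to identify the images $\bar x_j$ as a topological $\F_p$-basis of $V\cong\prod_{J}\F_p$, and observe that the continuous dual of $\prod_{J}\F_p$ is $\bigoplus_{J}\F_p$, so exactly the finitely supported families $(a_j)$ arise as values on the generators. Your emphasis that the finite-support hypothesis is what guarantees continuity (via factoring through a finite quotient of $V$) is the right point to stress, and it matches how Koch's dual-basis statement is proved; so your route and the paper's are the same, with yours simply spelled out rather than deferred to a reference.
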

\begin{proof} This follows from \cite[Theorem 6.2]{Ko}.
\end{proof}
Let $G$ be a pro-$p$-group, $\U_p=\Z_p^\times$ the group of $p$-adic units with the $p$-adic topology, and $\chi$ a continuous homomorphism of $G$ to $\U_p$. 
We define an action of $G$ on $\Z_p$ by $\sigma\cdot x=\chi(\sigma)x$ for $\sigma\in G$, $x\in \Z_p$.  Then $\Z_p$, with the $p$-adic topology, becomes a topological $G$-module which we denote by  $\sI=\sI(\chi)$. The following result is a variant of \cite[Proposition 6]{Lab}. By using the previous lemma, the proof in \cite{Lab} still works well in this case.  For  the convenience of the reader, we reproduce the proof with suitable adjustments here.

Observe that for each $i \in \N$, the module $\sI/p^i\sI$ is a discrete $G$-module. This means that for each continuous crossed homomorphism $D \colon G \to  \sI/p^i\sI$, the kernel is an open subgroup of $G$. In particular, the kernel of $D$ contains all but finitely many generators of $G$. (See \cite[Definition 4.1 and Theorem 1.22]{Ko}.) In the proof of Lemma~\ref{lem:Labute Prop 6} we use this observation.

\begin{lem}
\label{lem:Labute Prop 6}
 Consider the following two statements:
\begin{enumerate}
\item For all $m\geq 1$ the canonical homomorphism $H^1(G,\sI/p^m\sI)\to H^1(G,\sI/p\sI)$ is surjective.
\item For all $m\geq 1$ we may arbitrarily prescribe the values of crossed homomorphisms of $G$ to $\sI/p^i\sI$ on a minimal system of generators of $G$ provided we require that for all but a finite number of generators, these values are 0.
\end{enumerate}
Then $(1)$ implies $(2)$.  
\end{lem}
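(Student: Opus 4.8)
The plan is to prove the implication by induction on $i$, using the reduction-modulo-$p$ short exact sequence of $G$-modules together with hypothesis $(1)$ to lift cohomology classes, and the preceding lemma to start the induction. Throughout, $Z^1$ and $B^1$ denote continuous cocycles and coboundaries.

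First I would settle the base case $i=1$. Since $G$ is a pro-$p$-group and the finite group $(\Z/p\Z)^\times$ has order prime to $p$, the composite $G\xrightarrow{\chi}\Z_p^\times\to(\Z/p\Z)^\times$ has image a pro-$p$ subgroup of a group of order prime to $p$, hence is trivial; so $G$ acts trivially on $\sI/p\sI=\Z/p\Z$. Consequently $B^1(G,\sI/p\sI)=0$ and every crossed homomorphism to $\sI/p\sI$ is an ordinary continuous homomorphism, giving $H^1(G,\sI/p\sI)=Z^1(G,\sI/p\sI)=\Hom(G,\Z/p\Z)$. Statement $(2)$ for $i=1$ is then exactly Lemma~\ref{lem:D}.

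For the inductive step, assume $(2)$ for $i-1$ and let $\{a_j\}_{j\in J}$ in $\sI/p^i\sI$ have finite support. Consider
\[
0 \to \sI/p^{i-1}\sI \xrightarrow{\;p\;} \sI/p^i\sI \xrightarrow{\;r\;} \sI/p\sI \to 0,
\]
with $r$ reduction modulo $p$. The family $\{r(a_j)\}$ has finite support, so by Lemma~\ref{lem:D} there is $\phi\in Z^1(G,\sI/p\sI)$ with $\phi(x_j)=r(a_j)$. By hypothesis $(1)$ the class $[\phi]$ lies in the image of $H^1(G,\sI/p^i\sI)\to H^1(G,\sI/p\sI)$, so there is $\psi\in Z^1(G,\sI/p^i\sI)$ with $[r\circ\psi]=[\phi]$. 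As the action on $\sI/p\sI$ is trivial, $B^1(G,\sI/p\sI)=0$, hence $r\circ\psi=\phi$ on the nose, and therefore $\psi(x_j)\equiv a_j\pmod{p}$ for every $j$.

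It remains to correct $\psi$ on the generators. For each $j$ the element $a_j-\psi(x_j)$ lies in $\ker r=p\,\sI/p^i\sI$, so $a_j-\psi(x_j)=p\,c_j$ for a unique $c_j\in\sI/p^{i-1}\sI$. The hard part — the only place the hypothesis $J$ infinite causes trouble — is to see that $\{c_j\}$ still has finite support: since a minimal system of generators of a pro-$p$-group converges to $1$ and $\psi$ is continuous with $\psi(1)=0$, one has $\psi(x_j)=0$ in the finite discrete module $\sI/p^i\sI$ for all but finitely many $j$, so $c_j=0$ off a finite set. Applying the inductive hypothesis $(2)$ for $i-1$ to $\{c_j\}$ produces $h\in Z^1(G,\sI/p^{i-1}\sI)$ with $h(x_j)=c_j$, and then $f:=\psi+p\,h\in Z^1(G,\sI/p^i\sI)$ (where $p\,h$ is $h$ followed by the injection above) satisfies $f(x_j)=\psi(x_j)+p\,c_j=a_j$ for all $j$, completing the induction. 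I expect this finite-support verification, rather than any cohomological manipulation, to be the main obstacle, precisely because it is the point at which the passage from Labute's finitely generated setting to the present possibly infinitely generated one must be justified.
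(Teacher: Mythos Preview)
Your proof is correct and follows essentially the same route as the paper's: induction on $i$, the multiplication-by-$p$ short exact sequence, the base case via Lemma~\ref{lem:D}, lifting a class through $(1)$, and correcting by a cocycle produced from the inductive hypothesis. The only difference is that you spell out why the lifted cocycle $\psi$ still has finite support on the generators (via convergence of a minimal generating system to $1$ and continuity), whereas the paper simply asserts ``$D_1(g_i)=0$ for all but finitely many $i$'s''; both arguments rest on the same standard convention for generating sets of pro-$p$ groups.
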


\begin{proof}
Observe that $G$ acts trivially on $\sI/p\sI=\Z/p\Z$ because  any continuous homomorphism from any pro-$p$-group into $(\Z/p\Z)^\times$ is trivial.
 We shall proceed our proof by induction on $i\geq 1$. If  $m=1$ then our statement follows therefore from Lemma~\ref{lem:D}.
 We shall now assume that our statement is valid for $m-1$ and prove it for $m$ using  the exact sequence
 \[
 0\to \sI/p^{m-1}\sI \stackrel{\lambda}{\to} \sI/p^m\sI \to \sI/p\sI\to 0, 
 \]
 where $\lambda$ is induced by  multiplication by $p$.
  
   Let $g_i$, $i\in I$, be a minimal system of topological generators of $G$ and let $a_i$, $i\in I$  be elements in $\sI/p^i\sI$ with $a_i=0$ for all but finitely many $i$'s.  Using (1) we can find a crossed homomorphism $D_1$ of $G$ into $\sI/p^m\sI$ such that $b_i:=D_1(g_i)-a_i\in \im(\lambda)$. 
   One has $D_1(g_i)=0$ for all but finitely many $i$'s. Thus $b_i=0$ for all but finitely many $i$'s. By the inductive hypothesis there exists a crossed homomorphism $D_2$ of $G$ into $\sI/p^{m-1}\sI$ such that
$D_2(g_i)=\lambda^{-1}(b_i)$.
 Then $D = D_1-\lambda\circ D_2$ is a crossed homomorphism of $G$ into $\sI/p^m\sI$ such that $D(g_i) = a_i$. 
\end{proof}
Now suppose that   $F$  is any field containing a primitive $p$-th root of unity. 
There exists a canonical isomorphism
\[
h\colon {\rm Aut}(\mu_{p^\infty}) \simeq \U_p,
\]
given by $\sigma(\xi)=\xi^{h(\sigma)}$. The action of $G_F(p)$ on $\mu_{p^\infty}$ is given  by a character
\[
\chi_{p,cycl}\colon G_F(p) \to \U_p.
\]
The character $\chi_{p,cycl}$ is called the $p$-cyclotomic character. For any $\sigma\in G_F(p)$, $\chi_{p,cycl}(\sigma)$ is determined by the condition that
\[
\sigma(\xi)= \xi^{\chi_{p,cycl}(\sigma)}, \quad \forall \xi\in \mu_{p^\infty}.
\]

\begin{prop}
\label{prop:surjective}
Let $\sI=\sI(\chi_{p,cycl})$.  Then for each $i\geq 1$, the canonical homomorphism 
\[
H^1(G_F(p),\sI/p^m\sI) \to H^1(G_F(p),\sI/p\sI)
\]
 is surjective.
\end{prop}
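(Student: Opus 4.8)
The plan is to identify the modules $\sI/p^n\sI$ with roots of unity and thereby reduce the whole question to ordinary Kummer theory over $F$. First I would observe that, by the very definition of $\chi_{p,cycl}$, the assignment $a\mapsto \zeta_{p^n}^a$ gives an isomorphism of $G_F(p)$-modules $\sI/p^n\sI\simeq \mu_{p^n}$ for every $n\geq 1$, under which the reduction $\sI/p^n\sI\surj \sI/p\sI$ corresponds to the power map $\mu_{p^n}\to\mu_p$, $\zeta\mapsto \zeta^{p^{n-1}}$. Thus it suffices to prove that the induced map $H^1(G_F(p),\mu_{p^n})\to H^1(G_F(p),\mu_p)$ is surjective for all $n\geq 1$.

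The key step is to compare the cohomology of $G_F(p)$ with that of the full absolute Galois group $G_F$. Let $F(p)$ denote the maximal pro-$p$ extension of $F$, and set $N=\Gal(F_{sep}/F(p))$, so that $G_F(p)=G_F/N$. Since $\chi_{p,cycl}$ factors through $G_F(p)$, the subgroup $N$ acts trivially on $\mu_{p^n}$, and the inflation–restriction sequence in degree one reads
\[
0\to H^1(G_F(p),\mu_{p^n})\to H^1(G_F,\mu_{p^n})\to \Hom(N,\mu_{p^n})^{G_F(p)}.
\]
I would then argue that the right-hand term vanishes. Indeed $\Hom(N,\mu_{p^n})=\Hom(N,\Z/p^n\Z)$ factors through the maximal pro-$p$ quotient of $N=\Gal(F_{sep}/F(p))$, which is trivial: every element of $F(p)^\times$ is already a $p$-th power in $F(p)$, because the Galois closure over $F$ of $F(p)(\sqrt[p]{a})$ is again a $p$-extension (as $\mu_p\subseteq F(p)$), hence contained in $F(p)$; so $F(p)$ admits no cyclic degree-$p$ extension. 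Consequently inflation gives an isomorphism $H^1(G_F(p),\mu_{p^n})\xrightarrow{\ \sim\ }H^1(G_F,\mu_{p^n})$ for all $n$.

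Finally, Kummer theory identifies $H^1(G_F,\mu_{p^n})\simeq F^\times/(F^\times)^{p^n}$, and under this identification the reduction map becomes the natural projection $F^\times/(F^\times)^{p^n}\to F^\times/(F^\times)^p$, which is manifestly surjective. Transporting this through the commutative square whose vertical arrows are the inflation isomorphisms just established yields the surjectivity of $H^1(G_F(p),\mu_{p^n})\to H^1(G_F(p),\mu_p)$, and hence of $H^1(G_F(p),\sI/p^n\sI)\to H^1(G_F(p),\sI/p\sI)$, as required. I expect the one genuinely nontrivial input to be the vanishing of the maximal pro-$p$ quotient of $\Gal(F_{sep}/F(p))$ (equivalently, the fact that inflation is an isomorphism); once that is in hand, the identification with $\mu_{p^n}$ and the passage to Kummer theory are formal.
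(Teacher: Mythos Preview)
Your argument is correct and follows the same overall strategy as the paper: identify $\sI/p^n\sI$ with $\mu_{p^n}$ via the cyclotomic character, reduce to Kummer theory, and use the evident surjectivity of $F^\times/(F^\times)^{p^n}\to F^\times/(F^\times)^p$. The one difference is in how the Kummer isomorphism for $G_F(p)$ is obtained. The paper works directly with the $G_F(p)$-module $F(p)^\times$: since $F(p)$ is $p$-closed, the $p^m$-th power map on $F(p)^\times$ is surjective, so the short exact sequence $0\to\mu_{p^m}\to F(p)^\times\to F(p)^\times\to 0$ together with Hilbert~90 for $\Gal(F(p)/F)$ gives $H^1(G_F(p),\mu_{p^m})\simeq F^\times/(F^\times)^{p^m}$ in one stroke. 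You instead first apply ordinary Kummer theory over $F_{sep}$ to compute $H^1(G_F,\mu_{p^n})$ and then transport the result along the inflation isomorphism, which you justify via inflation--restriction and the vanishing of $\Hom(N,\mu_{p^n})$. Both routes hinge on the same fact (that $F(p)$ admits no degree-$p$ extension), so the difference is purely organizational; the paper's version is marginally shorter, while yours makes the role of the full absolute Galois group more explicit.
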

\begin{proof}
Let $F(p)$ be the compositum of all finite Galois extensions of $F$ whose degree is a power of $p$. We have $G_F(p)=\Gal(F(p)/F)$. 

Recalling that we are choosing a compatible system of the primitive $p^n$th roots of unity, we obtain an isomorphism $\mu_{p^\infty}\simeq \sI(\chi_{p,cycl})$ as a $G_F(p)$-module. From this and from the exact squence
\[
0\to \mu_{p^m}\to F(p)^\times \stackrel{p^m}{\to} F(p)^\times\to 0,
\]
we obtain a commutative diagram
\[
\xymatrix{
F^\times/{F^\times}^{p^m} \ar@{->}[r] \ar@{->}[d]& H^1(G_F(p),\mu_{p^m}) \ar@{->}[r] \ar@{->}[d] &H^1(G_F(p),\sI/p^m\sI) \ar@{->}[d]\\
F^\times/{F^\times}^{p} \ar@{->}[r]& H^1(G_F(p),\mu_{p}) \ar@{->}[r]  &H^1(G_F(p),\sI/p\sI)
}
\]
for $m\geq 1$. Since the horizontal arrows are all isomorphisms and $F^\times/{F^\times}^{p^m}\to F^\times/{F^\times}^{p}$ is surjective, we see that $H^1(G_F(p),\sI/p^m\sI) \to H^1(G_F(p),\sI/p\sI)$ is surjective.
\end{proof}

\begin{cor}
\label{cor:existence of a}
  Let  $F$ be a field containing $\zeta_p$.  Assume that $\{x\}\sqcup \{y_i\}_{i\in I}$ is a minimal system of generators for $G_F(p)$. Then for every $m\geq 1$, there exists $a\in F^\times$ and a  $p^m$-th root $\sqrt[p^m]{a}$ of $a$ such that
\[
x(\sqrt[p^m]{a})=\zeta_{p^m}\sqrt[p^m]{a} \quad \text{ and } y_i(\sqrt[p^m]{a})= \sqrt[p^m]{a} \quad \forall i\in I.
\]
\end{cor}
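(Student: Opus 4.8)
The plan is to translate the cohomological surjectivity of Proposition~\ref{prop:surjective} into the concrete Kummer-theoretic statement about the action of generators on a radical. First I would invoke Lemma~\ref{lem:Labute Prop 6} with $G = G_F(p)$, $\chi = \chi_{p,cycl}$, and $\sI = \sI(\chi_{p,cycl})$: Proposition~\ref{prop:surjective} verifies hypothesis (1), so conclusion (2) holds, namely for each $m\geq 1$ we may prescribe arbitrarily the values on a minimal system of generators of any crossed homomorphism of $G_F(p)$ into $\sI/p^m\sI$, subject only to the values being $0$ for all but finitely many generators. Applied to the given minimal generating system $\{x\}\sqcup\{y_i\}_{i\in I}$, I would choose the crossed homomorphism $c$ determined by $c(x)=1$ and $c(y_i)=0$ for all $i\in I$ (this prescription is admissible since only finitely many — in fact one — value is nonzero).

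Next I would run the Kummer-theoretic dictionary in reverse. Since we have fixed a compatible system of primitive $p^n$-th roots of unity, there is an isomorphism $\mu_{p^m}\simeq \sI/p^m\sI$ of $G_F(p)$-modules, so $c$ may be viewed as a crossed homomorphism (i.e.\ a $1$-cocycle) $G_F(p)\to \mu_{p^m}$. Under the Kummer isomorphism $F^\times/(F^\times)^{p^m}\xrightarrow{\sim} H^1(G_F(p),\mu_{p^m})$ — the top horizontal map of the diagram in Proposition~\ref{prop:surjective}, which is an isomorphism — the cohomology class $[c]$ corresponds to a class $[a]$ for some $a\in F^\times$. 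I would then recall the explicit form of this isomorphism: choosing a $p^m$-th root $\sqrt[p^m]{a}$, the associated cocycle sends $\sigma\in G_F(p)$ to the root of unity $\sigma(\sqrt[p^m]{a})/\sqrt[p^m]{a}$, expressed as a power of $\zeta_{p^m}$ via the fixed compatible system. Matching this against our chosen $c$ gives exactly $x(\sqrt[p^m]{a})=\zeta_{p^m}\sqrt[p^m]{a}$ and $y_i(\sqrt[p^m]{a})=\sqrt[p^m]{a}$ for all $i$.

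One technical point I would address carefully: $c$ is a priori only a cocycle on $G_F(p)$, whereas the Kummer element $a$ and its roots live over the full absolute Galois situation, so I must make sure the cocycle/coboundary correspondence is taken consistently at the level of $G_F(p)$ acting on $\mu_{p^m}\subseteq F(p)^\times$, and that replacing $\sqrt[p^m]{a}$ by another $p^m$-th root only changes the cocycle by a coboundary, which does not affect the class $[a]$. The main obstacle, and the step deserving the most care, is pinning down the precise normalization of the Kummer isomorphism so that the chosen values $c(x)=1$, $c(y_i)=0$ translate into the stated multiplier $\zeta_{p^m}$ on $x$ and triviality on each $y_i$; this is where the compatible choice $\zeta_{p^m}^p=\zeta_{p^{m-1}}$ and the identification $\mu_{p^\infty}\simeq \sI(\chi_{p,cycl})$ must be used to guarantee that the cocycle value $1\in \sI/p^m\sI$ corresponds to the multiplier $\zeta_{p^m}$ rather than some other primitive root. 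Everything else is a routine unwinding of definitions.
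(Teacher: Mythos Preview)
Your proposal is correct and follows essentially the same route as the paper: invoke Lemma~\ref{lem:Labute Prop 6} together with Proposition~\ref{prop:surjective} to manufacture the crossed homomorphism with the prescribed values, then translate it into a Kummer element. The only difference is in the last step: the paper bypasses the Kummer isomorphism as a black box and instead applies Hilbert's Theorem~90 directly, writing the cocycle $D$ (viewed in $F(p)^\times$) as $\sigma\mapsto \sigma(\alpha)/\alpha$ for some $\alpha\in F(p)^\times$, and then observes that $a:=\alpha^{p^m}\in F^\times$ because $D$ takes values in $\mu_{p^m}$. This gives the exact equality $D(\sigma)=\sigma(\alpha)/\alpha$ in one stroke, whereas your route through $H^1$ first produces only the cohomology class and then requires the extra adjustment of the chosen $p^m$-th root to absorb the coboundary ambiguity; both arguments are the same Hilbert~90 content, but the paper's version is slightly more direct.
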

\begin{proof}
By   Lemma~\ref{lem:Labute Prop 6} and Proposition~\ref{prop:surjective}, there exists a crossed homomorphism $D\colon G_F(p)\to \mu_{p^m}$ such that
\[
D(x)=\zeta_{p^m} \quad\text{ and } D(y_i)=1 \quad \forall i\in I.
\]
Consider $D$ as a cocycle with values in $F(p)^\times$, then $D$ is a 1-coboundary by Hilbert's  Theorem 90. Thus there exists $\alpha\in F(p)^\times$ such that  $D(\sigma)=\sigma(\alpha)/\alpha$ for all $\sigma\in G_F(p)$. Since $\sigma(\alpha)/\alpha\in \mu_{p^m}$ for all $\sigma\in G_F(p)$, we see that $\alpha^{p^m}=:a$ is in $F^\times$. 
\end{proof}

The following theorem is a generalization of \cite[Theorem 5.1.2]{Ro} based on the same idea.

\begin{thm}
\label{thm:l<m} 
Let $F$ be a field containing $\zeta_{p^m}$ for some $m\geq 2$. Let $S$ be a free pro-$p$-group on a set of generators $X=\{x \}\cup \{y_i\mid i \in I\}$ such that
\[
1\longrightarrow R\longrightarrow S\stackrel{\pi}{\longrightarrow} G_F(p) \longrightarrow 1
\]
is a minimal presentation of $G_F(p)$.  Let $T$ be the closed subgroup of $S$ generated by $\{y_i\}_{i\in I}$.
Then there is no relation of the form $r=x^{p^lu} s\in R$, where $l$ and $u$ are integers with  $1\leq l<m$, $\gcd(p,u)=1$, and $s\in [S,S]T$.
\end{thm}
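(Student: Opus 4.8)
The plan is to manufacture a single continuous homomorphism $\Phi\colon S\to \Z/p^m\Z$ that detects the power $x^{p^lu}$ but annihilates all of $[S,S]T$, and then to read off a numerical contradiction from $r\in R=\ker\pi$.

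First I would apply Corollary~\ref{cor:existence of a} to the minimal generating system $\{\pi(x)\}\sqcup\{\pi(y_i)\}_{i\in I}$ of $G_F(p)$ coming from the minimal presentation. This produces an element $a\in F^\times$ and a chosen root $\sqrt[p^m]{a}$ with $\pi(x)(\sqrt[p^m]{a})=\zeta_{p^m}\sqrt[p^m]{a}$ and $\pi(y_i)(\sqrt[p^m]{a})=\sqrt[p^m]{a}$ for all $i$. Here the hypothesis $\zeta_{p^m}\in F$ is the crucial input: since every $\sigma\in G_F(p)$ fixes $\zeta_{p^m}$, the assignment $\phi(\sigma)$ defined by $\sigma(\sqrt[p^m]{a})=\zeta_{p^m}^{\phi(\sigma)}\sqrt[p^m]{a}$ is a genuine group homomorphism $\phi\colon G_F(p)\to\Z/p^m\Z$ (the Kummer pairing), and not merely a crossed homomorphism. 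Indeed, for $\sigma,\rho\in G_F(p)$ one computes $(\sigma\rho)(\sqrt[p^m]{a})=\zeta_{p^m}^{\phi(\sigma)+\phi(\rho)}\sqrt[p^m]{a}$ precisely because $\sigma(\zeta_{p^m})=\zeta_{p^m}$.

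Next I would set $\Phi:=\phi\circ\pi\colon S\to\Z/p^m\Z$, so that $\Phi(x)=1$ and $\Phi(y_i)=0$ for all $i\in I$. Because the target is abelian, $\Phi$ kills every commutator and hence vanishes on the closed subgroup $[S,S]$; because $\Phi(y_i)=0$, it also vanishes on the closed subgroup $T$ generated by the $y_i$. Consequently $\Phi$ annihilates all of $[S,S]T$, and in particular $\Phi(s)=0$ for our $s\in[S,S]T$.

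Finally, assuming for contradiction that $r=x^{p^lu}s\in R$, I would evaluate $\Phi(r)$ in two ways. On one hand, $r\in R=\ker\pi$ gives $\Phi(r)=\phi(\pi(r))=\phi(1)=0$. On the other hand, working additively in $\Z/p^m\Z$, $\Phi(r)=p^lu\cdot\Phi(x)+\Phi(s)=p^lu$. Hence $p^lu\equiv 0\pmod{p^m}$, which is impossible since $\gcd(p,u)=1$ forces $v_p(p^lu)=l<m$. I expect no real obstacle in the argument once Corollary~\ref{cor:existence of a} is available; the only genuinely delicate point — and the reason the theorem requires $\zeta_{p^m}\in F$ rather than merely $\zeta_p\in F$ — is that the containment $\zeta_{p^m}\in F$ upgrades the Kummer cocycle to an honest homomorphism $\phi$, which is exactly what allows $\Phi$ to kill $[S,S]$ and thereby isolate the exponent $p^lu$ modulo $p^m$.
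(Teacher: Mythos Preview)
Your proof is correct and is essentially the paper's own argument, just phrased in slightly different language: the paper uses the restriction map $\res\colon G_F(p)\twoheadrightarrow \Gal(F(\sqrt[p^m]{a})/F)\simeq \Z/p^m\Z$ in place of your Kummer homomorphism $\phi$, but these are the same map, and the contradiction is derived identically. Your explicit remark that $\zeta_{p^m}\in F$ is exactly what upgrades the cocycle to a homomorphism (so that $[S,S]$ is killed) is the key point the paper leaves implicit in the single clause ``Since $\zeta_{p^m}\in F^\times$, $F(\sqrt[p^m]{a})/F$ is a Galois extension with Galois group $\simeq \Z/p^m\Z$.''
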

\begin{proof}
Suppose to the contrary that there is a relation $r=x^{p^lu} s$,  where $l$ and $u$ are nonzero integers with  $1\leq l<m$, $\gcd(p,u)=1$ and $s\in [S,S]T$.
  By Corollary~\ref{cor:existence of a},  we can choose $a\in F^\times$ such that 
 \[
 \pi(x)(\sqrt[p^m]{a})=\zeta_{p^m}\sqrt[p^m]{a},\quad \pi(y_i)(\sqrt[p^m]{a})=\sqrt[p^m]{a}, \forall i\in I.
\]
Since $\zeta_{p^m}\in F^\times$,  $F(\sqrt[p^m]{a})/F$ is a Galois extension with Galois group $\Gal(F(\sqrt[p^m]{a})/F)\simeq \Z/p^m\Z$.  Let ${\rm res}\colon G_F(p)\surj \Gal(F(\sqrt[p^m]{a})/F)\simeq \Z/p^m\Z$ be the restriction map. We have
\[
1= {\rm res}(\pi(r))= {\rm res}(\pi(x)^{p^lu}\pi(s))={\rm res}(\pi(x))^{p^lu},
\]
since ${\rm res}(\pi(s))=1$ for $s\in [S,S]T$. Hence the order $p^m$ of ${\rm res}\pi(x)$ divides $p^lu$. This is impossible since $m>l$.
\end{proof}

\begin{cor}
\label{cor:infinity} 
Let $F$ be a field containing $\mu_{p^\infty}$. Let $S$ be a free pro-$p$-group on a set of generators $X=\{x \}\cup \{y_i\mid i \in I\}$ such that
\[
1\longrightarrow R\longrightarrow S\stackrel{\pi}{\longrightarrow} G_F(p) \longrightarrow 1
\]
is a minimal presentation of $G_F(p)$.  Let $T$ be the closed subgroup of $S$ generated by $\{y_i\}_{i\in I}$.
Then there is no relation of the form $r=x^{p^lu} s\in R$, where $l$ and $u$ are nonzero integers with $l\geq 1$, and $s\in [S,S]T$.
\end{cor}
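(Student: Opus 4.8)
The plan is to deduce Corollary~\ref{cor:infinity} directly from Theorem~\ref{thm:l<m}, exploiting the fact that the hypothesis $\mu_{p^\infty}\subseteq F$ frees us to invoke that theorem with $m$ as large as we please. The only genuine differences between the two statements are that here $u$ is merely required to be a nonzero integer rather than coprime to $p$, and that $F$ now contains $\zeta_{p^m}$ for \emph{every} $m$ instead of for a single fixed one. So essentially no new cohomological input is needed: all the real work has already been packaged into Theorem~\ref{thm:l<m} (via Corollary~\ref{cor:existence of a} and the restriction to the cyclic quotient $\Z/p^m\Z$).

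First I would reduce to the coprime case by extracting the $p$-part of $u$. Suppose, toward a contradiction, that $r=x^{p^lu}s\in R$ with $l\geq 1$, $u\neq 0$, and $s\in[S,S]T$. Write $u=p^ju'$ with $j\geq 0$ and $\gcd(p,u')=1$. Then in $S$ we have the identity $x^{p^lu}=x^{p^{l+j}u'}$, so, setting $l'=l+j$, the very same element $r$ has the form $r=x^{p^{l'}u'}s$ with $l'\geq 1$ and $\gcd(p,u')=1$.

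Next I would choose $m$ large enough to violate the cyclic quotient count. Take $m=l'+1$; then $m\geq 2$ and $1\leq l'<m$. Because $F$ contains $\mu_{p^\infty}$, it contains $\zeta_{p^m}$, so all the hypotheses of Theorem~\ref{thm:l<m} are met for this $m$. That theorem asserts that there is no relation of the form $x^{p^{l'}u'}s\in R$ with $1\leq l'<m$, $\gcd(p,u')=1$, and $s\in[S,S]T$, which contradicts the existence of $r$. This completes the argument.

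The step to watch is the interaction between the two weakened hypotheses rather than any single analytic difficulty. Theorem~\ref{thm:l<m} demands $\gcd(p,u)=1$, whereas the corollary allows any nonzero $u$; the reduction above shows this is harmless precisely because any power of $p$ dividing $u$ can be absorbed into the exponent $p^l$, enlarging $l$ to $l'$. The reason this absorption does no damage is the freedom to enlarge $m$ afforded by $\mu_{p^\infty}\subseteq F$: no matter how large $l'$ becomes, one can still arrange $l'<m$ and thereby keep the constraint of Theorem~\ref{thm:l<m} satisfiable.
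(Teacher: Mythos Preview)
Your proposal is correct and follows exactly the approach the paper intends: the paper's own proof is the single sentence ``This follows immediately from Theorem~\ref{thm:l<m},'' and your argument simply spells out that immediacy by absorbing the $p$-part of $u$ into the exponent and then choosing $m=l'+1$, which the hypothesis $\mu_{p^\infty}\subseteq F$ permits. There is nothing to add or correct.
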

\begin{proof}
This follows immediately from Theorem~\ref{thm:l<m}.
\end{proof}

Let $S$ be a free pro-$p$-group on a set of generators $X=\{x \}\cup \{y_i\mid i \in I\}$ such that
\[
1\longrightarrow R\longrightarrow S\stackrel{\pi}{\longrightarrow} G_F(p) \longrightarrow 1
\]
is a minimal presentation of $G_F(p)$.

\begin{lem} 
\label{lem:act trivially} 
Let $F$ be  a field satisfying Condition~\ref{cond}. 
 Suppose that $r=x^{p^lu} s\in R$, where $s$ is in $[S,S]$ and $l$ and $u$ are nonzero integers with $l\geq 1$ and $\gcd(p,u)=1$.
  Then $\pi(x)$ acts trivially on $F(\zeta_{p^n})$ for all $n\in \N$.
 \end{lem}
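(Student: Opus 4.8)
The plan is to transport the relation $r=x^{p^lu}s$ into $G_F(p)$ and then test it against the $p$-cyclotomic character, using that $s$ is a product of commutators. Since $r\in R=\ker\pi$, applying $\pi$ yields $\pi(x)^{p^lu}\pi(s)=1$ in $G_F(p)$. Let $\chi=\chi_{p,cycl}\colon G_F(p)\to\U_p$ be the $p$-cyclotomic character. Because $s\in[S,S]$ we have $\pi(s)\in[G_F(p),G_F(p)]$, and as $\chi$ is a homomorphism into the abelian group $\U_p$ it vanishes on commutators, so $\chi(\pi(s))=1$. Consequently
\[
\chi(\pi(x))^{p^lu}=\chi\bigl(\pi(x)^{p^lu}\bigr)=\chi(\pi(r))=1,
\]
which exhibits $\chi(\pi(x))$ as a torsion element of $\U_p=\Z_p^\times$ whose order divides $p^lu$.

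The next step is to rule out nontrivial torsion in the relevant part of $\Z_p^\times$, and this is exactly where Condition~\ref{cond} is used. For $p$ odd, $\zeta_p\in F$ forces every element of $G_F(p)$ to fix $\zeta_p$, so $\chi(\sigma)\equiv 1\pmod p$ for all $\sigma$, i.e. $\im\chi\subseteq 1+p\Z_p$. For $p=2$, the hypothesis $\zeta_4\in F$ gives $\chi(\sigma)\equiv 1\pmod 4$, so $\im\chi\subseteq 1+4\Z_2$. In either case the ambient subgroup of $\Z_p^\times$ is torsion-free (it is isomorphic to $\Z_p$ via the $p$-adic logarithm). Hence the torsion element $\chi(\pi(x))$ must equal $1$.

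Finally, $\chi(\pi(x))=1$ means $\pi(x)$ fixes every $\xi\in\mu_{p^\infty}$, so $\chi(\pi(x))\equiv 1\pmod{p^n}$ for every $n$; since $F(\zeta_{p^n})\subseteq F(p)$ by Lemma~\ref{lem:cyclic} and the action of $\pi(x)$ on $\zeta_{p^n}$ is given by $\chi(\pi(x))$, the element $\pi(x)$ acts trivially on $F(\zeta_{p^n})$ for all $n\in\N$, as claimed. The \textbf{main obstacle} is the torsion-freeness step: one must observe that $1+2\Z_2$ does contain the $2$-torsion element $-1$, so for $p=2$ the argument genuinely requires the $\zeta_4$ clause of Condition~\ref{cond} to push the image into $1+4\Z_2$; the rest is a routine application of Hilbert~90-style bookkeeping already encoded in $\chi_{p,cycl}$.
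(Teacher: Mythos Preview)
Your proof is correct and notably cleaner than the paper's. You push the relation through the $p$-cyclotomic character $\chi_{p,cycl}$ in one stroke: commutators die, so $\chi(\pi(x))$ is torsion in $1+p\Z_p$ (resp.\ $1+4\Z_2$), which is torsion-free, forcing $\chi(\pi(x))=1$.

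The paper instead argues by induction on $n$: assuming $\pi(x)$ fixes $\zeta_{p^{n-1}}$ but not $\zeta_{p^n}$, the restriction of $\pi(x)$ generates $\Gal(F(\zeta_{p^n})/F(\zeta_{p^{n-1}}))\simeq C_p$, and then by Burnside's basis theorem it generates every $\Gal(F(\zeta_{p^{n+n'-1}})/F(\zeta_{p^{n-1}}))\simeq C_{p^{n'}}$; choosing $n'>l$ and restricting the relation to this cyclic quotient yields an order contradiction. So the paper works entirely inside finite cyclic quotients and never invokes the structure of $\Z_p^\times$ or the $p$-adic logarithm. Your argument buys brevity and conceptual clarity by packaging the whole cyclotomic tower into $\chi_{p,cycl}$ at once; the paper's buys self-containment (no appeal to the torsion-freeness of $1+p\Z_p$) and stays closer to the explicit Galois-extension viewpoint used in the surrounding results. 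One cosmetic point: the reference to Lemma~\ref{lem:cyclic} for $F(\zeta_{p^n})\subseteq F(p)$ is slightly oblique --- this containment is really already built into the paper's definition of $\chi_{p,cycl}$ as a character on $G_F(p)$.
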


\begin{proof}
If $\mu_{p^\infty}\subset F^\times$ then clearly $\pi(x)$ acts trivially on $F(\zeta_{p^n})$ for all $n\in \N$.

Now we assume that $\mu_{p^\infty}\not\subset F^\times$. Let $k$ be a positive integer such that  $\zeta_{p^k}\in F^\times$ but $\zeta_{p^{k+1}}\not\in F^\times$. 
   We proceed by induction on $n$.    
   If $n\leq k$ then $\pi(x)\in G_F(p)$ acts trivially on $\zeta_{p^n}$ since $\zeta_{p^n}=\zeta_{p^k}^{p^{k-n}}\in F$.  

Now suppose that $n>k$ and that $\pi(x)$ acts trivially on $\zeta_{p^{n-1}}$ but $\pi(x)$ acts non-trivially on $\zeta_{p^n}$.
Then the restriction of $\pi(x)$ to $F(\zeta_{p^n})$ generates the entire Galois group $\Gal(F(\zeta_{p^n})/F(\zeta_{p^{n-1}}))\simeq C_p$. Hence, by Burnside's basis theorem, the restriction of $\pi(x)$ to $F(\zeta_{p^{n+n^\prime}})$ generates the entire Galois group $\Gal(F(\zeta_{p^{n+n^\prime-1}})/F(\zeta_{p^{n-1}}))\simeq C_{p^{n^\prime}}$ for every $n=1,2,\ldots$ (Note that $F(\zeta_{p^{n-1}})$ also satisfies Condition~\ref{cond}, hence $\Gal(F(\zeta_{p^{n+n^\prime-1}})/F(\zeta_{p^{n-1}}))$ is indeed cyclic by Lemma~\ref{lem:cyclic}.)
Pick any $n^\prime >l$ and consider the restriction map $\rho\colon G_F(p)\surj \Gal(F(\zeta_{p^{n+n^\prime-1}})/F(\zeta_{p^{n-1}}))$. One has
\[
1= \rho(\pi(r))=\rho(\pi(x))^{{p^l}u} \rho(\pi(s))= \rho(\pi(x))^{{p^l}u}.
\]
Hence the order of $\rho(\pi(x))$ divides $p^l$. 
 This contradicts  the fact that the order of $\rho(\pi(x))$ is $p^{n^\prime}>p^l$.
\end{proof}

Let $S$ be a free pro-$p$ group on an alphabet $X$ of a minimal set of generators. We let $X^{-1}$ be the set of formal symbols $x^{-1}$, $x\in X$. For each  $r$ in  $[S,S]$, by {\it a commutator expression} for $r$ we mean an expression $r=c_1\cdots c_k$, where each $c_i$ is a  hyper-commutator of the form $c_i=[u_1,u_2,u_3\cdots,u_{k(i)}]$ with $u_i\in X\sqcup X^{-1}$. Here we do not specify sub-bracketing which can be arbitrary as usual when dealing with higher commutators.
We say that a commutator $[u,v]$ {\it appears} in  the commutator expression $r=c_1\cdots c_k$ for $r$ if $[u,v]$ is a sub-commutator of some hyper-commutator $c_i$. For example commutator $[u_1,u_2]$ appears in both of the elements $[[u_1,u_2],u_3]$ and $[[u_1,u_2],[u_3,u_4]]$.

The following theorem is a generalization of \cite[Theorem 5.2.1]{Ro} and our proof is based on the same idea.

\begin{thm}
\label{thm:1} 
Let $F$ be a field satisfying Condition~\ref{cond}. Let $S$ be a free pro-$p$-group on a set of generators $X=\{x \}\cup \{y_i\mid i \in I\}$ such that
\[
1\longrightarrow R\longrightarrow S\stackrel{\pi}{\longrightarrow} G_F(p) \longrightarrow 1
\]
is a minimal presentation of $G_F(p)$.
Then there is no relation of the form $r=x^{p^lu} s\in R$, where $l$  and $u$ are nonzero integers with $l\geq 1$ and $\gcd(p,u)=1$, and $s\in [S,S]$ 
 such that any commutator of the form $[u,v]$  ($u,v\in X\sqcup X^{-1}$) appearing in a fixed commutator expression for $s$ has $u\not=x^{\pm1}$ and $v\not= x^{\pm1}$.
\end{thm}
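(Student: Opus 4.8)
The plan is to restrict the putative relation to the Galois group of a radical extension $F(a,m)$ from Section~2 with $m$ chosen larger than $l$, and to show that the commutator hypothesis forces the image of $s$ to be trivial there, whereas the image of $x^{p^lu}$ is not.

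First I would pin down the images of the generators. Assume such a relation $r=x^{p^lu}s$ exists. Since $s\in[S,S]$, Lemma~\ref{lem:act trivially} shows that $\pi(x)$ acts trivially on $\zeta_{p^n}$ for every $n$. Fix $m>l$ with $m\geq k$ and $m\geq 2$, and apply Corollary~\ref{cor:existence of a} to obtain $a\in F^\times$ and a root $\sqrt[p^m]{a}$ with $\pi(x)(\sqrt[p^m]{a})=\zeta_{p^m}\sqrt[p^m]{a}$ and $\pi(y_i)(\sqrt[p^m]{a})=\sqrt[p^m]{a}$ for all $i$; the nontriviality of the $\pi(x)$-action guarantees $[a]\notin\langle[\zeta_{p^k}]\rangle$, so the results of Section~2 apply to $a$. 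Let $\phi=\res\circ\pi\colon S\to G(a,m)$ be the composite of $\pi$ with restriction to $\Gal(F(a,m)/F)$. Because $\pi(x)$ fixes $\zeta_{p^m}$ and multiplies $\sqrt[p^m]{a}$ by $\zeta_{p^m}$, it agrees with the element $\tau$ of Proposition~\ref{prop:presentation of G(a,m)}; thus $\phi(x)=\tau$, which has order $p^m$. Because each $\pi(y_i)$ fixes $\sqrt[p^m]{a}$, each $\phi(y_i)$ lies in $\Gal(F(a,m)/F(\sqrt[p^m]{a}))=\langle\sigma\rangle$, an abelian (indeed cyclic) subgroup.

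The heart of the argument is to prove $\phi(s)=1$, which I would do by induction on the bracketing tree of the fixed commutator expression $s=c_1\cdots c_k$, showing $\phi(c_i)=1$ for each hyper-commutator $c_i$. At a lowest internal node one has a two-fold commutator $[u,v]$ with $u,v\in X\sqcup X^{-1}$; the hypothesis on $s$ forbids $u$ and $v$ from equalling $x^{\pm1}$, so $\phi(u),\phi(v)\in\langle\sigma\rangle$ commute and $\phi([u,v])=1$. At any higher internal node at least one child is itself an internal node, which maps to $1$ by induction; since $\phi$ is a homomorphism and $[\,\ast,1]=[1,\ast]=1$, that node maps to $1$ as well. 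Hence each $c_i$, and therefore $s$, lies in the kernel of $\phi$.

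With $\phi(s)=1$ the contradiction follows at once: as $r\in R=\ker\pi$ we get $1=\phi(r)=\phi(x)^{p^lu}\phi(s)=\tau^{p^lu}$, so $p^m=\mathrm{ord}(\tau)$ divides $p^lu$; since $\gcd(p,u)=1$ this forces $m\leq l$, contradicting $m>l$. The main obstacle is precisely the inductive step above: one must verify that the hypothesis, which constrains only the innermost letter-pairs, propagates through an arbitrary unspecified bracketing, and in particular that occurrences of $x$ at higher levels do no harm because a commutator with an already-trivial entry is trivial. (The case $\mu_{p^\infty}\subseteq F$ excluded from Section~2 is easier still: there $F(\sqrt[p^m]{a})/F$ is cyclic, $\phi(x)$ generates it, and $\phi(s)=1$ holds automatically since $[S,S]$ dies in an abelian quotient.)
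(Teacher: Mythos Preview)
Your proof is correct and follows essentially the same route as the paper's: dispose of the case $\mu_{p^\infty}\subseteq F$ (your final parenthetical is the content of Corollary~\ref{cor:infinity}), apply Lemma~\ref{lem:act trivially} and Corollary~\ref{cor:existence of a} to produce $a$ with $[a]\notin\langle[\zeta_{p^k}]\rangle$, and restrict to $G(a,m)$ with $m>\max\{k,l\}$ to obtain the contradiction $\tau^{p^lu}=1$. Your structural induction showing $\phi(s)=1$ is exactly the observation the paper compresses into one line, namely that the innermost letter-commutators land in the abelian subgroup $\Gal(F(a,m)/F(\sqrt[p^m]{a}))=\langle\sigma\rangle$, whence every hyper-commutator built above them collapses.
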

\begin{proof} 
Suppose to the contrary that there is a relation $r=x^{p^lu} s$, where $l$  and $u$ are nonzero integers with $l\geq 1$ and $\gcd(p,u)=1$, and $s\in [S,S]$.

By Corollary~\ref{cor:infinity}, we may also suppose that there exists $k\in \N$ such that $\zeta_{p^k}\in F^\times$ but $\zeta_{p^{k+1}}\not \in F^\times$.

We take any $m>\max\{k,l\}$ and choose an element $a\in F^\times$ such that
 \[
x(\sqrt[p^m]{a})=\zeta_{p^m}\sqrt[p^m]{a} \quad \text{ and } y_i(\sqrt[p^m]{a})= \sqrt[p^m]{a} \quad \forall i\in I.
\]
  Such an element $a$ exists by Corollary~\ref{cor:existence of a}. 
  By Lemma~\ref{lem:act trivially}, $\pi(x)$ acts trivially on $\zeta_{p^n}$ for every $n\in \N$. In particular, we see that  $a\not\in (F^\times)^p \zeta_{p^k}^n$ for every $n \in \Z$.
      We consider the Galois extension $F(a,m)/F$. 
 Let ${\rm res}\colon G_F(p)\surj \Gal(F(a,m)/F)$ be the restriction map.
  Clearly the order of ${\rm res}(\pi(x))\in \Gal(F(a,m)/F(\zeta_{p^m}))$ is  $p^m$. 
 By our choice of $a$, for each $i\in I$, 
 \[
 {\rm res}(\pi(y_i)) \text{ is in }\Gal(F(a,m)/F(\sqrt[p^m]{a}))\simeq C_{p^{m-k}}.
 \] 
 Since $\Gal(F(a,m)/F(\sqrt[p^m]{a}))$ is commutative, we see that ${\rm res}([\pi(y_i^{\pm 1}), \pi(y_j^{\pm1})])=1$ for all $i,j\in I$. Hence  one has ${\rm res}(s)=1$. Therefore we have
 \[
 1={\rm res}(\pi(r))= {\rm res}(\pi(x)^{p^lu}\pi(s))={\res}(\pi(x))^{p^lu},
 \]
 a contradiction to the fact that the order of ${\res}(\pi(x))$ is $p^m>p^l$.
\end{proof}

\begin{rmk} 
Let the notation be as in Theorem~\ref{thm:1}. Let $T$ be the closed subgroup of $S$ generated by $\{y_i\}_{i\in I}$. Clearly if $s\in [T,T]$ then  any commutator of the form $[u,v]$  ($u,v\in X\sqcup X^{-1}$) appearing in a fixed commutator expression for $s$ has $u\not=x^{\pm 1}$ and $v\not= x^{\pm1}$. 

For example, let  $n$ be an odd positive integer. Let $S$ be a free pro-$p$ group of generators $x_1,x_2,\ldots,x_n$ and let 
\[
r=x_1^{p^s}[x_2,x_3]\cdots [x_{n-1},x_n]
\]
with $s\in \N$. Then Theorem~\ref{thm:1} implies that $G$ is not isomorphic to $G_F(p)$ for every field $F$ satisfying Condition~\ref{cond}.
\end{rmk}

\begin{thm}
\label{thm:T}
Let $F$ be a field satisfying Condition~\ref{cond}. Let $S$ be a free pro-$p$-group on a set of generators $\{x \}\cup \{y_i\mid i \in I\}$ such that
\[
1\longrightarrow R\longrightarrow S\stackrel{\pi}{\longrightarrow} G_F(p) \longrightarrow 1
\]
is a minimal presentation of $G_F(p)$. Let $T$ be the (closed) subgroup of $S$ generated by $\{y_i\}_{i\in I}$. 
Then there is no relation of the form $r=x^{p^lu} s\in R$, where $l$ and $u$ are nonzero integers with $l\geq 1$ and $\gcd(p,u)=1$, and $s\in T$.
\end{thm}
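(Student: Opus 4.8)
The plan is to follow the same strategy as in the proofs of Theorem~\ref{thm:l<m} and Theorem~\ref{thm:1} — produce the radical element $a$ and restrict the relation to the extension it generates — but to carry out the final computation with the Kummer cocycle itself rather than with orders inside $G(a,m)$. This has two advantages: it needs no genericity hypothesis on $a$ (so I never have to check $[a]\notin\langle[\zeta_{p^k}]\rangle$), and it works uniformly whether or not $\pi(x)$ acts trivially on roots of unity, which matters because for $s\in T$ we can no longer invoke Lemma~\ref{lem:act trivially}.

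Suppose, for contradiction, that $r=x^{p^lu}s\in R$ with $s\in T$, $l\geq 1$ and $\gcd(p,u)=1$. First I would fix an integer $m>l$ and apply Corollary~\ref{cor:existence of a} to obtain $a\in F^\times$ and a compatible $p^m$-th root $\sqrt[p^m]{a}$ with $\pi(x)(\sqrt[p^m]{a})=\zeta_{p^m}\sqrt[p^m]{a}$ and $\pi(y_i)(\sqrt[p^m]{a})=\sqrt[p^m]{a}$ for all $i\in I$. The central object is the Kummer cocycle it determines, namely the crossed homomorphism $D\colon G_F(p)\to \sI/p^m\sI=\mu_{p^m}$ given by $D(g)=g(\sqrt[p^m]{a})/\sqrt[p^m]{a}$ (this is exactly the cocycle furnished by Corollary~\ref{cor:existence of a}), where $G_F(p)$ acts on $\sI/p^m\sI$ through $\chi_{p,cycl}$. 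By construction $D(\pi(x))=\zeta_{p^m}$ is a generator of $\mu_{p^m}$ while $D(\pi(y_i))=1$ for every $i$.

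The argument then splits into two computations. For the first, $D(\pi(s))=1$: the vanishing locus $\{g:D(g)=1\}$ of a cocycle is a closed subgroup of $G_F(p)$, since $D(g)=D(h)=1$ forces $D(gh)=D(g)\cdot {}^{g}D(h)=1$ and $D(g)=1$ forces $D(g^{-1})=1$; as it contains every $\pi(y_i)$ it contains $\pi(T)$, whence $D(\pi(s))=1$. For the second, $D(\pi(x)^{p^lu})\neq 1$: writing $c=\chi_{p,cycl}(\pi(x))\in\U_p$, the cocycle identity gives $D(\pi(x)^{p^lu})=\zeta_{p^m}^{\,e}$ with $e=1+c+\cdots+c^{p^lu-1}$, equal to $(c^{p^lu}-1)/(c-1)$ when $c\neq 1$ and to $p^lu$ when $c=1$. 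Condition~\ref{cond} guarantees $c\equiv 1\pmod p$ for odd $p$ and $c\equiv 1\pmod 4$ for $p=2$, so the lifting-the-exponent lemma yields $v_p(e)=v_p(p^lu)=l$ in all cases; since $l<m$ this means $\zeta_{p^m}^{\,e}\neq 1$. Applying $D$ to $\pi(r)=\pi(x)^{p^lu}\pi(s)$ and expanding once more, $D(\pi(r))=D(\pi(x)^{p^lu})\cdot D(\pi(s))^{\chi_{p,cycl}(\pi(x)^{p^lu})}=D(\pi(x)^{p^lu})\neq 1$, which contradicts $D(\pi(r))=D(1)=1$.

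I expect the main obstacle to be the second computation, which is the only place where the proof genuinely goes beyond Theorem~\ref{thm:l<m} and Theorem~\ref{thm:1}. There, because the hypothesis $s\in[S,S]$ (resp. the commutator condition) is replaced by $s\in T$, one cannot first reduce to the case where $\pi(x)$ fixes all of $\mu_{p^\infty}$; consequently $c=\chi_{p,cycl}(\pi(x))$ may differ from $1$, and the whole contradiction hinges on controlling $v_p\bigl((c^{p^lu}-1)/(c-1)\bigr)$ independently of the cyclotomic action. The lifting-the-exponent computation — using precisely $c\equiv 1\pmod{p}$ (and $k\geq 2$, i.e. $c\equiv 1\pmod 4$, when $p=2$, from Condition~\ref{cond}) — is what forces this valuation to be exactly $l$ no matter how $\pi(x)$ acts on roots of unity, and is the technical heart of the argument.
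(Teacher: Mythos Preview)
Your proof is correct and follows essentially the same strategy as the paper: produce $a$ via Corollary~\ref{cor:existence of a}, observe that $\pi(T)$ fixes $\sqrt[p^m]{a}$, and then compute that $\pi(x)^{p^lu}$ moves $\sqrt[p^m]{a}$ by a root of unity of exact order $p^{m-l}$ via the lifting-the-exponent identity $v_p\!\bigl((c^{p^lu}-1)/(c-1)\bigr)=l$. Your cocycle packaging is a mild streamlining --- the paper works directly with the action on $\sqrt[p^m]{a}$, first invokes Corollary~\ref{cor:infinity} to fix a level $k$, and splits into the cases $c\equiv 1$ and $c\not\equiv 1 \pmod{p^m}$ --- but the substance, including the key valuation computation, is identical.
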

\begin{proof} 
Suppose to the contrary that there is a relation $r=x^{p^lu} s\in R$, where $l$ and $u$ are nonzero integers with $l\geq 1$ and $\gcd(p,u)=1$, and $s\in T$.
By Corollary~\ref{cor:infinity} we  may assume that there exists $k\in \N$ such that $\zeta_{p^k}\in F^\times$ but $\zeta_{p^{k+1}}\not \in F^\times$.
We pick any positive integer $m$ with $m>l$.  By Corollary~\ref{cor:existence of a}, there exists $a\in F^\times$ such that
\[
\pi(x)(\sqrt[p^m]{a})=\zeta_{p^m}\sqrt[p^m]{a} \text{ and  }\pi(y_i)(\sqrt[p^m]{a})=\sqrt[p^m]{a}, \text{ for all $i\in I$}.\]

 We first observe that $\pi(t)(\sqrt[p^m]{a})=\sqrt[p^m]{a}$ for any $t\in T$. Then we have
\[ 
\sqrt[p^m]{a}= \pi(r)(\sqrt[p^m]{a})=\pi(x)^{p^lu}\pi(s)(\sqrt[p^m]{a})=\pi(x)^{p^lu}(\sqrt[p^m]{a}).
\]

{\bf Case 1:} {\it $\pi(x)$ acts trivially on $\zeta_{p^m}$}. Then by induction on $n$, one has $\pi(x)^n(\sqrt[p^m]{a})=\zeta^n_{p^m}\sqrt[p^m]{a}$. In particular, one has
\[
\sqrt[p^m]{a}=\pi(x)^{p^lu}(\sqrt[p^m]{a}) =\zeta_{p^m}^{p^lu} \sqrt[p^m]{a}.
\]
This implies that $\zeta_{p^m}^{p^lu}=1$ and hence $p^m$ divides $p^lu$. This is impossible because $m>l$.

{\bf Case 2:} {\it $\pi(x)$ acts nontrivially on $\zeta_{p^m}$}. One has
\[
\pi(x)(\zeta_{p^m})^{p^{m-k}}=\pi(x)(\zeta_{p^k})=\zeta_{p^k}=\zeta_{p^m}^{p^{m-k}}.
\]
Hence $\pi(x)(\zeta_{p^m})= \zeta_{p^m} \zeta_{p^{m-k}}^v=\zeta_{p^m}^{1+p^k v}$, for some $v\in \Z$. By induction on $n$, one has
\[
\pi(x)^n(\sqrt[p^m]{a})=\zeta^{1+(1+p^kv)+\cdots+(1+p^kv)^{n-1}}_{p^m}\sqrt[p^m]{a}.
\]
Hence
\[
\sqrt[p^m]{a}=\pi(x)^{p^lu}(\sqrt[p^m]{a})=\zeta^N \sqrt[p^m]{a},
\]
where
\[
\begin{aligned}
N&= 1+(1+p^kv)+\cdots+(1+p^kv)^{p^lu-1}= \dfrac{(1+p^kv)^{p^lu}-1}{p^kv}.
\end{aligned}
\]
From this, one deduces that $p^m\mid N$. On the other hand, it can be checked that for all $\alpha\in p\Z$ with $\alpha\in 4\Z$ if $p=2$, and $n\in \N$, one has \[v_p((1+\alpha)^n-1)= v_p(\alpha)+v_p(n).\]
Therefore
\[
v_p(N) = v_p(p^kv)+v_p(p^lu)-v(p^kv)=l.
\]
This  implies that  $m\leq v_p(N)=l$, a contradiction.
\end{proof}

\begin{thm}
\label{thm:filtration}
Let $F$ be a field satisfying Condition~\ref{cond}. Let $S$ be a free pro-$p$-group on a set of generators $\{x \}\cup \{y_i\mid i \in I\}$ such that
\[
1\longrightarrow R\longrightarrow S\stackrel{\pi}{\longrightarrow} G_F(p) \longrightarrow 1
\]
is a minimal presentation of $G_F(p)$. Let $T$ be the (closed) subgroup of $S$ generated by $\{y_i\}_{i\in I}$. Then there is no relation of the form $r=x^{p^{l-1}u}s t\in R$, where $l$ and $u$ are nonzero integers with  $l\geq 2$, $\gcd(p,u)=1$,  $s\in [T,T]$ and $t\in S^{(l+1)}\cap [S,S]$.
\end{thm}
\begin{proof}
Suppose to the contrary that there is a relation $r=x^{p^{l-1}u}s t\in R$, where $l$ and $u$ are nonzero integer with  $l\geq 2$, $\gcd(p,u)=1$, $s\in [T,T]$ and $t\in S^{(l+1)}\cap [S,S]$.

By Corollary~\ref{cor:infinity}, we may assume that  there exists $k\in \N$ such that $\zeta_{p^k}\in F^\times$ but $\zeta_{p^{k+1}}\not \in F^\times$.
 By Corollary~\ref{cor:existence of a}, there exists $a\in F^\times$ such that
\[
\pi(x)(\sqrt[p^l]{a})=\zeta_{p^l}\sqrt[p^l]{a} \text{ and  }\pi(y_i)(\sqrt[p^l]{a})=\sqrt[p^l]{a}, \text{ for all $i\in I$}.\]
Let ${\rm res}\colon G_F(p)\surj \Gal(F(\sqrt[p^l]{a},\zeta_{p^l})/F)$ be the restriction map.  
By Lemma~\ref{lem:act trivially}, $\pi(x)$ acts trivially on $F(\mu_{p^\infty})$.
Then ${\rm res}(\pi(x))$ has order $p^l$. 

By our choice of $a$, ${\res}(\pi(y_i))$ is in $\Gal(F(\sqrt[p^l]{a},\zeta_{p^l})/F(\sqrt[p^l]{a}))$ , which is either trivial (if $l\leq k$) or is isomorphic to $C_{p^{l-k}}$ (if $l>k$). In either case we always have
\[\res[\pi(y_i^{\pm1}),\pi(y_j^{\pm1})]=[{\res}(\pi(y_i^{\pm1})),{\res}(\pi(y_j^{\pm1}))]=1.\]
Thus ${\rm res}(s)=1$. By Proposition~\ref{prop:exponent of G}, one has ${\rm res}(\pi(t)) \in G(a,l)^{(l+1)}=\{1\}$. Therefore
\[
1={\rm res}(\pi(r))= {\rm res}(\pi(x))^{p^{l-1}u}.
\]

 This implies that the order $p^l$ of ${\rm res}(\pi(x))$  divides $p^{l-1}u$,  a contradiction. 
\end{proof}
\begin{rmk} In this previous theorem, by also using Zassenhaus filtrations we can replace the condition $t\in S^{(l+1)}$ by the (seemingly weaker) condition $t\in S^{(l+1)}\cup S_{(p^{l-1}+1)}$. However we obtain nothing new here  because, by induction on $l$, one can show that
\[
S_{(p^{l-1}+1)}\leq S^{(l+1)}.
\]
(The case $l=2$ was mentioned in \cite[page 260]{MTE}.) 
\end{rmk}

In order to summarize key results in this section we collect Theorem~\ref{thm:l<m}, Theorem~\ref{thm:1}, Theorem~\ref{thm:T} and Theorem~\ref{thm:filtration} into a single theorem as follows.
\begin{mainthm}
Let $F$ be a field satisfying Condition~\ref{cond}. Let $S$ be a free pro-$p$-group on a set of generators $\{x \}\cup \{y_i\mid i \in I\}$ such that
\[
1\longrightarrow R\longrightarrow S\stackrel{\pi}{\longrightarrow} G_F(p) \longrightarrow 1
\]
is a minimal presentation of $G_F(p)$. Let $T$ be the (closed) subgroup of $S$ generated by $\{y_i\}_{i\in I}$. Then there is no relation of the form $r=x^{p^{l}u}s \in R$, where $l$ and $u$ are nonzero integers with  $l\geq 1$, $\gcd(p,u)=1$, and
\begin{enumerate}
\item $s\in [S,S]T$ and $l<m$ if $F$ contains $\zeta_{p^m}$ for some $m\geq 2$;
\item $s\in [S,S]$ such that any commutator of the form $[u,v]$  ($u,v\in X\sqcup X^{-1}$) appearing is a fixed commutator expression for $s$ has $u\not=x^{\pm1}$ and $v\not= x^{\pm1}$;
\item $s\in T$;
\item   $s\in [T,T] (S^{(l+2)}\cap [S,S])$.
\end{enumerate}
\end{mainthm}


\section{Massey products $\langle a,\ldots,a, b\rangle$}

Let $p$ be a prime number and $k$ a positive integer less than $p$. Let $F$ be a field of characteristic $\not=p$ which contains a fixed primitive $p$-th root of unity $\zeta_p$. 
For any element $a$ in $F^\times$, we shall write $\chi_a$  for the  character corresponding to $a$ via the Kummer map $F^\times\to H^1(G_F,\Z/p\Z)={\rm Hom}(G_F,\Z/pZ)$.  Concretely $\chi_a$ is determined by
\[
\frac{\sigma(\sqrt[p]{a})}{\sqrt[p]{a}}=\zeta_p^{\chi_a(\sigma)}, \quad\forall \sigma\in G_F.
\]
The character $\chi_a$ defines a homomorphism $\chi^a\in {\rm Hom}(G_F,\frac{1}p\Z/\Z)\subseteq {\rm Hom}(G_F,\Q/\Z)$ by the formula
\[
\chi^a =\frac{1}{p} \chi_a. 
\]
Let $b$ be any element in $F^\times$.  Then the norm residue symbol can be defined to be
\[
(a,b):= (\chi^a,b):= b\cup \delta \chi^a.
\]

The cup product $\chi_a\cup \chi_b\in H^2(G_F,\Z/p\Z)$ can be interpreted as the norm residue symbol $(a,b)$. More precisely, we consider the exact sequence
\[
0\longrightarrow \Z/p\Z \longrightarrow  F_{sep}^\times \stackrel{x\mapsto x^p}{\longrightarrow} F_{sep}^\times \longrightarrow 1,
\]
where $\Z/p\Z$ has been identified with the group of $p$-th roots of unity $\mu_p$ via the choice of $\zeta_p$. As $H^1(G_F,F_{sep}^\times)=0$, we obtain
\[
0{\longrightarrow} H^2(G_F,\Z/p\Z)\stackrel{i}{\longrightarrow} H^2(G_F,F_{sep}^\times) \stackrel{\times p}{\longrightarrow} H^2(G_F,F_{sep}^\times).
\]
Then one has $i(\chi_a\cup \chi_b)=(a,b)\in H^2(G_F,F_{sep}^\times)$. (See \cite[Chapter XIV, Proposition 5]{Se2}.) 

From now on we assume that $a$ is not in $(F^\times)^p$. The extension $F(\sqrt[p]{a})/F$ is a Galois extension with Galois group $\langle \sigma_a\rangle\simeq \Z/p\Z$, where $\sigma_a$ satisfies $\sigma_a(\sqrt[p]{a})=\zeta_p\sqrt[p]{a}$. 

Assume that $a$ and $b$ are elements in $F^\times$, which are linearly independent modulo $(F^\times)^p$. Let $K= F(\sqrt[p]{a},\sqrt[p]{b})$. Then $K/F$ is a Galois extension whose Galois group is generated by $\sigma_a$ and $\sigma_b$. Here 
\[
\begin{aligned}
\sigma_a(\sqrt[p]{b})=\sqrt[p]{b}, \sigma_a(\sqrt[p]{a})=\zeta_p \sqrt[p]{a};\\ \sigma_b(\sqrt[p]{a})=\sqrt[p]{a}, \sigma_b(\sqrt[p]{b})=\zeta_p \sqrt[p]{b}. 
\end{aligned}
\]
Let $a$ and $b$ be two elements in $F^\times$ which are linearly independent modulo $p$. The extension $F_a=F(\sqrt[p]{a})$ is Galois with Galois group  generated by $\sigma_a$.


Assume that $\chi_a\cup \chi_b=0$. Then the norm residue symbol $(a,b)$ is trivial. Hence there exists $\alpha$ in $F(\sqrt[p]{a})$ such that $N_{F(\sqrt[p]{a})/F}(\alpha)=b$ (see  \cite[Chapter XIV, Proposition 4 (iii)]{Se2}). 
For each $i=0,\ldots,p-1$, we consider the following element
\[
D_i(s):=\sum_{j=0}^{p-i-1} \binom{p-j-1}{i} s^j \in \Z[s].
\]
\begin{lem}
\label{lem:Di} One has 
\[
(s-1)D_i(s)=D_{i-1}(s)-\binom{p}{i}s^0.
\]
\end{lem}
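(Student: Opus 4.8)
The plan is to prove the identity by a direct comparison of the coefficients of each power of $s$ on both sides, the only tool needed being Pascal's rule $\binom{n}{k}=\binom{n-1}{k}+\binom{n-1}{k-1}$. Throughout I keep $1\le i\le p-1$, so that $D_{i-1}$ is defined.

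First I would expand the left-hand side. Writing $(s-1)D_i(s)=sD_i(s)-D_i(s)$ and shifting the index by one in the first sum (replacing $j$ by $j-1$), one gets
\[
(s-1)D_i(s)=\sum_{j=1}^{p-i}\binom{p-j}{i}s^j-\sum_{j=0}^{p-i-1}\binom{p-j-1}{i}s^j.
\]
Then I would read off the coefficient of $s^j$ for each $j$, distinguishing three ranges: the bottom term $j=0$, the generic range $1\le j\le p-i-1$, and the top term $j=p-i$. In the generic range the coefficient is $\binom{p-j}{i}-\binom{p-j-1}{i}$, which collapses to $\binom{p-j-1}{i-1}$ by Pascal's rule; this is precisely the coefficient of $s^j$ in $D_{i-1}(s)$. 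At the top index $j=p-i$ the coefficient is $\binom{i}{i}=1$, which matches the $j=p-i$ term of $D_{i-1}(s)$, namely $\binom{i-1}{i-1}=1$. Hence every term with $j\ge 1$ already agrees with $D_{i-1}(s)$ and contributes nothing to a constant correction.

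The only place that needs care, and the point I expect to be the main obstacle, is the constant term $j=0$: on the left it equals $-\binom{p-1}{i}$, whereas $D_{i-1}(s)$ supplies $\binom{p-1}{i-1}$. The discrepancy must therefore be exactly $-\binom{p}{i}$, and indeed $\binom{p-1}{i-1}-\binom{p}{i}=-\binom{p-1}{i}$, once more by Pascal's rule. Thus the two sides differ precisely by the claimed constant $\binom{p}{i}$, which completes the argument. The whole proof is thus purely combinatorial bookkeeping, with the two boundary indices $j=0$ and $j=p-i$ being the only spots where one must track the shifted summation ranges carefully.
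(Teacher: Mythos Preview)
Your proof is correct and follows essentially the same route as the paper: expand $(s-1)D_i(s)$, shift the index in the $sD_i(s)$ sum, and reduce via Pascal's rule. The only difference is cosmetic---the paper extends both sums to the common range $0\le j\le p-i$ by adding and subtracting the single term $\binom{p}{i}s^0$ (using $\binom{i-1}{i}=0$ at the top end), thereby handling all coefficients at once, whereas you treat the boundary indices $j=0$ and $j=p-i$ separately; the underlying computation is identical.
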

\begin{proof}
One has
\begin{align*}
(s-1)D_i(s)&= \sum_{j=0}^{p-i-1} \binom{p-j-1}{i} s^{j+1} -\sum_{j=0}^{p-i-1} \binom{p-j-1}{i} s^j\\
&=\sum_{j=1}^{p-i} \binom{p-j}{i} s^{j} -\sum_{j=0}^{p-i-1} \binom{p-j-1}{i} s^j\\
&= \sum_{j=0}^{p-i}\left( \binom{p-j}{i}- \binom{p-j-1}{i}\right) s^{j}-\binom{p}{i}s^0\\
&=\sum_{j=0}^{p-i} \binom{p-j-1}{i-1} s^j -\binom{p}{i}s^0\\
&=D_{i-1}(s)-\binom{p}{i}s^0,
\end{align*}

as desired.
\end{proof}

We define $A_i:=D_i(\sigma_a)(\alpha)\in F_a$. Clearly $A_0=D_0(\sigma_a)(\alpha)=N_{F_a/F}(\alpha)=b$.

\begin{cor}
\label{cor:sigma acts}
  One has
\[
\frac{\sigma_a(A_i)}{A_i}=\frac{A_{i-1}}{\alpha^{\binom{p}{i}}}.
\]
\end{cor}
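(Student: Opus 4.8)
The plan is to read the symbol $D_i(\sigma_a)(\alpha)$ as the \emph{multiplicative} action of the group ring $\Z[\langle\sigma_a\rangle]$ on $F_a^\times$, so that the purely formal identity of Lemma~\ref{lem:Di} in $\Z[s]$ transfers verbatim into a relation among the elements $A_i$. Explicitly, for $f(s)=\sum_j c_j s^j\in\Z[s]$ I set $f(\sigma_a)(\alpha):=\prod_j \sigma_a^j(\alpha)^{c_j}\in F_a^\times$. Under this convention $s^0$ acts as the identity, $s$ acts as $\sigma_a$, the element $s-1$ sends $\beta\in F_a^\times$ to $\sigma_a(\beta)/\beta$, and---this is the point---the assignment $f\mapsto f(\sigma_a)(\cdot)$ turns $F_a^\times$ into a module over $\Z[s]$, so that $(fg)(\sigma_a)(\alpha)=f(\sigma_a)\big(g(\sigma_a)(\alpha)\big)$.

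With this dictionary in place the computation is essentially a one-liner. First I would apply $(s-1)D_i(s)$ to $\alpha$: by the module property with $g=D_i$ and $f=s-1$ this equals $(s-1)$ applied to $A_i=D_i(\sigma_a)(\alpha)$, that is $\sigma_a(A_i)/A_i$. Next I would rewrite the same group-ring element using Lemma~\ref{lem:Di} as $D_{i-1}(s)-\binom{p}{i}s^0$; evaluating at $\alpha$ gives $D_{i-1}(\sigma_a)(\alpha)\cdot\alpha^{-\binom{p}{i}}=A_{i-1}/\alpha^{\binom{p}{i}}$, since $s^0$ fixes $\alpha$. Comparing the two evaluations of the one element $(s-1)D_i(s)$ yields
\[
\frac{\sigma_a(A_i)}{A_i}=\frac{A_{i-1}}{\alpha^{\binom{p}{i}}},
\]
which is the assertion.

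The only step needing genuine care---and the nearest thing to an obstacle---is the legitimacy of transporting the additive relation of Lemma~\ref{lem:Di} across to the multiplicative group $F_a^\times$. This is exactly the observation that $F_a^\times$, written multiplicatively, is a $\Z[\langle\sigma_a\rangle]$-module, so that sums of monomials in $\Z[s]$ correspond to products of the associated values and coefficients pass to exponents. I would state this module structure explicitly at the start, after which Lemma~\ref{lem:Di} delivers the corollary with no further calculation.
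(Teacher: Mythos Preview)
Your argument is correct and is essentially the same as the paper's: the paper simply says the corollary follows immediately from Lemma~\ref{lem:Di}, and what you have written is precisely the unpacking of that implication via the $\Z[\langle\sigma_a\rangle]$-module structure on $F_a^\times$. There is nothing to add.
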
  

\begin{proof} 
This follows immediately from Lemma~\ref{lem:Di}.
\end{proof}

The following lemma is elementary. We omit the proof as it is an easy exercise.
\begin{lem}
\label{lem:linear algebra}
  Let $V$ be a vector space over a field  and $N$ a nilpotent operator on $V$. Let $k$ be a nonnegative integer and let $v$ be a vector in $V$ such that $N^kv\not=0$. Then 
\[
\{v,Nv,\cdots,N^kv\}
\]  
is linearly independent.
\end{lem}

For each integer $n\geq 3$, let $\U_n(\Z/p\Z)$ be the group of $n\times n$ upper-triangular unipotent matrices with entries in $\Z/p\Z$.
Let $E_{ij}$  be the $(k+2)\times(k+2)$-matrix such that all entries are zero except for 1's in the position $(i,j)$. We consider the following matrices in $\U_{k+2}(\Z/p\Z)$: 
\[
X=I_{k+2}+E_{1,2}+\cdots+E_{k,k+1} \quad \text{ and } Y=I_{k+2}+E_{k+1,k+2}.
\]
If $x$ and $y$ are elements in a group, we define $[x^{(i)},y]$ by induction as follows:
\[
[x^{(0)},y]=y, \;
[x^{(i)},y]= [x,[x^{(i-1)},y]], \text{ for } i\geq 1.
\]

Let $G$ be the group generated by $x,y$ subject to the relations:
\begin{enumerate}
\item $x^p=y^p=1$, $[x^{(i)},y]^p=1$ for all $i=1,\ldots,k$. 
\item $[[x^{(i)},y],y]=1$, for all $i=1,\ldots,k$ and $[x^{(k+1)},y]=1$.
\end{enumerate}
\begin{lem}
We have $|G|\leq p^{k+2}$.
\end{lem}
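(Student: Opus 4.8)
The plan is to put $y_i := [x^{(i)},y]$ for $0\le i\le k$ (so $y_0=y$ and $y_i=[x,y_{i-1}]$), set $N:=\langle y_0,y_1,\ldots,y_k\rangle$, and show that $N$ is a \emph{normal abelian} subgroup with $G=N\langle x\rangle$. The estimate $|G|\le p^{k+2}$ then falls out of a one-line count. The only step that is not purely formal is the commutativity of $N$.

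First I would record the conjugation formula $xy_ix^{-1}=y_{i+1}y_i$ for $0\le i\le k-1$, which is just a rewriting of $y_{i+1}=[x,y_i]$, together with $xy_kx^{-1}=y_k$ coming from the relation $[x^{(k+1)},y]=y_{k+1}=1$. Thus conjugation by $x$ sends every generator of $N$ into $N$, so it preserves $N$; since $x^p=1$, the inverse automorphism is $x^{p-1}$-conjugation, which also preserves $N$, whence $N$ is normalized by $x$. As $G=\langle x,y_0\rangle$ and $y_0\in N$, this yields $N\triangleleft G$, $G=N\langle x\rangle$, and $|G/N|\le p$.

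The heart of the argument is the claim $[y_i,y_j]=1$ for all $i,j$, which I would prove by induction on $m$, showing that $y_m$ is central in $N$. The base case $m=0$ is exactly the hypothesis $[y_i,y_0]=[[x^{(i)},y],y]=1$ for $1\le i\le k$ (and trivial for $i=0$), so $y_0$ is central. For the inductive step, assuming $y_0,\ldots,y_{m-1}$ are central, I would conjugate the relation $[y_i,y_{m-1}]=1$ by $x$ to get $[y_{i+1}y_i,\,y_my_{m-1}]=1$; since $y_{m-1}$ is central this collapses to $[y_{i+1}y_i,y_m]=1$, and expanding via $[ab,c]={}^a[b,c]\,[a,c]$ gives $[y_{i+1},y_m]=1$ whenever $[y_i,y_m]=1$. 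Running this inner induction on $i$ starting from $[y_0,y_m]=1$ (valid as $y_0$ is central) produces $[y_i,y_m]=1$ for all $0\le i\le k$, so $y_m$ commutes with every generator of $N$ and is therefore central. This double induction is the main obstacle: the whole proof rests on propagating each commutation relation one index at a time through conjugation by $x$, using the already-established centrality to cancel the unwanted factors.

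Finally I would count. Being abelian and generated by the $k+1$ elements $y_0,\ldots,y_k$, each of order dividing $p$ by relation (1), the group $N$ is a quotient of $(\Z/p\Z)^{k+1}$, so $|N|\le p^{k+1}$. Combined with $|G/N|\le p$ from $x^p=1$, this gives $|G|=|N|\,|G/N|\le p^{k+1}\cdot p=p^{k+2}$, as claimed. (Note that $k<p$ is not needed for this upper bound; that hypothesis is what makes the companion realization of $x,y$ as the matrices $X,Y\in\U_{k+2}(\Z/p\Z)$ have order exactly $p^{k+2}$, giving the reverse inequality.)
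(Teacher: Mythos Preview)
Your proof is correct and follows essentially the same approach as the paper: both establish the normal form $g=y_k^{e_k}\cdots y_1^{e_1}y^{e_0}x^{e_{-1}}$ with $e_i\in\{0,\ldots,p-1\}$, the paper by invoking the identity $ab=[a,b]ba$ and the defining relations without further detail, you by explicitly verifying that $N=\langle y_0,\ldots,y_k\rangle$ is a normal abelian subgroup of exponent $p$ with $G=N\langle x\rangle$. Your double induction (outer on $m$, inner on $i$ via conjugation by $x$) is exactly the content the paper's one-line sketch leaves to the reader.
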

\begin{proof} 
By using the identity $ab=[a,b]ba$ and the relations defining $G$, 
we see that every element $g\in G$ can be written in the form
\[
g=[x^{(k)},y]^{e_k} [x^{(k-1)},y]^{e_{k-1}}\cdots [x,y]^{e_1}y^{e_0}x^{e_{-1}}, 
\]
where each $e_i\in \{0,1,\ldots,p-1\}$. Then the lemma follows.
\end{proof}

\begin{lem}
\label{lem:embedding G}
 The subgroup of $\U_{k+2}(\Z/p\Z)$ generated by $X,Y$ is isomorphic to $G$.
\end{lem}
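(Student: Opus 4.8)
The plan is to exhibit a surjective homomorphism $\phi\colon G\twoheadrightarrow \langle X,Y\rangle$ sending $x\mapsto X$, $y\mapsto Y$, and then to prove $|\langle X,Y\rangle|\geq p^{k+2}$. Combined with the bound $|G|\leq p^{k+2}$ from the previous lemma together with the surjectivity of $\phi$, this forces $|G|=|\langle X,Y\rangle|=p^{k+2}$, so $\phi$ is an isomorphism.

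To construct $\phi$ I first verify that $X$ and $Y$ satisfy the defining relations of $G$. Writing $X=I_{k+2}+N$ with $N=E_{1,2}+\cdots+E_{k,k+1}$, the matrix $N$ is nilpotent of index $k+1$, and since $k<p$ each binomial coefficient $\binom{p}{j}$ with $1\leq j\leq k$ vanishes modulo $p$; hence $X^{p}=(I_{k+2}+N)^{p}=I_{k+2}$. Likewise $Y^{p}=I_{k+2}$ because $E_{k+1,k+2}^{2}=0$. The key computation is the conjugation identity
\[
X\,E_{j,k+2}\,X^{-1}=E_{j,k+2}+E_{j-1,k+2}\quad (2\leq j\leq k+1),\qquad X\,E_{1,k+2}\,X^{-1}=E_{1,k+2},
\]
which follows from $N E_{j,k+2}=E_{j-1,k+2}$ and $E_{j,k+2}N=0$. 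A short induction on $i$ then yields
\[
[X^{(i)},Y]=I_{k+2}+E_{k+1-i,\,k+2}\quad (1\leq i\leq k),\qquad [X^{(k+1)},Y]=I_{k+2}.
\]
From these the remaining relations are immediate: each $I_{k+2}+E_{k+1-i,k+2}$ has order $p$, it commutes with $Y=I_{k+2}+E_{k+1,k+2}$ since the relevant products of elementary matrices vanish, so $[[X^{(i)},Y],Y]=I_{k+2}$, and $[X^{(k+1)},Y]=I_{k+2}$. Hence $\phi$ is a well-defined surjection.

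For the lower bound I use the normal form established in the proof of the previous lemma: every element of $\langle X,Y\rangle$ is a product
\[
[X^{(k)},Y]^{e_{k}}\cdots [X,Y]^{e_{1}}\,Y^{e_{0}}\,X^{e_{-1}},\qquad e_{i}\in\{0,\dots,p-1\}.
\]
Since the matrices $I_{k+2}+E_{j,k+2}$ for $1\le j\le k+1$ commute and multiply by adding their off-diagonal entries, the leftmost block equals $I_{k+2}+\sum_{j=1}^{k+1}c_{j}E_{j,k+2}$ with $(c_{1},\dots,c_{k+1})$ ranging freely over $(\Z/p\Z)^{k+1}$. Multiplying on the right by $X^{e_{-1}}$, whose $(k+2)$-nd column coincides with that of the identity, leaves column $k+2$ equal to $(c_{1},\dots,c_{k+1},1)$ read from top to bottom, while the first $k+1$ columns become those of $X^{e_{-1}}$. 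The last column recovers $(c_{1},\dots,c_{k+1})$ and the $(k,k+1)$ entry recovers $e_{-1}\bmod p$, so these $p^{k+2}$ matrices are pairwise distinct. Thus $|\langle X,Y\rangle|\geq p^{k+2}$, and the argument closes as above.

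I expect the main obstacle to be the commutator identities $[X^{(i)},Y]=I_{k+2}+E_{k+1-i,k+2}$: once the conjugation identity is in hand these follow by a routine induction, but establishing that identity correctly, along with the verification of $X^{p}=I_{k+2}$ (which is exactly where the hypothesis $k<p$ is used), is the delicate bookkeeping underpinning both the construction of $\phi$ and the counting argument.
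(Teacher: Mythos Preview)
Your proof is correct and follows the same strategy as the paper: build a surjection $G\twoheadrightarrow\langle X,Y\rangle$ by checking that $X,Y$ satisfy the defining relations (via the commutator identities $[X^{(i)},Y]=I_{k+2}+E_{k+1-i,\,k+2}$), then combine $|G|\le p^{k+2}$ with $|\langle X,Y\rangle|\ge p^{k+2}$ to conclude. The paper leaves the verification of the relations and the lower bound as one-line remarks, whereas you spell out the conjugation identity, the binomial argument for $X^{p}=I_{k+2}$ (this is indeed where $k<p$ enters), and an explicit matrix-entry count showing the $p^{k+2}$ normal-form products are distinct; these elaborations are all sound.
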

\begin{proof} Let $H$ be the subgroup of $\U_{k+2}(\Z/p\Z)$ generated by $X$  and $Y$. By induction, one can show that
\[
[X^{(i)},Y]=I+E_{k+1-i,k+2}, \forall \;0\leq i\leq k.
\] This implies  that $|H|\geq p^{k+2}$. 
Also it is easy to check that $X,Y$ satisfy the relations defining $G$. Hence  we obtain a surjective homomorphism from $G$ to $H$ which sends $x$ to $X$ and $y$ to $Y$. Since $|G|\leq p^{k+2}\leq |H|$, we see that $G$ is isomorphic to $H$. 
\end{proof}
\begin{cor}
\label{cor:LCS}
  One has $G_{k+2}=1$ and $[x^{(k)},y]\not=1$ in $G$.
\end{cor}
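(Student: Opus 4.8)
The plan is to transport both assertions through the isomorphism of Lemma~\ref{lem:embedding G}: identify $G$ with the subgroup $H=\langle X,Y\rangle$ of $\U_{k+2}(\Z/p\Z)$ via $x\mapsto X$, $y\mapsto Y$. Since the descending central series is preserved by group isomorphisms, it suffices to verify the two claims for $H$, where everything becomes a concrete matrix computation.

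For the statement $[x^{(k)},y]\neq 1$ I would simply reuse the identity already established in the proof of Lemma~\ref{lem:embedding G}, namely $[X^{(i)},Y]=I+E_{k+1-i,k+2}$ for $0\leq i\leq k$. Specializing to $i=k$ gives $[X^{(k)},Y]=I+E_{1,k+2}\neq I$, and since $[x^{(k)},y]$ maps to $[X^{(k)},Y]$ under the isomorphism, we conclude $[x^{(k)},y]\neq 1$ in $G$.

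For $G_{k+2}=1$ the key input is the descending central series of the ambient group $\U_{k+2}(\Z/p\Z)$. I would use the standard description $(\U_n(\Z/p\Z))_m=\{I+A: A_{ij}=0 \text{ whenever } j-i<m\}$, which follows from the elementary commutator relations among the unipotent matrices $I+E_{ij}$ together with the inclusion $[(\U_n)_a,(\U_n)_b]\subseteq(\U_n)_{a+b}$. The largest available index distance $j-i$ in an $n\times n$ matrix is $n-1$, so $(\U_n(\Z/p\Z))_{n-1}=\{I+cE_{1,n}\}$ is nontrivial while $(\U_n(\Z/p\Z))_n=1$; hence $\U_n(\Z/p\Z)$ has nilpotency class $n-1$. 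Taking $n=k+2$ yields $(\U_{k+2}(\Z/p\Z))_{k+2}=1$. Because $H\leq \U_{k+2}(\Z/p\Z)$ and the descending central series is functorial under subgroup inclusion, $H_{k+2}\leq(\U_{k+2}(\Z/p\Z))_{k+2}=1$, and therefore $G_{k+2}=H_{k+2}=1$.

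The only substantive point is the description of the lower central series (equivalently, the nilpotency class $k+1$) of the unitriangular group; this is classical and I expect no real obstacle. One could instead argue intrinsically from the presentation: relations (2) make $[x^{(k)},y]$ central in $G$, and a short commutator-collection argument shows that every $(k+2)$-fold commutator in the generators already vanishes, so $G_{k+2}=[G_{k+1},G]=1$. I would nonetheless present the matrix argument, since it is shorter and directly reuses the computation carried out for Lemma~\ref{lem:embedding G}.
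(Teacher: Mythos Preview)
Your proposal is correct and follows essentially the same approach as the paper's own proof: transport both assertions through the embedding of Lemma~\ref{lem:embedding G}, invoke $(\U_{k+2}(\Z/p\Z))_{k+2}=1$ to obtain $G_{k+2}=1$, and read off $[x^{(k)},y]\neq 1$ from the matrix identity $[X^{(k)},Y]=I+E_{1,k+2}$ already established there. The only difference is that you spell out the standard computation of the lower central series of the unitriangular group, whereas the paper simply asserts $\U_{k+2}(\Z/p\Z)_{k+2}=1$ without further comment.
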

\begin{proof} By the proof of Lemma~\ref{lem:embedding G}, one has an injection from $G$ into $\U_{k+2}(\Z/p\Z)_{k+2}$, which maps $x$ to $X$ and $y$ to $Y$. Since $\U_{k+2}(\Z/p\Z)_{k+2}=1$, this implies that $G_{k+2}=1$. Also since $[X^{(k)},Y]=I+E_{1,k+2}\not=1$, we see that $[x^{(k)},y]\not=1$ in $G$.
\end{proof}
The following result is a generalization of \cite[Proposition 3.3]{MT2}. For some related automatic Galois realizations see \cite{MSS} and \cite{Wat}.
\begin{thm}
\label{thm:aab Galois}
Let $a,b$ be elements in $F^\times$ which are linearly independent modulo $(F^\times)^p$ such that $\chi_a\cup\chi_b=0$.
  Let $k$ be an integer with $1\leq k\leq p-1$. The homomorphism 
\[
\bar{\rho}:=(\chi_a,\ldots,\chi_a,\chi_b)\colon G_F\to (\Z/p\Z)^k\times (\Z/p\Z)
\]
 lifts to a homomorphism $\rho\colon G_F \to \U_{k+2}(\Z/p\Z)$.
\end{thm}
\begin{proof}

Let $W^*$ be the $\F_p$-vector space in $F_a^\times/(F_a^\times)^p$ generated by $[A_i]_{F_a}$'s with $i=0,\ldots,k$. Let $L=F_a(\sqrt[p]{W^*})$. From Corollary~\ref{cor:sigma acts} we see that $W^*$ is an $\F_p[\Gal(F_a/F)]$-module. Hence $L/F$ is a Galois extension by Kummer theory.
\\
\\
\noindent {\bf Claim:} $\dim_{\F_p}(W^*)=p^{k+1}$. Hence $[L:F]=p^{k+2}$.\\
{\it Proof of claim}:  From Corollary~\ref{cor:sigma acts}, one has the relation
\[
[(\sigma_a-1)^k(A_k)]_{F_a}=[A_0]_{F_a}=[b]_{F_a}\not=0.
\] 
By Lemma~\ref{lem:linear algebra}, we see that 
\[
\{ [A_k]_{F_a},[A_{k-1}]_{F_a},\ldots,[A_1]_{F_a},[A_0]_{F_a}\}
\] 
is an $\F_p$-basis for $W^*$.
\\
\\
Since $\tilde\sigma_a(A_i)=A_i\dfrac{A_{i-1}}{\alpha^{\binom{p}{i}}}$, and $\tilde\sigma_a(b)=b$ for each extension $\tilde{\sigma}_a$ in ${\rm Gal}(L/F)$ of $\sigma_a$, we see that for each $i$, one has
 $\tilde{\sigma}_a(\sqrt[p]{A_i})=\zeta_p^{\epsilon_i}\sqrt[p]{A_i}\dfrac{\sqrt[p]{A_{i-1}}}{\alpha^{\binom{p}{i}/p}}$,
  and $\tilde{\sigma}_a(\sqrt[p]{b})=\zeta_p^{\epsilon} \sqrt[p]{b}$,  for some $\epsilon_i,\epsilon$ in $\{0,\ldots,p-1\}$. Since $[L:F]=p^{k+2}$, there is an unique extension $\tilde{\sigma}_a$ such that 
  \[
  \tilde{\sigma}_a(\sqrt[p]{A_i})=\sqrt[p]{A_i}\dfrac{\sqrt[p]{A_{i-1}}}{\alpha^{\binom{p}{i}/p}}, \forall i=1,\ldots,k;\quad
\tilde{\sigma}_a(\sqrt[p]{b})=\sqrt[p]{b}.
  \]
  Similarly, there is an extension $\tilde{\sigma}_b\in {\rm Gal}(L/F)$ of $\sigma_a$ such that
  \[
  \tilde{\sigma}_b(\sqrt[p]{A_i})=\sqrt[p]{A_i}, \forall i=1,\ldots,k;\quad \tilde{\sigma}_b(\sqrt[p]b)=\zeta_p \sqrt[p]{b}. 
  \]
  From now on, by abuse of notation, we  omit the tildes.
  We can check that $\sigma_a$ and $\sigma_b$ satisfy the relations defining $G$.
\\
\\
{\bf Claim}: $\sigma_a^p=1$.\\
{\it Proof of Claim}: Clearly $\sigma_a(\sqrt[p]{A_0})=\sigma_a(\sqrt[p]{b})=\sqrt[p]{b}=\sqrt[p]{A_0}$. 

For each $1\leq i\leq p-1$, we prove by induction on $n\geq 1$ the following formula
\[
\begin{split}
\sigma^n_a(\sqrt[p]{A_i})&=\sqrt[p]{A_{i}}^{\binom{n}{0}} \sqrt[p]{A_{i-1}}^{\binom{n}{1}}\cdots \sqrt[p]{A_{0}}^{\binom{n}{i}} 
\\
&\alpha^{-[\binom{n-1}{0}\binom{p}{i}+\cdots+\binom{n-1}{i-1}\binom{p}{1}]/p}
\sigma_a(\alpha^{-[\binom{n-2}{0}\binom{p}{i}+\cdots+\binom{n-2}{i-1}\binom{p}{1}]/p}) \cdots \sigma_a^{n-1}(\alpha^{-\binom{0}{0}\binom{p}{i}/p}),
\end{split}
\]
where $\binom{n}{i}:=0$ if $n<i$. Clearly this formula is true for $n=1$. Now suppose that $n>1$. Then by induction, one has
\[
\begin{aligned}
\sigma^n_a(\sqrt[p]{A_i})&=\sigma_a(\sigma_a^{n-1}(\sqrt[p]{A_{i}}))\\
&=\sigma_a(\sqrt[p]{A_{i}})^{\binom{n-1}{0}} \sigma_a(\sqrt[p]{A_{i-1}})^{\binom{n-1}{1}}\cdots\sigma_a(\sqrt[p]{A_{0}})^{\binom{n-1}{i}} \\
&\sigma_a(\alpha^{-[\binom{n-2}{0}\binom{p}{i}+\cdots+\binom{n-2}{i-1}\binom{p}{1}]/p})
\cdots \sigma_a^{n-1}(\alpha^{-\binom{0}{0}\binom{p}{i}/p})\\
&= \sqrt[p]{A_i}^{\binom{n-1}{0}}\sqrt[p]{A_{i-1}}^{\binom{n-1}{0}}\alpha^{-\binom{n-1}{0}\binom{p}{i}/p}
\sqrt[p]{A_{i-1}}^{\binom{n-1}{1}}\cdots \sqrt[p]{A_{0}}^{\binom{n-1}{i-1}}\alpha^{-\binom{n-1}{i-1}\binom{p}{1}/p} \sqrt[p]{A_{0}}^{\binom{n-1}{i}}\\
&\sigma_a(\alpha)^{-[\binom{n-2}{0}\binom{p}{i}+\cdots+\binom{n-2}{i-1}\binom{p}{1}]/p}\cdots\sigma_a^{n-1}(\alpha)^{-\binom{0}{0}\binom{p}{i}/p}\\
&=\sqrt[p]{A_{i}}^{\binom{n}{0}} \sqrt[p]{A_{i-1}}^{\binom{n}{1}}\cdots \sqrt[p]{A_{0}}^{\binom{n}{i}} 
\\
&\alpha^{-[\binom{n-1}{0}\binom{p}{i}+\cdots+\binom{n-1}{i-1}\binom{p}{1}]/p}
\sigma_a(\alpha^{-[\binom{n-2}{0}\binom{p}{i}+\cdots+\binom{n-2}{i-1}\binom{p}{1}]/p}) \cdots \sigma_a^{n-1}(\alpha^{-\binom{0}{0}\binom{p}{i}/p}),
\end{aligned}
\]
as desired.  Substituting 
$A_{i-\ell}=\alpha^{\binom{p-1}{i-\ell}} \sigma_a(\alpha)^{\binom{p-2}{i-\ell}}\cdots \sigma_a^{p-i+\ell-1}(\alpha)^{\binom{i-\ell}{i-\ell}}$
, one obtains   $\sigma_a^p(\sqrt[p]{A_{i}})=\sqrt[p]{A_i}$. 
\\
\\
{\bf Claim}: $\sigma_b^p=[\sigma_a^{(i)},\sigma_b]^p=[[\sigma_a^{(i)},\sigma_b],\sigma_b]=1$, for $i=1,\ldots,k$, and $[\sigma_a^{(k+1)},\sigma_b]=1$.\\
 {\it Proof of Claim}: We consider the following exact sequence
 \[
 1\to {\rm Gal}(L/F_a)\to {\rm Gal}(L/F) \to {\rm Gal}(F_a/F)\to 1.
 \]   
 By Kummer theory ${\rm Gal}(L/F_a)\simeq W^*$, which can be considered as an $\F_p$-vector space of dimension $p^{k+1}$.  

  Since ${\rm Gal}(F_a/F)$ is abelian, we see that for each $1\leq i\leq k$, $[\sigma_a^{(i)},\sigma_b]$ is in ${\rm Gal}(L/F_a)$. Clearly $\sigma_b$ is in ${\rm Gal}(L/F_a)$.  Hence $[\sigma_a^{(i)},\sigma_b]^p=[[\sigma_a^{(i)},\sigma_b],\sigma_b]=1$. By Lemma 1.3, we see that $[\sigma_a^{(k+1)},\sigma_b]=1$.

We can define a homomorphism $\psi\colon \Gal(L/F)\to \U_{k+1}(\Z/p\Z)$ by letting
\[
\sigma_a \mapsto X \text{ and } \sigma_b\mapsto Y.
\]
The composition $\rho\colon G_F\to \Gal(L/F) \stackrel{\psi}{\to}\U_{k+1}(\Z/p\Z)$ is a desired lifting of $\bar{\rho}$.
\end{proof}
We obtain immediately the following result of Sharifi. More precisely, this result is a special case of \cite[Theorem 4.3]{Sha}.
\begin{cor}[Sharifi] 
Let $a,b$ be elements in $F^\times$ such that $\chi_a\cup\chi_b=0$. Then for every integer $k$ with $1\leq k\leq p-1$, the $k+1$-fold Massey product $\langle \chi_a,\ldots,\chi_a,\chi_b\rangle$ is defined and contains 0.
\end{cor}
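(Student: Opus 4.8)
The plan is to deduce this corollary directly from Theorem~\ref{thm:aab Galois} by invoking the standard dictionary, due to Dwyer, between mod-$p$ Massey products and homomorphisms into unipotent upper-triangular matrices. Recall the dictionary in the form we need: for characters $\chi_1,\ldots,\chi_n\in H^1(G_F,\Z/p\Z)$, the $n$-fold Massey product $\langle\chi_1,\ldots,\chi_n\rangle$ is defined and contains $0$ precisely when the tuple $(\chi_1,\ldots,\chi_n)$, placed along the superdiagonal, extends to a group homomorphism $\rho\colon G_F\to \U_{n+1}(\Z/p\Z)$.

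First I would spell out why the dictionary holds, so as to fix conventions. Writing $\rho=(\rho_{ij})_{i<j}$ with $\rho_{ii}=1$, the requirement that $\rho$ be a homomorphism reads
\[
\rho_{ij}(gh)=\rho_{ij}(g)+\rho_{ij}(h)+\sum_{i<l<j}\rho_{il}(g)\rho_{lj}(h),
\]
which forces each superdiagonal entry $\rho_{i,i+1}$ to be a character and forces $\delta\rho_{ij}=-\sum_{i<l<j}\rho_{il}\cup\rho_{lj}$ for all $i<j$. Thus the family $\{\rho_{ij}\}_{(i,j)\neq(1,n+1)}$ is a defining system for $\langle\chi_1,\ldots,\chi_n\rangle$, and the top-right entry $\rho_{1,n+1}$ exhibits the associated value $\sum_{1<l<n+1}\rho_{1l}\cup\rho_{l,n+1}$ as the coboundary $-\delta\rho_{1,n+1}$, hence as $0$ in $H^2(G_F,\Z/p\Z)$. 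Since we only assert that the product contains $0$, the sign conventions are immaterial.

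Next I would apply Theorem~\ref{thm:aab Galois} to the given pair $a,b$ (whose hypothesis $\chi_a\cup\chi_b=0$ is exactly that of the theorem), obtaining a homomorphism $\rho\colon G_F\to\U_{k+2}(\Z/p\Z)$ lifting $\bar\rho=(\chi_a,\ldots,\chi_a,\chi_b)$. Reading off from the generating matrices $X=I_{k+2}+E_{1,2}+\cdots+E_{k,k+1}$ and $Y=I_{k+2}+E_{k+1,k+2}$, the nonzero superdiagonal entries of $\rho$ sit in positions $(1,2),\ldots,(k,k+1)$, contributing $\chi_a$, and in position $(k+1,k+2)$, contributing $\chi_b$; hence the superdiagonal of $\rho$ is the $(k+1)$-tuple $(\chi_a,\ldots,\chi_a,\chi_b)$ with $k$ copies of $\chi_a$. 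Taking $n=k+1$, so that $\U_{n+1}=\U_{k+2}$, the dictionary of the first paragraph applies verbatim and yields that $\langle\chi_a,\ldots,\chi_a,\chi_b\rangle$ is defined and contains $0$.

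The content here is carried entirely by Theorem~\ref{thm:aab Galois}; the remaining work is only bookkeeping, namely matching the length $k+1$ of the Massey product with the size $k+2$ of the unipotent group and confirming the superdiagonal identification. The one point deserving care is to cite the Massey--unipotent dictionary with a precise reference so that the (here irrelevant) sign in the Massey product convention is unambiguous; no further cohomological input is required.
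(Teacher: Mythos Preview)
Your proposal is correct and follows exactly the paper's own argument: the paper's proof simply cites Theorem~\ref{thm:aab Galois} together with Dwyer's correspondence between $(k+1)$-fold Massey products and homomorphisms to $\U_{k+2}(\F_p)$ (\cite[Theorem~2.4]{Dwy}, \cite{MT2}), and you have merely unpacked that correspondence in more detail. The bookkeeping you supply---matching the superdiagonal of $\rho$ with $(\chi_a,\ldots,\chi_a,\chi_b)$ and the size $k+2$ with the length $k+1$---is accurate and is precisely what is implicit in the paper's one-line proof.
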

For our purposes, the $k+1$-fold Massey product $\langle \chi_a,\ldots,\chi_a,\chi_b\rangle$ is said to be defined and contains 0 if there is a (continuous) homomorphism $\rho\colon G_F\to \U_{k+2}(\F_p)$ such that for every $\sigma\in G_F$, one has
\[
\begin{aligned}
\rho(\sigma)_{i,i+1}&= \chi_a(\sigma), \text{ for } i=1,\ldots,k;\\
\rho(\sigma)_{k+1,k+2}&= \chi_b(\sigma).  
\end{aligned}
\]
(See \cite[Theorem 2.4]{Dwy}, and also \cite{MT2}.)


\begin{thm}
\label{thm:aab restriction}
  Let $F$ be a field containing $\zeta_p$, where $p$ is an odd prime. Let $\sigma,\tau$ be  elements in $G_F(p)$ and let  $\bar{\sigma},\bar{\tau}$ be the images of $\sigma,\tau$   in $G_F(p)/G_F(p)^p[G_F(p),G_F(p)]$. Suppose that  the embedding problem 
\[
\xymatrix{
& & &G_F(p) \ar@{->}[d]^{\varphi} \ar@{-->}[ld]\\
0\ar[r]& \Z/p\Z\ar[r] &\U_3(\Z/p\Z)\ar[r] &\Z/p\Z\times \Z/p\Z\ar[r] &0
}
\]
has a solution. Here $\varphi$ is the composition of the natural projection $$\pi\colon G_F(p)\surj G_F(p)/G_F(p)^p[G_F(p),G_F(p)]$$ with a  homomorphism $\psi\colon  G_F(p)/G_F(p)^p[G_F(p),G_F(p)] \to \Z/p\Z\times \Z/p\Z$ such that $\psi(\bar{\sigma})=(1,0)$ and $\psi(\bar{\tau})=(0,1)$.

Then  $[\sigma^{(i)},\tau]\not=1$ in $G_F(p)$, for each $i=1,2,\ldots,p-1$.
\end{thm}
\begin{proof} 
Let $\varphi_1$ be the composition of $\varphi$ with the projection on the first coordinate $\Z/p\Z\times \Z/p\Z \to \Z/p\Z$, $(u,v)\mapsto u$. Similarly let  $\varphi_2$ be the composition of $\varphi$ with the projection on the second coordinate $\Z/p\Z\times \Z/p\Z \to \Z/p\Z$, $(u,v)\mapsto v$. Clearly $\varphi_1$ and $\varphi_2$ are $\F_p$-linearly independent in ${\rm Hom}(G,\F_p)$. By the assumption on the solvability of the stated embedding problem, we see that $\varphi_1\cup \varphi_2=0$.

Let $[a]$ and $[b]$ be elements in $F^\times/(F^\times)^p$ such that
$\varphi_1=\chi_a$ and $\varphi_2=\chi_b$. Then $a$ and $b$ are $\F_p$-linearly independent in $F^\times/(F^\times)^p$ and $\chi_a\cup\chi_b=0$. Also we have
\[
\begin{aligned}
\sigma(\sqrt[p]{a})=\zeta_p \sqrt[p]{a}; \tau(\sqrt[p]{a})=\sqrt[p]{a};\\
\sigma(\sqrt[p]{b})=\sqrt[p]{b}; \tau(\sqrt[p]{b})=\zeta_p\sqrt[p]{b}.
\end{aligned}
\]

Now we consider the extension $L/F$, where $L=F_a(\sqrt[p]{W^*})$ as in the proof of Theorem~\ref{thm:aab Galois} considered in the case $k=p-1$. Then there exist $\sigma_a$ and $\sigma_b$ in $\Gal(L/F):=H$ such that $H$ is generated by $\sigma_a$ and $\sigma_b$ and the relations in $H$ are the following relations
\[
\sigma_a^p=\sigma_b^p, [\sigma_a^{(i)},\sigma_b]^p=[[\sigma_a^{(i)},\sigma_b],\sigma_b]=1, \forall i=1,\ldots,p-1, \text{ and } [\sigma_a^{(p)},\sigma_b]=1.
\]

On $F(\sqrt[p]{a},\sqrt[p]{b})$, we have that $\sigma$ (respectively, $\tau$) acts in the same way as $\sigma_a$ (respectively, $\sigma_b$). Note also that $\Gal(L/F(\sqrt[p]{a},\sqrt[p]{b}))=\Phi(H)=[H,H]=H_2$. Therefore
\[
\begin{aligned}
\sigma&\equiv \sigma_a \bmod \Phi(H), \text{ i.e.}, \sigma =\sigma_a \gamma,\\
\tau &\equiv \sigma_b \bmod \Phi(H), \text{ i.e.}, \tau =\sigma_b\delta,
\end{aligned}
\] 
for some $\gamma$ and $\delta$ in $\Phi(H)$.  
By induction on  $i$, we shall show that
\[
[\sigma^{(i)},\tau]\equiv [\sigma_a^{(i)},\sigma_b] \bmod H_{i+2}.
\]
Clearly, this statement is true for $i=0$. Now suppose that $i>0$. Then by the induction hypothesis, in $H$ we have  
\[ 
[\sigma^{(i-1)},\tau]= [\sigma_a^{(i-1)},\sigma_b] \epsilon,
\]
for some $\epsilon \in H_{i+1}$. In $H$ we have
\begin{align*}
[\sigma^{(i)},\tau] &= [\sigma, [\sigma^{(i-1)},\sigma]]= [\sigma_a\gamma,  [\sigma_a^{(i-1)},\sigma_b] \epsilon] \\
&= [\sigma_a,[\gamma,[\sigma_a^{(i-1)},\sigma_b] \epsilon]][\gamma, [\sigma_a^{(i-1)},\sigma_b] \epsilon ]  [\sigma_a,[\sigma_a^{(i-1)},\sigma_b] \epsilon]\\
&\equiv [\gamma, [\sigma_a^{(i-1)},\sigma_b] ] [\gamma,\epsilon ] [\sigma_a,[\sigma_a^{(i-1)},\sigma_b]] [\sigma_a,\epsilon]\bmod H_{i+2}\\
&\equiv  [\sigma_a,[\sigma_a^{(i-1)},\sigma_b]] =[\sigma_a^{(i)},\sigma_b] \bmod H_{i+2},
\end{align*}
as desired. 
(One has $[xy,z]=[x,[y,z]] [y,z][x,z]$ and $[x,yz]=[x,y][y,[x,z]][x,z]$.)
 Thus
\[
[\sigma^{(p-1)},\tau]
\equiv [\sigma_a^{(p-1)},\sigma_b] \bmod H_{p+1}.
\]
 By Corollary~\ref{cor:LCS}, $H_{p+1}=1$. This implies that $[\sigma^{(p-1)},\tau]=[\sigma_a^{(p-1)},\sigma_b]$. Again by Corollary~\ref{cor:LCS}, $[\sigma_a^{(p-1)},\sigma_b]\not=1$. Thus $[\sigma^{(p-1)},\tau]\not=1$ and hence $[\sigma^{(i)},\tau]\not=1$ for every $1\leq i\leq p-1$.
\end{proof}

\appendix
\section{The cyclotomic radical $p$-extensions}
Let $F$ be a field satisfying Condition~\ref{cond}. 
We set 
\[CR(F)=F(\sqrt[p^\infty]{F^\times}):=\bigcup F(\sqrt[p^m]{a},\zeta_{p^m}),
\] 
where the union is taken as $m$ runs through the set $\{1,2,\ldots\}$ and $a$ runs over the set $F^\times$. 
The field $CR(F)$ is called the ($p$-)cyclotomic radical extension of $F$. These kinds of extensions were considered also in \cite{CMQ} and \cite{Wa}.

\begin{thm}
\label{thm:CR enough roots} Let $F$ be a field containing $\mu_{p^\infty}$. Let $I$ be a set of cardinality of a basis for $F^\times/(F^\times)^p$ over $\F_p$. Then
\[
\Gal(CR(F)/F)\simeq \langle \tau_i,i\in I\mid  [\tau_i,\tau_j]=1, \forall i,j\in I\rangle\simeq \prod_{i\in I}\Z_p.
\] 
\end{thm}

\begin{proof} Let $G=\Gal(CR(F)/F)$.
We pick a basis $ [a_i], i\in I$ of  the $\F_p$-vector space $F^\times/(F^\times)^p$. Let $\tau_i, i \in I,$ be elements of $G_F(p)$ such that
\[
 \tau_i(\sqrt[p]{a_i})=\zeta_p\sqrt[p]{a_i}, \quad \tau_j(\sqrt[p]{a_i})=\sqrt[p]{a_i},\quad \forall j\not=i.
\]
Then for each $i\in I$, the restriction of $\tau_i$ to $CR(F)$, still denoted by $\tau_i$,   is in $G$. These $\tau_i$ generate $G$. 
\\
\\
{\bf Claim}: $[\tau_i,\tau_j]=1$.
 \\
{\it Proof of Claim:} It is enough to check that for every $m\in \N$, every  $l\in I$, one has $[\tau_i,\tau_j](\sqrt[p^m]{a_l})=\sqrt[p^m]{a_l}$. Clearly, for each $i\in I$, $l\in I$ and $m\in \N$, there exists $\xi_{i,l,m}\in \mu_{p^m}$ such that
\[
\tau_i(\sqrt[p^m]{a_l})= 
_{i,l,m}\sqrt[p^m]{a_l}.
\]
One has
\[
\tau_i\tau_j(\sqrt[p^m]{a_l})=\tau_i(\xi_{j,l,m}\sqrt[p^m]{a_l})=
_{j,l,m} \tau_i(\sqrt[p^m]{a_l})=\xi_{j,l,m}\xi_{i,l,m} \sqrt[p^m]{a_l}= 
\tau_j\tau_i(\sqrt[p^m]{a_l}).
\]
  {\bf Claim:} Let $\tau\in G$ and $a\in F$. If $\tau(\sqrt[p]{a})\not=\sqrt[p]{a}$ then  $\tau^n\not=1$ for all $n\in \N$.
\\
{\it Proof of Claim:} 
Write $n=p^mt$, with $m\in \N\cup\{0\}$, $t\in \Z$ and $(p,t)=1$.
Since $\tau(\sqrt[p]{a})\not=\sqrt[p]{a}$
the restriction of $\tau$ to $F(\sqrt[p]{a})$ generates the entire Galois group $\Gal(F(\sqrt[p]{a})/F)\simeq C_p$. Hence  the
restriction of $\tau$ to $F(\sqrt[p^{m+1}]{a})$ generates the entire Galois group $\Gal(F(\sqrt[p^{m+1}]{a})/F)\simeq C_{p^{m+1}}$. 
Consider the restriction map $\rho \colon \Gal(CR(F)/F)\surj \Gal(F(\sqrt[p^{m+1}]{a})/F)\simeq C_{p^{m+1}}$. Then
\[
 \rho(\tau^{n})=\rho(\tau^{p^mt})=\rho(\tau)^{p^mt}\not=1,
\]
hence $\tau^n\not=1$.
\\
\\
For each finite subset $J$ of $I$, we define  $F_J := \bigcup_{j\in J; m\in \N}F(\sqrt[p^m]{a_j})$ and $G_J=\Gal(F_J/F)$.
\\
\\
{\bf Claim}: $G_J$ is abelian and torsion free.
\\
{\it Proof of Claim:} Let $\tau$ be any nontrivial element in $G_J$. We can write
\[
\tau= (\tau_{j_1}^{\gamma_1}\cdots \tau_{j_k}^{\gamma_k})^{p^s},
\]
where $j_1,\ldots,j_k$ are in $J$, and  $\gamma_1$ is a $p$-adic unit, and $\gamma_2,\ldots, \gamma_k$ are $p$-adic integers. Set $\tilde{\tau}:=\tau_{j_1}^{\gamma_1}\cdots \tau_{j_k}^{\gamma_k}$. Then 
\[
\tilde{\tau}(\sqrt[p]{a_{j_1}})={\tau_{j_1}}^{\gamma_1}(\sqrt[p]{a_{j_1}})= {\zeta_p}^{\gamma_1}\sqrt[p]{a_{j_1}}\not=\sqrt[p]{a_{j_1}}.\]
 By the previous claim, $\tilde{\tau}$ is not a torsion element and hence $\tau$ is not a  torsion element.

From the three claims above and also observing that   $G = \varprojlim G_J$,  we see that  $G$ is a torsion free abelian pro-$p$ group. Hence by \cite[Chapter 4, Section 4.3, Theorem 4.3.4]{RZ}, one has
\[
G= \langle \tau_i,i\in I\mid  [\tau_i,\tau_j]=1, \forall i,j\in I\rangle\simeq \prod_{i\in I}\Z_p.
\qedhere
\]
\end{proof}

  \begin{thm}
  \label{thm:CR}
   Suppose that there exists $k\in \N$ such that $\zeta_{p^k}\in F^\times$ but $\zeta_{p^{k+1}}\not\in F^\times$. Then
\[
\Gal(CR(F)/F)\simeq \langle \sigma,\tau_i,i\in I\mid  [\tau_i,\tau_j]=1, \forall i,j\in I, [\sigma,\tau_i]=\tau_i^{p^k},\forall i\in I\rangle=\left(\prod_{i\in I}\Z_p\right)\rtimes \Z_p,
\]
where $\dim_{\F_p} F^\times/(F^\times)^p=\#I+1$.
\end{thm}

\begin{proof} Let $G=\Gal(CR(F)/F)$.
  We pick a basis $[\zeta_{p^k}], [a_i], i\in I$ of  the $\F_p$-vector space $F^\times/(F^\times)^p$.
For each $i\in I$, set $a_{i,1}:=a_i$, and $K_1:=F(\zeta_{p^{k+1}},\sqrt[p]{a_{i,1}}, i\in I)$. 
  
Then there exist $\sigma$, and $\tau_{i,1}\in \Gal(K_1/F)$, $i\in I$ such that
\[
\begin{aligned}
\sigma(\zeta_{p^{k+1}})&=\zeta_{p^{k+1}}^{1+p^k}, \quad \sigma(\sqrt[p]{a_i})=\sqrt[p]{a_i}, \forall i\in I,\\
\tau_{i,1}(\zeta_{p^{k+1}})&=\zeta_{p^{k+1}},\quad \tau_{i,1}(\sqrt[p]{a_{i,1}})=\zeta_p\sqrt[p]{a_{i,1}}, \quad \tau_{i,1}(\sqrt[p]{a_{j,1}})=\sqrt[p]{a_{j,1}},\forall j\not=i.
\end{aligned}
\]
Clearly 
\[
{\rm ord}(\tau_{i,1})=p, \;\forall i\in I \text{ and } [\tau_{i,1},\tau_{j,1}]=1,\;\forall i,j\in I.
\]
We pick any extension $\tilde{\sigma}\in G_F(p)$ of $\sigma$.

Since $\Gal(K_1/F)$ is of exponent $p$ and $\Gal(F(\zeta_{p^{k+2}})/F)\simeq C_{p^2}$, we see that $\zeta_{p^{k+2}}$ is not in $K_1$.  Thus $F(\zeta_{p^{k+2}})\cap K_1=F(\zeta_{p^{k+1}})$ and we have a natural isomorphism (by restriction)
\[
\Gal(K_1(\zeta_{p^{k+2}})/F(\zeta_{p^{k+1}})) \to \Gal(F(\zeta_{p^{k+2}})/F(\zeta_{p^{k+1}}))\times \Gal(K_1/F(\zeta_{p^{k+1}})).
\]
Therefore there exists  $\tau^\prime_{i,2}\in \Gal(K_1(\zeta_{p^{k+2}})/F)$ such that
\[
\tau^\prime_{i,2}|_{K_1}=\tau_{i,1} \quad \text{  and } \tau^{\prime}_{i,2}(\zeta_{p^{k+2}})=\zeta_{p^{k+2}}.
\]
For each $i\in I$, pick any extension $\tilde{\tau}_{i,2}\in G_F(p)$ of $\tau_{i,2}$.

By   Lemma~\ref{lem:Labute Prop 6}, there exists a crossed homomorphism $D_{i,2}\colon G_F(p)\to \mu_{p^2}$ such that
\[
D_{i,2}(\tilde{\sigma})=1, \; D_{i,2}(\tilde{\tau}_{j,2})=1 (\forall j\not=i) \text{ and } D_{i,2}(\tilde{\tau}_{i,2})=\zeta_{p^2}.
\]
Consider $D_{i,2}$ as a cocycle with values in $F(p)^\times$. Then $D$ is a 1-coboundary by Hilbert's  Theorem 90. Thus there exists $\alpha_{i,2}\in F(p)^\times$ such that   $D_{i,2}(g)=g(\alpha_{i,2})/\alpha_{i,2}$ for all $g\in G_F(p)$. Since $g(\alpha_{i,2})/\alpha_{i,2}\in \mu_{p^2}$ for all $g\in G_F(p)$, we see that $\alpha_{i,2}^{p^2}=:a_{i,2}$ is in $F^\times$. 
Set $\sqrt[p^2]{a_{i,2}}=\alpha$ and set $K_2:=F(\zeta_{p^{k+2}},\sqrt[p^2]{a_{i,2}},i\in I)$. We define $\tau_{i,2}:=\tilde{\tau}_{i,2}|_{K_2}\in\Gal(K_2/F)$. 
Clearly we have
\[
{\tau}_{i,2}(\zeta_{p^{k+2}})=\zeta_{p^{k+2}}, 
{\tau}_{j,2}(\sqrt[p^2]{a_{i,2}})= \sqrt[p^2]{a_{i,2}} \;(\forall i\not=j),\quad 
{\tau}_{i,2}(\sqrt[p^2]{a_{i,2}})=\zeta_{p^2} \sqrt[p^2]{a_{i,2}}.
\]
One also has
\[
{\rm ord}(\tau_{i,2})=p^2, \;\forall i\in I \text{ and } [\tau_{i,2},\tau_{j,2}]=1,\;\forall i,j\in I.
\]
Noting also that $\Gal(K_2/F)$ is of exponent $p^2$ and $\Gal(F(\zeta_{p^{k+3}})/F)\simeq C_{p^2}$, we see that $\zeta_{p^{k+3}}$ is not in $K_2$.  Thus $F(\zeta_{p^{k+3}})\cap K_2=F(\zeta_{p^{k+2}})$ and we have a natural isomorphism (by restriction)
\[
\Gal(K_2(\zeta_{p^{k+3}})/F(\zeta_{p^{k+2}})) \to \Gal(F(\zeta_{p^{k+3}})/F(\zeta_{p^{k+2}}))\times \Gal(K_2/F(\zeta_{p^{k+2}})).
\]

Inductively for each $m=1,2,\ldots,$ we can define 
$a_{i,m}\in F^\times$, $K_m=F(\zeta_{p^{k+m}},\sqrt[p^m]{a_{i,m}},i\in I)$ and $\tau_{i,m} \in\Gal(K_2/F)$ such that
\[
{\tau}_{i,m}(\zeta_{p^{k+m}})=\zeta_{p^{k+m}}, 
{\tau}_{j,m}(\sqrt[p^m]{a_{i,m}})= \sqrt[p^m]{a_{i,m}} \;(\forall i\not=j),\quad {\tau}_{i,m}(\sqrt[p^m]{a_{i,m}})=\zeta_{p^m} \sqrt[p^m]{a_{i,m}}.
\]
Clearly one has 
\[
{\rm ord}(\tau_{i,m})=p^m, \;\forall i\in I \text{ and } [\tau_{i,m},\tau_{j,m}]=1,\;\forall i,j\in I.
\]
One can check that $CR(F) =\bigcup_{m\geq 1}K_m$. 
For each $i\in I$ define $\tau_i\in \Gal(CR(F)/F)$ as follows: if $\alpha\in K_m$ then $\tau_i(\alpha):=\tau_{i,m}(\alpha)$. 
Let $H$ be the closed subgroup of $G$ generated by $\tau_i$, $i\in I$. Then $H$ is a subgroup of $\Gal(CR(F)/F(\mu_{p^\infty}))$. 
The natural map induced by restriction 
\[
H\to \Gal(F(\mu_{p^\infty})(\sqrt[p]{a_i},i\in I)/F(\mu_{p^\infty})) \simeq \prod_{i\in I} C_p,\]
is surjective. The  surjectivity and the isomorphism above follow from the explicit description of the action of $\tau_i$ on $\sqrt[p]{a_j}$. Therefore by Burnside's basis theorem (\cite[Theorem 4.10]{Ko}) and by Theorem~\ref{thm:CR enough roots}, we have
\[
H= \Gal(CR(F)/F(\mu_{p^\infty}))=\langle \tau_i,i\in I\mid  [\tau_i,\tau_j]=1, \forall i,j\in I\rangle\simeq \prod_{i\in I}\Z_p.
\]

Let $\varphi=\chi_{p,cylc}\colon G_F(p)\to \U_p$ be the $p$-cyclotomic character of $F$.  Pick any $\tau\in H$. 
For any $a\in F^\times$, any $m\in \N$ and any $p^m$-root $\sqrt[p^m]{a}$  of $a$, we can write 
\[
\tilde\sigma(\sqrt[p^m]{a})=\xi \sqrt[p^m]{a},\quad
\tau(\sqrt[p^m]{a})=\eta \sqrt[p^m]{a},
\]
for some $\xi, \eta \in \mu_{p^m}$. Then one has
\[
\tilde\sigma\tau(\sqrt[p^m]{a})=\tilde\sigma(\eta\sqrt[p^m]{a})=\tilde\sigma(\eta) \tilde\sigma(\sqrt[p^m]{a})= \eta^{\varphi(\tilde\sigma)}\xi\sqrt[p^m]{a},
\]
and
\[
\tau^{\varphi(\tilde\sigma)}\tilde\sigma(\sqrt[p^m]{a})=\tau^{\varphi}(\xi\sqrt[p^m]{a})=\tau^{\varphi(\tilde\sigma)}(\xi) \tau^{\varphi(\tilde\sigma)}(\sqrt[p^m]{a})=\xi \eta^{\varphi(\tilde\sigma)}\sqrt[p^m]{a}.
\]
Therefore $\tilde\sigma\tau=\tau^{\varphi(\tilde\sigma)}\tilde\sigma$ and $G=H\rtimes \langle \tilde \sigma\rangle\simeq \left(\prod_{i\in I}\Z_p\right)\rtimes \Z_p$.

Now we write $\varphi(\tilde\sigma)=1+p^ku$ with $u\in \Z_p^\times$. 
Let $\log$ and $\exp$ denote the $p$-adic logarithm function and the $p$-adic exponential function respectively. (See \cite[Chapter 5, Section 5]{Neu}.)  For each $n\geq 1$, let $U^{(n)}=1+p^n\Z_p$ the $n$-th higher unit group.
Then by \cite[Proposition 5.5]{Neu}, for $n>\dfrac{1}{p-1}$, the two functions $\exp$ and $\log$ yield two mutually inverse isomorphisms
\begin{equation*}
\xymatrix{
p^n\Z_p \ar@<+2pt>[r]^{\log} & U^{(n)} \ar@<+2pt>[l]^{\exp}.
}
\end{equation*}
In our case, one has $k>\dfrac{1}{p-1}$ by Condition~\ref{cond}. Therefore $\log(1+p^k)$ and $\log(1+p^ku)$ are both in $p^k\Z_p\setminus p^{k+1}\Z_p$. Set $v=\log(1+p^k)/\log(1+p^ku)$ then $v\in \Z_p$ and  $1+p^k=(1+p^ku)^v$. Set $\sigma:=\tilde\sigma^v$. Then
\[
\sigma \tau\sigma^{-1}=\tilde\sigma^v\tau\tilde\sigma^{-v}=\tau^{(1+p^ku)^v}=\tau^{1+p^k}.
\]
 Thus
\[
G\simeq \langle \sigma,\tau_i,i\in I\mid  [\tau_i,\tau_j]=1, \forall i,j\in I, [\sigma,\tau_i]=\tau_i^{p^k},\forall i\in I\rangle=\left(\prod_{i\in I}\Z_p\right)\rtimes \Z_p.
\qedhere
\]
\end{proof}

Let $\varphi=\chi_{p,cylc}\colon G_F(p)\to \U_p$ be the $p$-cyclotomic character of $F$.
  
\begin{cor}
Let the notation be as in the previous theorem. Then $G_{CR(F)}(p)$ is the closed subgroup of $G_F(p)$ generated by $[\sigma,\tau]\tau^{1-\varphi(\sigma)}$ with $\tau\in \ker\varphi$ and $\sigma\in G_F(p)$.
\end{cor}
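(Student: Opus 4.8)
The plan is to identify $G_{CR(F)}(p)$ with the kernel $M$ of the restriction map $q\colon G_F(p)\surj G:=\Gal(CR(F)/F)$, and then to prove that $M$ equals the closed subgroup $N$ generated by the displayed elements. Two preliminaries are needed first. Since $\zeta_p\in F\subseteq F(p)$, every cyclic degree-$p$ extension $L$ of $F(p)$ is Kummer, $L=F(p)(\sqrt[p]{\beta})$ with $\beta$ in a finite $p$-subextension $F'$ of $F(p)/F$; then $F'(\sqrt[p]{\beta})/F'$ is a $p$-extension, hence lies in $F(p)$, forcing $L=F(p)$. Thus $F(p)$ has no nontrivial pro-$p$ extension, and the same argument over $CR(F)$ gives $CR(F)(p)=F(p)$, so $G_{CR(F)}(p)=\Gal(F(p)/CR(F))=M$. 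Second, because $F(\mu_{p^\infty})\subseteq CR(F)$, the character $\varphi$ factors through $G$ as a character $\overline\varphi$ with $\ker\overline\varphi=H:=\Gal(CR(F)/F(\mu_{p^\infty}))$; in the notation of Theorem~\ref{thm:CR} one has $\overline\varphi(\sigma)=1+p^k$, so the relation $[\sigma,\tau_i]=\tau_i^{p^k}$ reads $[\sigma,\tau_i]\tau_i^{1-\overline\varphi(\sigma)}=1$, exhibiting the relators of Theorem~\ref{thm:CR} as instances of the displayed elements.

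For $N\subseteq M$ I would check that each generator dies in $G$. From the relations of $G$ one deduces $g\,h\,g^{-1}=h^{\overline\varphi(g)}$ for every $g\in G$ and every $h\in H=\ker\overline\varphi$ (verify it on the topological generators $\tau_i$ of the abelian group $H$, then extend, using $\overline\varphi(\sigma)=1+p^k$). Given $\tau\in\ker\varphi$ and $\sigma\in G_F(p)$, we have $q(\tau)\in H$ and $\overline\varphi(q(\sigma))=\varphi(\sigma)$, whence
\[
q\bigl([\sigma,\tau]\tau^{1-\varphi(\sigma)}\bigr)=q(\tau)^{\overline\varphi(q(\sigma))-1}\,q(\tau)^{1-\varphi(\sigma)}=1 ,
\]
so $N\subseteq M$. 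The same computation shows $N$ is normal: conjugating a generator by $g\in G_F(p)$ gives $[g\sigma g^{-1},g\tau g^{-1}]\,(g\tau g^{-1})^{1-\varphi(\sigma)}$, and since $\ker\varphi$ is normal and $\varphi(g\sigma g^{-1})=\varphi(\sigma)$, this is again a generator of $N$. Hence the generated subgroup is automatically normal, which is why the corollary can phrase it as an (ordinary) closed subgroup.

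It then remains to prove $M\subseteq N$. Here I would use that the elements $\sigma,\tau_i$ produced in the proof of Theorem~\ref{thm:CR} already form a minimal generating set of $G_F(p)$: under the Kummer pairing $G_F(p)/\Phi(G_F(p))\times F^\times/(F^\times)^p\to\mu_p$ they are dual to the chosen basis $[\zeta_{p^k}],[a_i]$ of $F^\times/(F^\times)^p$, and $q$ carries them to the presented generators of $G$. Consequently $M$ is the normal closure in $G_F(p)$ of the relators $[\tau_i,\tau_j]$ and $[\sigma,\tau_i]\tau_i^{-p^k}$. By the previous paragraph each relator lies in $N$ (for $[\tau_i,\tau_j]$ take $\sigma=\tau_i\in\ker\varphi$), and $N$ is normal, so $M\subseteq N$. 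Together with $N\subseteq M$ this yields $N=M=G_{CR(F)}(p)$, as claimed.

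I expect the main obstacle to be this last step, namely pinning down $M$ as the normal closure of exactly those two families of relators. This rests on the claim that the generators of Theorem~\ref{thm:CR} minimally generate $G_F(p)$ (so that the abstract presentation of $G$ may be read as a presentation of $G_F(p)/M$); in the infinite-rank case this requires the Kummer-duality argument above rather than a naive dimension count. The preliminary identification $G_{CR(F)}(p)=M$, i.e.\ $CR(F)(p)=F(p)$, is standard but should likewise be stated with care, as it is what lets us interpret the group-theoretic kernel as a maximal pro-$p$ Galois group.
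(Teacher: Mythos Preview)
Your argument is correct and follows the same two-inclusion strategy as the paper, pivoting on the presentation of $G$ from Theorem~\ref{thm:CR}: show the generated subgroup $N$ is normal, show $N\subseteq M=\ker q$, and then show $M\subseteq N$ by recognising $M$ as the normal closure of the relators. The one place you diverge is in the verification that $N\subseteq M$. You deduce the conjugation law $ghg^{-1}=h^{\overline\varphi(g)}$ once and for all from the semidirect-product description of $G$, whereas the paper checks by hand that each generator $[\sigma,\tau]\tau^{1-\varphi(\sigma)}$ acts trivially on every finite layer $F(\sqrt[p^m]{a},\zeta_{p^m})$, splitting into the cases $m\le k$ and $m>k$ and using Proposition~\ref{prop:presentation of G(a,m)}. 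Your route is cleaner and makes the role of the cyclotomic character more transparent; the paper's route is more self-contained in that it does not rely on having internalised the group law of $G$. You also make explicit two points the paper glosses over: the identification $CR(F)(p)=F(p)$ (so that $G_{CR(F)}(p)=M$), and the fact that the elements $\sigma,\tau_i$ lift to a generating set of $G_F(p)$ via Kummer duality, which is exactly what the paper needs (but does not justify) when it passes from $\pi^{-1}(H)=\ker\rho$ to $H=\ker\res$.
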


\begin{proof}
 Let $H$ be the closed subgroup of $G_F(p)$ generated by $[\sigma,\tau]\tau^{1-\varphi(\sigma)}$ with $\tau\in \ker\varphi$ and $\sigma\in G_F(p)$. 
Note that for every $\gamma\in G_F(p)$, we have
$
\varphi(\gamma \sigma \gamma^{-1})= \varphi(\sigma).
$
Hence 
\[
\gamma [\sigma,\tau]\tau^{1-\varphi(\sigma)} \gamma^{-1}= [\gamma\sigma\gamma^{-1},\gamma\tau\gamma^{-1}] (\gamma\tau\gamma^{-1})^{1-\varphi(\gamma\sigma\gamma^{-1})}.
\]
Therefore we see that $H$ is  a normal subgroup of $G_F(p)$.

Set $L=CR(F)$. 
  We shall first show that the restriction map ${\rm res}\colon G_F(p)\surj \Gal(L/F)$ takes $H$ to $1$, this means $H\leq \ker (\res)$. To show this it is enough to show that ${\rm res}( [\sigma,\tau]\tau^{1-\varphi(\sigma)})$, where $\tau\in \ker\varphi$ and $\sigma\in G_F(p)$, is the identity on each field extension $F(\sqrt[p^m]{a},\zeta_{p^m})$ of $F$. 
  By abuse of notation, we also use ${\rm res}$ to denote the restriction map $\res\colon G_F(p)\surj \Gal(F(\sqrt[p^m]{a},\zeta_{p^m})/F)$. 
  
  If $m\leq k$ then $F(\sqrt[p^m]{a},\zeta_{p^m})/F$ is a cyclic Galois extension of degree $p^m$ whose Galois group is generated by $\tau_a$ defined by
$
  \tau_a(\sqrt[p^m]{a})=\zeta_{p^m}\sqrt[p^m]{a}.
  $
Then $\res(\tau)=\tau_a^\lambda$, and $\res(\sigma)=\tau_a^\mu$, for some $\lambda,\mu\in \N$. From $\zeta_{p^m}=\sigma(\zeta_{p^m})=\zeta_{p^m}^{\varphi(\sigma)}$, we see that
$
\varphi(\sigma)=1+ p^ml, \text{ for some $l\in \Z_p$}.
$
Hence $\tau_a^{1-\varphi(\sigma)}=\tau_a^{-p^ml}=1$ and
\[
{\rm res}( [\sigma,\tau]\tau^{1-\varphi(\sigma)})=[\res\sigma,\res\tau](\res\tau)^{1-\varphi(\sigma)}=\tau_a^{(1-\varphi(\sigma))\lambda}=1.
\]

Now we assume that $m>k$. We denote $F(a,m)=F(\sqrt[p^m]{a},\zeta_{p^m})$ as in Section 2. 
Let us write $\tau_a$ and $\sigma_a$, instead of $\tau$ and $\sigma$, as the original generators of $G(a,m)$ in Proposition~\ref{prop:presentation of G(a,m)}. Since $\tau\in \ker(\varphi)$, $\tau(\zeta_{p^m})=\zeta^{p^m}$. Hence $\res\tau$ is in $\Gal(F(a,m)/F(\zeta_{p^m}))$ and $\res\tau =\tau_a^\lambda$  for some $\lambda\in \N$. Also $\res\sigma= \sigma_a^\mu\tau_a^\nu$ for some $\mu,\nu\in \N$. 
One has
\[
\zeta_{p^m}^{\varphi(\sigma)}=\sigma(\zeta_{p^m})=(\res\sigma)(\zeta_{p^m})=\sigma_a^\mu(\zeta_{p^m})=\zeta_{p^m}^{(1+p^k)^\mu}.
\]
Hence $\varphi(\sigma)= (1+p^k)^\mu+p^ml$, for some $l\in \Z_p$. Thus
\[
(\res\tau)^{\varphi(\sigma)}=\tau_a^{\lambda[(1+p^k)^\mu+p^ml]}=\tau_a^{\lambda(1+p^k)^\mu}.
\]
On the other hand, from $\sigma_a\tau_a\sigma_a^{-1}=\tau_a^{p^k}$ and by induction on $\mu$ and $\lambda$ we see that
\[
\sigma_a^{\mu}\tau_a^\lambda \sigma_a^{-\mu}=\tau_a^{\lambda(1+p^k)^\mu}.
\]
Hence
\[
[{\rm res}(\sigma),{\rm res}(\tau)]=[\sigma_a^{\mu}\tau_a^{\nu},\tau_a^\lambda]=[\sigma_a^\mu,\tau_a^\lambda]=
\tau_a^{\lambda((1+p^k)^\mu-1)}= {\rm res}(\tau)^{\varphi(\sigma)-1}.
\]
Therefore
\[
{\rm res}( [\sigma,\tau]\tau^{1-\varphi(\sigma)})=[\res\sigma,\res\tau](\res\tau)^{1-\varphi(\sigma)})=1.
\]

 Now let $S$  be the free pro-$p$ group with generators $x,y_i, i\in I$.   Let $\sigma$ and $\rho$ be the elements in $G_F(p)$ defined as in the proof of Theorem~\ref{thm:CR}. In particular $\varphi(\sigma)=1+p^k$.
Let $\pi\colon S\surj G_F(p)$ be the homomorphism such that $\pi(x)=\sigma$ and $\pi(y_i)=\tau_i$ for every $i\in I$. 
 Let $\rho\colon S\to G$ be the composition map $\pi\circ{\res}$.
  Let $\tilde H:= \pi^{-1}(H)$. Then $\tilde{H}\leq \ker\rho$ because $H\leq \ker\res$.
 
Let $H^\prime$ be  the normal closed subgroup of $S$ generated (as a normal subgroup) by $[\tau_i,\tau_j]$ and $[\sigma,\tau_i]\tau_i^{-p^k}$. Clearly $H^\prime\leq \tilde{H}$. On the other hand, by Theorem~\ref{thm:CR}, $\ker \rho=H^\prime$. Thus one has
\[ H^\prime\leq \tilde{H}\leq \ker\rho=H^\prime.\] Therefore $\tilde{H}=\ker\rho$ and hence $H=\ker({\rm res})=G_{CR(F)}(p)$, as desired.
\end{proof}

\begin{prop} 
\label{prop:not in Frattini}
 Let $F$ be a field containing $\zeta_{p^k}$ for some $k\geq 1$. 
Let $\sigma$  be an element in $G_F(p)^{ab}\setminus pG_F(p)^{ab}$. Then if  ${p^s}\sigma=0$  we have $p^s\geq p^k$.  
\end{prop}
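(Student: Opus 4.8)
The plan is to realise $\sigma$ explicitly as a nontrivial Kummer automorphism of order $p^k$ and to read off the order bound from there. First I would lift $\sigma$ to an element $x\in G_F(p)$. Since $\sigma\notin pG_F(p)^{ab}$ and the preimage of $pG_F(p)^{ab}$ under $G_F(p)\surj G_F(p)^{ab}$ is exactly the Frattini subgroup $\Phi(G_F(p))=\overline{G_F(p)^p[G_F(p),G_F(p)]}$, the image of $x$ in $G_F(p)/\Phi(G_F(p))$ is nonzero. By the Burnside basis theorem I may therefore extend $\{x\}$ to a minimal system of generators $\{x\}\cup\{y_i\}_{i\in I}$ of $G_F(p)$.

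Next I would apply Corollary~\ref{cor:existence of a} with $m=k$ (note that $\zeta_p\in F$ since $k\geq 1$) to produce $a\in F^\times$ and a root $\sqrt[p^k]{a}$ with
\[
x(\sqrt[p^k]{a})=\zeta_{p^k}\sqrt[p^k]{a}.
\]
Because $\zeta_{p^k}\in F$, the extension $F(\sqrt[p^k]{a})/F$ is Galois, and Kummer theory furnishes an injection $\Gal(F(\sqrt[p^k]{a})/F)\inj \mu_{p^k}$, $g\mapsto g(\sqrt[p^k]{a})/\sqrt[p^k]{a}$. Under this injection the restriction $\res(x)$ maps to the primitive root $\zeta_{p^k}$, so $\res(x)$ has order exactly $p^k$ and $\Gal(F(\sqrt[p^k]{a})/F)\simeq C_{p^k}$.

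Finally, since $\Gal(F(\sqrt[p^k]{a})/F)$ is abelian, the restriction map $\res\colon G_F(p)\surj \Gal(F(\sqrt[p^k]{a})/F)$ factors through $G_F(p)^{ab}$, inducing $\bar\res\colon G_F(p)^{ab}\to C_{p^k}$ with $\bar\res(\sigma)=\res(x)$ of order $p^k$. Applying $\bar\res$ to the hypothesis $p^s\sigma=0$ gives $p^s\,\bar\res(\sigma)=0$ in $C_{p^k}$, forcing $p^k\mid p^s$, i.e.\ $p^s\geq p^k$.

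The argument is short once the right extension is in hand; the only points requiring care are the identification of the preimage of $pG_F(p)^{ab}$ with the Frattini subgroup (so that $x$ genuinely belongs to a minimal generating set and Corollary~\ref{cor:existence of a} applies) and the fact that $\res$ factors through the abelianization. I expect the main step to be conceptual rather than a genuine obstacle: recognising that a torsion minimal generator is forced, via Corollary~\ref{cor:existence of a}, to act on a suitable $p^k$-th radical as multiplication by the primitive root $\zeta_{p^k}$, which by itself pins down its order from below.
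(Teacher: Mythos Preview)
Your proof is correct and follows the same overall strategy as the paper: produce a cyclic quotient $\Gal(F(\sqrt[p^k]{a})/F)\simeq C_{p^k}$ on which $\sigma$ restricts to a generator, and read off $s\geq k$.

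The only difference is in how the element $a$ is obtained. You invoke the heavier Corollary~\ref{cor:existence of a} (crossed homomorphisms, Hilbert~90) to manufacture $a$ with the action $x(\sqrt[p^k]{a})=\zeta_{p^k}\sqrt[p^k]{a}$ already built in, so that the order of $\res(x)$ is visibly $p^k$. The paper argues more elementarily: from $\sigma\notin pG_F(p)^{ab}$ and ordinary Kummer duality it picks any $a\in F^\times$ with $\sigma(\sqrt[p]{a})\neq\sqrt[p]{a}$; then $a\notin (F^\times)^p$, so $[F(\sqrt[p^k]{a}):F]=p^k$ by \cite[Chapter VI, Theorem 9.1]{Lan}, and since $\res(\sigma)$ is nontrivial on the quotient $\Gal(F(\sqrt[p]{a})/F)\simeq C_p$ it must generate the cyclic group $C_{p^k}$. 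Your route trades the irreducibility check for the machinery behind Corollary~\ref{cor:existence of a}; the paper's route is self-contained and does not need the Burnside basis theorem or the extension of $x$ to a minimal generating set.
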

\begin{proof}
Because $\sigma$ is not in $pG_F^{ab}(p)$,  the Frattini subgroup of $G_F^{ab}(p)$, we see that there exists $a\in F^\times$ such that $\sigma(\sqrt[p]{a})\not=\sqrt[p]{a}$. 
Since $\mu_{p^k}\subseteq F^\times$, we see that $F(\sqrt[p^k]{a})/F$ is a cyclic extension by Kummer theory. By \cite[Chapter VI, Theorem 9.1]{Lan}, we see that $[F(\sqrt[p^k]{a}):F]=p^k$. 
Let ${\rm res}\colon G_F^{ab}(p)\to \Gal(F(\sqrt[p^k]{a})/F)\simeq C_{p^k}$ be the map induced by restriction. Since the restriction of $\sigma$ to $F(\sqrt[p]{a})$ generates the whole Galois group $\Gal(F(\sqrt[p]{a})/F)\simeq C_p$, we see that ${\rm res }(\sigma)$ generates  $\Gal(F(\sqrt[p^k]{a})/F)\simeq C_{p^k}$. This implies that $s\geq k$. 
\end{proof}

\begin{cor}
\label{cor:A5} Let $F$ be a field containing $\zeta_{p^k}$ for some $k\geq 1$. Suppose that $G_F(p)$ is a finitely generated pro-$p$-group. Then 
\[
G_F(p)^{ab} \simeq (\Z_p)^r \times \prod_{i=1}^l \Z/p^{s_i}\Z,
\]
where $r$ and $l$ are in $\N\cup \{0\}$ and $s_i\geq k$. 
Moreover if $\zeta_{p^k}$ is in $F^\times$ for every $k\geq 1$ then $l=0$.
\end{cor}
\begin{proof} 
Since $G_F(p)$ is finitely generated,  
 $G_F(p)^{ab}$ is also finitely generated and by \cite[Theorem 4.2.4]{RZ}, we have
\[
G_F(p)^{ab} \simeq (\Z_p)^r \times \prod_{i=1}^l \Z/p^{s_i}\Z,
\]
where $r,l\geq 0$ and $s_i\geq 1$. 
Then Proposition~\ref{prop:not in Frattini} implies that $s_i\geq k$ for all $i=1,\ldots, l$. If $\zeta_{p^k}$ is in $F^\times$ for every $k\geq 1$ then $l=0$.
\end{proof}

\begin{rmk} Here we provide a few examples of field satisfying the hypothesis of Corollary~\ref{cor:A5}. We will fix a natural number $k\geq 1$.
\begin{enumerate}
\item[(a)] Any local field $F$ containing $\Q_p(\zeta_{p^k})$ satisfies the hypothesis. Indeed, from \cite[Chapter VII, Theorem 7.5.11]{NSW} we see that $G_F(p)$ is finitely generated.
\item[(b)] Let $F= \C((t_1))\cdots ((t_n))$, $n\in \N$, be the field of iterated formal power series over complex numbers. Then $\zeta_{p^l}$ is in $F$ for all $l\in \N$. Further one can show that $F^\times/(F^\times)^p$ has $p^n$ elements. Thus $F$ also satisfies the hypothesis of Corollary~\ref{cor:A5} and $F$ contains $\zeta_{p^l}$ for all $l\in \N$.

We can modify this example to consider $F=\F_q((t_1))\cdots((t_n))$, $n\in \N$, where $q$ is a prime power such that $p^k\mid q-1$. Then $F^\times/(F^\times)^p$ has $p^{n+1}$  elements and $\zeta_{p^k}$ is in $F$.
 
\item[(c)] Finally another family of desired fields $F$ can be obtained by considering first a field $K$ containing $\zeta_{p^k}$. Then choose a finitely generated subgroup $H$ of $G_F(p)$ and set $F$ to be the subfied of $K(p)$ fixed by $H$. 
\end{enumerate}
\end{rmk}

\begin{cor}
 Let $F$ be a field containing $\zeta_{p^k}$ for some $k\geq 1$. Then
 \[
 G_F^{[k]}:= G_F(p)^{ab}/p^kG_F(p)^{ab} \simeq \prod_{I_k} C_{p^k}
 \]
 for some index set $I_k$.
 \end{cor}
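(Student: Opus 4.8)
The plan is to study the profinite abelian group $A := G_F(p)^{ab}$ and its quotient $B := A/p^kA = G_F^{[k]}$. Since $A$ is a profinite abelian $p$-group and the multiplication-by-$p^k$ map is continuous on the compact group $A$, the subgroup $p^kA$ is closed, so $B$ is again a profinite abelian group, now annihilated by $p^k$. First I would record the general structure of such a group: by Pontryagin duality its dual $B^\vee$ is a discrete abelian group of exponent dividing $p^k$, hence of bounded exponent, so by Pr\"ufer's theorem $B^\vee\cong\bigoplus_j C_{p^{s_j}}$ with $1\leq s_j\leq k$; dualizing back yields a topological isomorphism $B\cong\prod_j C_{p^{s_j}}$. (Alternatively one may invoke the structure theory of abelian pro-$p$ groups in \cite[Chapter 4]{RZ}.) It then remains to prove that every $s_j$ equals $k$.

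The key local statement I would establish is that every $\sigma\in B\setminus pB$ has order exactly $p^k$ in $B$. Since $p^kB=0$ the order divides $p^k$, so only the lower bound needs proof. Given such a $\sigma$, lift it to $\tilde\sigma\in A$; because $k\geq 1$ we have $p^kA\subseteq pA$, so the hypothesis $\sigma\notin pB$ forces $\tilde\sigma\in A\setminus pA=G_F(p)^{ab}\setminus pG_F(p)^{ab}$. Now I would reuse the construction in the proof of Proposition~\ref{prop:not in Frattini}: there is $a\in F^\times$ with $\tilde\sigma(\sqrt[p]{a})\neq\sqrt[p]{a}$, and the restriction map $\mathrm{res}\colon G_F(p)^{ab}\to\Gal(F(\sqrt[p^k]{a})/F)\simeq C_{p^k}$ carries $\tilde\sigma$ to a generator. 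Because the target is killed by $p^k$, this map factors through $B=A/p^kA$, giving a continuous homomorphism $B\to C_{p^k}$ that sends $\sigma$ to a generator; hence $\mathrm{ord}_B(\sigma)\geq p^k$, and therefore $\mathrm{ord}_B(\sigma)=p^k$.

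Finally I would combine the two ingredients. If some $s_{j_0}<k$ in the decomposition $B\cong\prod_j C_{p^{s_j}}$, consider a generator $\sigma$ of the corresponding factor $C_{p^{s_{j_0}}}$. The projection $B\to C_{p^{s_{j_0}}}$ maps $pB$ into $p\,C_{p^{s_{j_0}}}$ while sending $\sigma$ to a generator, which does not lie in $p\,C_{p^{s_{j_0}}}$ as $s_{j_0}\geq 1$; hence $\sigma\in B\setminus pB$. Yet $\sigma$ has order $p^{s_{j_0}}<p^k$, contradicting the local statement. Therefore $s_j=k$ for all $j$, and $B\cong\prod_j C_{p^k}=\prod_{I_k}C_{p^k}$, as claimed.

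The main obstacle, and the point I would be most careful about, is the passage from Proposition~\ref{prop:not in Frattini} (which bounds the order of a \emph{finite-order} element of $A$) to a statement about orders in the quotient $B$: a lift $\tilde\sigma$ may well have infinite order in $A$, so one cannot simply quote the order bound. The clean resolution is to notice that the auxiliary surjection onto $C_{p^k}$ built in that proof already factors through $B$, which delivers the order bound directly inside $B$; recognizing this factorization is the crux of the argument.
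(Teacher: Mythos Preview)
Your argument is correct. Both your proof and the paper's rely on Pontryagin duality to decompose $G_F^{[k]}$ as a product of finite cyclic $p$-groups and then on a Kummer-theoretic fact to exclude factors of order $<p^k$, but the two proofs organize this differently. The paper first passes to the dual side: it identifies $G_F^{[k]}$ with $\Gal(L/F)$ for $L=F(\sqrt[p^k]{a}\mid a\in F^\times)$, which by Kummer theory is the Pontryagin dual of $F^\times/(F^\times)^{p^k}$, decomposes that discrete group via Kaplansky, and then uses the field-theoretic embedding property (every cyclic $p^i$-extension with $i<k$ sits inside a cyclic $p^k$-extension, since $\mu_{p^k}\subseteq F$) to kill the small summands. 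You instead stay on the Galois side: you decompose $B=G_F^{[k]}$ directly, and for the exclusion step you recycle the construction from Proposition~\ref{prop:not in Frattini}, observing that the restriction $G_F(p)^{ab}\to\Gal(F(\sqrt[p^k]{a})/F)\simeq C_{p^k}$ factors through $B$ and sends any $\sigma\in B\setminus pB$ to a generator. Your route makes the link to Proposition~\ref{prop:not in Frattini} explicit and avoids spelling out the Kummer duality for $L/F$; the paper's route is a bit more concrete on the arithmetic side and makes the ``why'' (divisibility in $F^\times/(F^\times)^{p^k}$) visible. Either way the substantive input is the same: $\mu_{p^k}\subseteq F$ forces every nontrivial cyclic quotient of $G_F^{[k]}$ coming from a single Kummer generator to have full order $p^k$.
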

 \begin{proof} Let $L= F(\sqrt[p^k]{a}\mid a\in F^\times)$. By Kummer theory, $\Gal(L/F)=G_F^{[k]}$ is the Pontrjagin dual 
 $ (F^\times/(F^\times)^{p^k})^*$  of
 \[
F^\times/(F^\times)^{p^k}\simeq \left(\oplus_{I_1} C_p\right) \oplus \left(\oplus_{I_2} C_{p^2}\right) \oplus \cdots \oplus \left(\oplus_{I_k} C_{p^k}\right)
 \]
 for some index sets $I_1,I_2,\ldots,I_k$ \cite[Theorem 6, p.17]{Kap}. Therefore by \cite[Lemma 2.9.4 and Theorem 2.9.6]{RZ} we have
 \[
 G_F^{[k]}\simeq  (F^\times/(F^\times)^{p^k})^*\simeq 
 \left(\prod_{I_1} C_p\right) \times \left(\prod_{I_2} C_{p^2}\right) \times \cdots \times \left(\prod_{I_k} C_{p^k}\right).
 \]
 Because $\mu_{p^k}\subseteq F^\times$, we see that each cyclic extension $E/F$ of degree $p^i$ with $1\leq i<k$ embeds into a cyclic extension $K/F$ of degree $p^k$. Therefore $I_j=\emptyset$ for all $j=1,\ldots,k-1$ and $G_F^{[k]}\simeq \prod_{I_k} C_{p^k}$.
  \end{proof}
 
\begin{thm} 
 Let $F$ be a field containing $\zeta_{p^k}$ for some $k\geq 1$. Assume that 
 \[
 {\rm Tor}(G_F(p)^{ab})=\{\sigma\in G_F(p)^{ab}\mid \sigma^{k(\sigma)}=1 \text{ for some }k(\sigma)\in \N\}
 \]
 is a closed subgroup of $G_F(p)^{ab}$. Then there exist a set $J$, an integer $l\in \N$ and cardinal numbers $m(i)$, $k\leq i\leq l$ such that
 \[
 G_F(p)^{ab}=\prod_{J}\Z_p \times \prod_{i=1}^l  \prod_{m(i)}\Z/p^{s_i}\Z,
 \]
 where $k\leq s_1<s_2<\cdots<s_l$.
\end{thm}
\begin{proof} By our assumption, we see that $ {\rm Tor}(G_F(p)^{ab})$ is an abelian torsion pro-$p$ group. Thus by \cite[Lemma 4.3.7]{RZ}, there exists $s\in \N$ such that $\sigma^{p^s}=1$ for all $\sigma\in  {\rm Tor}(G_F(p)^{ab})$.
Therefore by Proposition~\ref{prop:not in Frattini}, the exponents of elements in  ${\rm Tor}(G_F(p)^{ab})$ have the form $p^{s_i}$, where
\[
p^k\leq p^{s_1}<p^{s_2}<\cdots<p^{s_l}=p^s.
\]
Let $S=  G_F(p)^{ab}/{\rm Tor}(G_F(p)^{ab})$. Then $S$ is a torsion-free abelian pro-$p$ group. Hence by \cite[Theorem 4.3.4]{RZ}, $S$ is a free abelian pro-$p$-group. Thus 
$G_F(p)^{ab}=S\oplus  {\rm Tor}(G_F(p)^{ab})$.  By considering Pontrjagin's dual of $ {\rm Tor}(G_F(p)^{ab})$ and using the fact that any discrete abelian group of bounded order is a direct sum of cyclic groups (\cite[Theorem 6, page 17]{Kap}) as well as Pontrjagin's duality theorem, we conclude that
\[
 {\rm Tor}(G_F(p)^{ab})=\prod_{k\leq i\leq l}\left( \prod_{m(i)} \Z/p^{s_i}\Z\right).
\]
Finally using the fact that $S\simeq \prod_{J}\Z_p$ for some index set $J$ (\cite[Theorem 4.3.3]{RZ}) we conclude that
\[
G_F(p)^{ab}=\prod_{J}\Z_p \times \prod_{i=1}^l \prod_{m(i)}\Z/p^{s_i}\Z,
\]
as required.
\end{proof}

\end{document}